\newtheorem{theorem}{Theorem}
\newtheorem{corollary}[theorem]{Corollary}
\newtheorem{proposition}[theorem]{Proposition}
\newtheorem{lemma}[theorem]{Lemma}
\theoremstyle{definition}
\newtheorem{remark}{Remark}
\newtheorem{definition}{Definition}
\numberwithin{theorem}{section}
\numberwithin{remark}{section}
\numberwithin{equation}{section}
\newcommand{\R}{\mathbb{R}}
\newcommand{\N}{\mathbb{N}}
\newcommand{\pa}{\partial}
\newcommand{\C}{{\mathbb C}}
\newcommand{\cD}{{\mathcal D}}
\newcommand{\cP}{{\mathcal P}}
\newcommand{\cS}{{\mathcal S}}
\newcommand{\dist}{{\rm dist}}
\newcommand{\weak}{\rightharpoonup}
\newcommand{\eps}{\varepsilon}
\DeclareMathOperator{\rad}{rad}
\title[Normalized ground states for the combined Sobolev critical NLSE]{Normalized ground states for the NLS equation with combined nonlinearities: the Sobolev critical case}
\author[N. Soave]{Nicola Soave}\thanks{}
\address{Nicola Soave \newline \indent
Dipartimento di Matematica,  Politecnico di Milano,  \newline \indent
Via Edoardo Bonardi 9, 20133 Milano, Italy}
\email{nicola.soave@gmail.com; nicola.soave@polimi.it}
\keywords{Nonlinear Schr\"odinger equation; ground states; combined nonlinearities; normalized solutions; Pohozaev manifold; critical exponent; Brezis-Nirenberg problem.}
\subjclass[2010]{35Q55; 35J20}
\thanks{The author is partially supported by the ERC Advanced Grant 2013 n. 339958 ``Complex Patterns for Strongly Interacting Dynamical Systems - COMPAT'', and by the PRIN-2015KB9WPT\texttt{\char`_}010 Grant: ``Variational methods, with applications to problems in mathematical physics and geometry". The author is a member of the INDAM-GNAMPA group.}
\begin{document}

\begin{abstract}
We study existence and properties of ground states for the nonlinear Schr\"odinger equation with combined power nonlinearities
\[
-\Delta u= \lambda u + \mu |u|^{q-2} u + |u|^{2^*-2} u \qquad \text{in $\R^N$, $N \ge 3$,}
\]
having prescribed mass
\[
\int_{\R^N} |u|^2 = a^2,
\]
in the \emph{Sobolev critical case}. For a $L^2$-subcritical, $L^2$-critical, of $L^2$-supercritical perturbation $\mu |u|^{q-2} u$ we prove several existence/non-existence and stability/instability results. 

This study can be considered as a counterpart of the Brezis-Nirenberg problem in the context of normalized solutions, and seems to be the first contribution regarding existence of normalized ground states for the Sobolev critical NLSE in the whole space $\R^N$.
\end{abstract}

\maketitle

\section{Introduction}

In this paper we study existence and properties of ground states with prescribed mass for the nonlinear Schr\"odinger equation with combined power nonlinearities
\begin{equation}\label{com nls}
i \psi_t + \Delta \psi + \mu |\psi|^{q-2} \psi + |\psi|^{p-2} \psi = 0 \qquad \text{in $\R^N$},
\end{equation}
in the case when $N \ge 3$ and 
\[
p=2^* = \frac{2N}{N-2}
\]
is the critical exponent for the Sobolev embedding $H^1(\R^N) \hookrightarrow L^p(\R^N)$. The NLS equation with combined nonlinearities attracted much attention in the last decade, starting from the fundamental contribution by T. Tao, M. Visan and X. Zhang \cite{TaoVisZha}. According to \cite[Section 3]{TaoVisZha} (based on  \cite{Caz, CazWei}), the Cauchy problem for \eqref{com nls} is locally well posed, and the unique strong solution in $C((T_{\min}, T_{\max}), H^1(\R^N))$ has conservation of \emph{energy} 
\begin{equation}\label{def E}
E_\mu: H^1(\R^N, \C) \to \R, \quad E_\mu(u) = \int_{\R^N} \left( \frac{1}{2} |\nabla u|^2 -\frac{1}{2^*} |u|^{2^*} - \frac{\mu}{q} |u|^q \right)
\end{equation}
and of \emph{mass} 
\[
|u|_2^2 := \int_{\R^N} |u|^2
\]
(here an in the rest of the paper, $|\cdot|_2$ denotes the standard norm in $L^2(\R^N,\C)$). Global well-posedness, scattering, the occurrence of blow-up and more in general dynamical properties has been studied in \cite{TaoVisZha} and many other papers. In particular, the Sobolev critical case has been considered in \cite{AkaIbrKikNaw, AkaIbrKikNaw2, AkaIbrKikNaw3, ChMiZh, KiOhPoVi, MiXuZh, MiaZhaZhe} (see also the references therein). In this paper we continue the study initiated in \cite{So} concerning existence and properties of ground states with prescribed mass for the NLS equation with combined nonlinearities; while in \cite{So} we addressed the Sobolev subcritical case, here we focus on the Sobolev critical one. 

To find stationary states, one makes the ansatz $\psi(t,x) = e^{-i \lambda t} u(x)$, where $\lambda \in \R$ and $u: \R^N \to \C$ is a time-independent function. This ansatz yields
\begin{equation}\label{stat com}
-\Delta u  = \lambda u + |u|^{2^*-2} u + \mu |u|^{q-2} u \qquad \text{in $\R^N$}.
\end{equation}
A possible choice is then to fix $\lambda \in \R$, and to search for solutions to \eqref{stat com} as critical points of the \emph{action functional}
\[
\mathcal{A}(u):= \int_{\R^N} \left(\frac{1}{2} |\nabla u|^2 -\frac{\lambda}2 |u|^2 - \frac{\mu}{q} |u|^q - \frac{1}{2^*} |u|^{2^*}\right);
\]
in this case particular attention is devoted to \emph{least action solutions}, namely solutions minimizing $\mathcal{A}$ among all non-trivial solutions. In this direction, we refer to \cite{AlSoMo, FerGaz}, where the existence of positive decaying real solutions for elliptic equations in $\R^N$ is addressed in a very general setting; to \cite{AkaIbrKikNaw, AkaIbrKikNaw2}, which concern the case when $q>2+4/N$ and $\mu>0$; to \cite{ChMiZh, MiXuZh}, where the variational framework for the fixed $\lambda$ problem is analyzed in the $L^2$-critical $q=2+4/N$ case with $\mu<0$; to \cite{KiOhPoVi}, where the authors study the focusing-cubic defocusing-quintic NLS in $\R^3$.

Alternatively, one can search for solutions to \eqref{stat com} having prescribed mass, and in this case $\lambda \in \R$ is part of the unknown. This approach seems to be particularly meaningful from the physical point of view, and often offers a good insight of the dynamical properties of the stationary solutions for \eqref{com nls}, such as stability or instability \cite{BerCaz,CazLio}. Here we focus on this second approach, which was, up to now, unexplored. 

The existence of normalized stationary states can be formulated as follows: given $a>0$, $\mu \in \R$, and $2<q<2^*$, we aim to find $(\lambda,u) \in \R \times H^1(\R^N, \C)$ solving \eqref{stat com} together with the normalization condition 
\begin{equation}\label{norm}
|u|_2^2 = \int_{\R^N} |u|^2 = a^2.
\end{equation} 
Solutions can be obtained as critical points of the energy functional $E_\mu$ (defined in \eqref{def E}) under the constraint
\[
u \in S_a:= \left\{u \in H^1(\R^N,\C): \int_{\R^N} |u|^2 = a^2 \right\}.
\]
It is standard that $E_\mu$ is of class $C^1$ in $H^1(\R^N,\C)$, and any critical point $u$ of $E_\mu|_{S_a}$ corresponds to a solution to \eqref{stat com} satisfying \eqref{norm}, with the parameter $\lambda \in \R$ appearing as Lagrange multiplier. We will be particularly interested in ground state solutions, defined in the following way:

\begin{definition}
We write that $\tilde u$ is a \emph{ground state} of \eqref{stat com} on $S_a$ if it is a solution to \eqref{stat com} having minimal energy among all the solutions which belongs to $S_a$: 
\[
d E_\mu|_{S_a}(\tilde u) = 0 \quad \text{and} \quad E_\mu(\tilde u) = \inf\{E_\mu(u): \  d E_\mu|_{S_a}(u) = 0, \quad \text{and} \quad u \in S_a\}.
\]
The set of the ground states will be denoted by $Z_{a,\mu}$.
\end{definition}
This definition seems particularly suited in our context, since $E_\mu$ is unbounded from below on $S_a$, and hence global minima do not exist. We also recall the notion of stability and instability we will be interested in:

\begin{definition}
$Z_{a,\mu}$ is \emph{orbitally stable} if for every $\eps>0$ there exists $\delta>0$ such that, for any $\psi_0 \in H$ with $\inf_{v \in Z_{a,\mu}} \|\psi_0 - v\|_H < \delta$, we have
\[
\inf_{v \in Z_{a,\mu}} \|\psi(t,\cdot) - v\|_H < \eps \qquad \forall t>0,
\]
where $\psi(t, \cdot)$ denotes the solution to \eqref{com nls} with initial datum $\psi_0$. \\
A standing wave $e^{i \lambda t} u$ is \emph{strongly unstable} if for every $\eps>0$ there exists $\psi_0 \in H^1(\R^N,\C)$ such that $\|u-\psi_0\|_H <\eps$, and $\psi(t, \cdot)$ blows-up in finite time.
\end{definition}
We observe that the definition of stability implicitly requires that \eqref{com nls} has a unique global solution, at least for initial data $\psi_0$ sufficiently close to $Z_{a,\mu}$. 

\medskip

The search for normalized ground states, or more in general normalized solutions, for nonlinear Schr\"odinger equations such as
\[
-\Delta u = \lambda u + \mu |u|^{q-2} u + |u|^{p-2} u \quad \text{in $\R^N$}, \qquad \int_{\R^N} |u|^2 = a^2
\]
is a challenging and interesting problem (apart from the homogeneous case $p=q$, which can be reduced to the fixed-$\lambda$-problem by scaling). The presence of the $L^2$-constraint makes several methods developed to deal with unconstrained variational problems unavailable, and new phenomena arise. One for all, a new critical exponent appears, the \emph{$L^2$-critical exponent} 
\[
\bar p: = 2+ 4/N.
\] 
This is the threshold exponent for many dynamical properties such as global existence vs. blow-up, and the stability or instability of ground states. From the variational point of view, if the problem is purely $L^2$-subcritical, i.e. $2<q<p<\bar p$, then $E_\mu$ is bounded from below on $S_a$. Thus, for every $a, \mu >0$ a ground state can be found as global minimizers of $E_\mu|_{S_a}$, see \cite{Stu1} or \cite{Lions2,Shi}. Moreover, the set of ground states is orbitally stable \cite{CazLio, Shi}. In the purely $L^2$-supercritical case, i.e. $\bar p<q<p<2^*$, on the contrary, $E_\mu|_{S_a}$ is unbounded from below; however, exploiting the mountain pass lemma and a smart compactness argument, L. Jeanjean \cite{Jea} could show that a normalized ground state does exist for every $a, \mu>0$ also in this case. The associated standing wave is strongly unstable \cite{BerCaz, LeCoz}, due to the supercritical character of the equation. We point out that, in \cite{Jea, LeCoz, Lions2, Shi, Stu1}, more general nonlinearities are considered. 

In \cite{So} we studied what happens when the combined power nonlinearities in \eqref{com nls} are of mixed type, that is
\[
2<q \le 2 +\frac{4}{N} \le p<2^*, \quad \text{with $p \neq q$ and $\mu \in \R$}.
\]
We saw that the interplay between subcritical, critical and supercritical nonlinearities strongly affects the geometry of the functional and the existence and properties of ground states. Here we continue the analysis of mixed problems, focusing on the choice $p=2^*$, and allowing $q$ to be $L^2$-subcritical, $L^2$-critical, or $L^2$-supercritical. The whole study can be considered as a counterpart of the Brezis-Nirenberg problem in the context of normalized solutions: we have a homogeneous problem for which the structure of the ground states is known, and we analyze how the introduction of a lower order term modifies this structure. In this perspective, we think that it is natural to treat the coefficient $\mu$ in front of $|u|^{q-2} u$ as a parameter, fixing the coefficient of $|u|^{2^*-2} u$ in \eqref{stat com} to be $1$. Notice however that, by scaling, it is possible to reverse this choice when $\mu>0$. Moreover, since the coefficient of the $|u|^{2^*-2}u$ is positive, we point out that we always consider a focusing ``leading" nonlinearity. 

\medskip 

Since the exponent $2^*$ is $L^2$-supercritical, the functional $E_\mu$ is always unbounded from below on $S_a$. For quite a long time the paper \cite{Jea} was the only one dealing with existence of normalized solutions in cases when the energy is unbounded from below on the $L^2$-constraint. One of the main difficulties that one has to face in such context is the analysis of the convergence of constrained Palais-Smale sequences: indeed, even in a Sobolev subcritical framework, the mere boundedness of a Palais-Smale sequence is not guaranteed in general; sequences of approximated Lagrange multipliers have to be controlled (since $\lambda$ is not prescribed); and moreover, weak limits of Palais-Smale sequences could leave the constraint, since the embeddings $H^1(\R^N) \hookrightarrow L^2(\R^N)$ and also $H^1_{\rad}(\R^N) \hookrightarrow L^2(\R^N)$ are not compact. In \cite{Jea}, L. Jeanjean could overcome these obstructions showing that the mountain pass geometry of $E_\mu|_{S_a}$ allows to construct a Palais-Smale sequence of functions satisfying the Pohozaev identity. This gives boundedness, which is the first step in proving strong $H^1$-convergence.  

More recently, this kind of idea has been exploited and further developed in other contexts, and more in general the search for normalized solutions in cases when the energy is unbounded from below on the $L^2$-constraint attracted much attention: we refer to \cite{AcWe, BaDeV, BeJe, BeJeLu, BoCaGoJe, BuEsSe, JeLuWa} for normalized solutions to scalar equations in the whole space $\R^N$, to \cite{BaSo, BaSo2, BaJe, BaJeSo, GoJe} for normalized solutions to systems in $\R^N$, and to \cite{FibMer, NoTaVe1, NoTaVe2, NoTaVe3, PiVe} for normalized solutions to equations or systems in bounded domains. We mention that in all the aforementioned references, with the exception of \cite{NoTaVe3}, Sobolev subcritical problems are considered. In particular, the present paper seems to be the first result for normalized solutions of a Sobolev critical problem in the whole space $\R^N$. As naturally expected, the presence of the Sobolev critical term in \eqref{stat com} further complicates the study of the convergence of Palais-Smale sequences. One of the most relevant aspects of our study consists in showing that, suitably combining some of the main ideas from \cite{BreNir} and \cite{Jea}, compactness can be restored also in the present setting.

\subsection{Main results} Let 
\begin{equation}\label{def gamma_p}
\gamma_p = \frac{N(p-2)}{2p}, \qquad \forall p \in (2,2^*].
\end{equation}
We summarize our main existence result in the following statement:



\begin{theorem}\label{thm: main}
Let $N \ge 3$, $2<q<2^*$, and let $a,\mu>0$. There exists a constant $\alpha=\alpha(N,q)>0$ such that, if 
\begin{equation}\label{hp}
\mu a^{(1-\gamma_q)q} < \alpha,
\end{equation}
then $E_\mu|_{S_a}$ has a ground state $\tilde u$ with the following properties: $\tilde u$ is a real valued, positive, radially symmetric function, and solves \eqref{stat com} for some $\tilde \lambda<0$. Moreover, denoting by $m(a,\mu) = E_\mu(\tilde u)$, we have that:
\begin{itemize}
\item[1)] If $2<q<2+4/N$, then $m(a,\mu)<0$, and $\tilde u$ an interior local minimizer of $E_\mu$ on the set
\[
A_k:= \left\{ u \in S_a: \|\nabla u\|_{L^2(\R^N)}< k \right\},
\]
for a suitable $k>0$ small enough. Any other ground state of $E_\mu$ on $S_a$ is a local minimizer of $E_\mu$ on $A_k$. Moreover, $\tilde u$ is radially decreasing.
\item[2)] If $2+4/N \le q<2^*$, then $0<m(a,\mu)< \cS^\frac{N}2/N$, and $\tilde u$ is a critical point of mountain pass type.
\end{itemize}
\end{theorem}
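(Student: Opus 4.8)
This is a plan, not a full proof. The plan is to treat separately the case of an $L^2$-subcritical perturbation ($2<q<2+4/N$), where the ground state will be a constrained local minimizer, and that of an $L^2$-critical or supercritical perturbation ($2+4/N\le q<2^*$), where it will be of mountain-pass type; in both cases the scheme blends Jeanjean's Pohozaev-based compactness strategy with a Brezis--Nirenberg-type energy estimate. Throughout, set $\gamma_q=\frac{N(q-2)}{2q}\in(0,1)$ (so $\gamma_{2^*}=1$) and, for $u\in S_a$ and $s\in\R$, write $(s\star u)(x):=e^{Ns/2}u(e^sx)\in S_a$ for the mass-preserving dilation, along which
\[
E_\mu(s\star u)=\frac{e^{2s}}{2}\|\nabla u\|_2^2-\frac{\mu e^{q\gamma_q s}}{q}\|u\|_q^q-\frac{e^{2^* s}}{2^*}\|u\|_{2^*}^{2^*}.
\]
I will use repeatedly: (a) the lower bound $E_\mu(u)\ge h(\|\nabla u\|_2)$ on $S_a$, with $h(t)=\frac12 t^2-\frac{C_{N,q}\mu a^{q(1-\gamma_q)}}{q}t^{q\gamma_q}-\frac{\cS^{-2^*/2}}{2^*}t^{2^*}$, obtained from Gagliardo--Nirenberg and Sobolev — the constant $\alpha(N,q)$ of \eqref{hp} being fixed precisely so that under \eqref{hp} the function $h$ has the shape needed below; (b) the Pohozaev identity $P_\mu(u)=0$, $P_\mu(u):=\|\nabla u\|_2^2-\gamma_q\mu\|u\|_q^q-\|u\|_{2^*}^{2^*}=\partial_sE_\mu(s\star u)|_{s=0}$, valid for every solution of \eqref{stat com}, which together with the equation tested against $u$ yields $\lambda a^2=-\mu(1-\gamma_q)\|u\|_q^q<0$ (so every normalized solution has $\lambda<0$) and, on $\{P_\mu=0\}$, $E_\mu(u)=\frac1N\|u\|_{2^*}^{2^*}+\mu(\frac{\gamma_q}{2}-\frac1q)\|u\|_q^q$; (c) Schwarz symmetrization, to reduce minimizing/Palais--Smale sequences to radial ones and so recover the compact embedding $H^1_{\rad}(\R^N)\embed L^q(\R^N)$ for $2<q<2^*$.

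\noindent\textbf{Case $2<q<2+4/N$.} Here $q\gamma_q<2<2^*$, and under \eqref{hp} the function $h$ is negative near $0$, has a strictly positive maximum on some $(R_0,R_1)$, and tends to $-\infty$; fix $k\in(R_0,R_1)$ with $h(k)>0$ and $k$ small enough that $\frac12 t^2-\frac{\cS^{-2^*/2}}{2^*}t^{2^*}\ge0$ on $[0,k]$. First show $m(a,\mu):=\inf_{A_k}E_\mu\in(-\infty,0)$ — finiteness from $E_\mu\ge h$, negativity because $E_\mu(s\star u)\to0^-$ as $s\to-\infty$, so $s\star u\in A_k$ has negative energy for $s\ll0$. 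Since a minimizing sequence eventually has $E_\mu(u_n)<0$, the bound $E_\mu\ge h$ and $h>0$ on $(R_0,R_1)$ force $\|\nabla u_n\|_2\le R_0$, so $(u_n)$ stays well inside $A_k$, away from $\{\|\nabla u\|_2=k\}$ where $E_\mu\ge h(k)>0$. Symmetrizing and passing to a weak limit, $u_n\weak\bar u$ with $u_n\to\bar u$ in $L^q$; then $\bar u\ne0$ (otherwise $\|u_n\|_q\to0$ and $E_\mu(u_n)\ge\frac12\|\nabla u_n\|_2^2-\frac{\cS^{-2^*/2}}{2^*}\|\nabla u_n\|_2^{2^*}+o(1)\ge0$, contradicting $m(a,\mu)<0$). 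With $b=\|\bar u\|_2\in(0,a]$ and $v_n=u_n-\bar u$, Brezis--Lieb gives $E_\mu(u_n)=E_\mu(\bar u)+[\frac12\|\nabla v_n\|_2^2-\frac1{2^*}\|v_n\|_{2^*}^{2^*}]+o(1)$ with the bracket $\ge0$ (Sobolev and the smallness of $\|\nabla v_n\|_2$), hence $m(a,\mu)\ge E_\mu(\bar u)\ge\inf_{\{\|\nabla u\|_2\le k\}\cap S_b}E_\mu=:m(b,\mu)$. A scaling argument — testing $m(a,\mu)$ with $v=\frac ab(s\star w)$, $e^s=\frac ba$, for near-optimal $w\in S_b$: because $\gamma_{2^*}=1$ this leaves $\|\nabla v\|_2$ and $\|v\|_{2^*}$ unchanged while strictly lowering the $L^q$-term — shows $\beta\mapsto m(\beta,\mu)$ is strictly decreasing, ruling out $b<a$. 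Thus $b=a$, $u_n\to\bar u$ in $L^2$ and then in $H^1$; $\tilde u:=\bar u$ is a constrained minimizer, radial and radially decreasing (symmetrization), positive (strong maximum principle), solving \eqref{stat com} with $\tilde\lambda<0$, and it is a ground state because any normalized solution $v$ lies on $\{P_\mu=0\}$ and the fibre analysis forces it either to the local minimum of its fibre (where $\|\nabla v\|_2\le R_0$, so $v\in A_k$ and $E_\mu(v)\ge m(a,\mu)$) or to the local maximum (where $E_\mu(v)>0>m(a,\mu)$).

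\noindent\textbf{Case $2+4/N\le q<2^*$.} Now $2\le q\gamma_q<2^*$, each fibre has a unique maximum lying on the Pohozaev manifold $\mathcal P_a:=\{u\in S_a:P_\mu(u)=0\}$, and I take $m(a,\mu):=\inf_{\mathcal P_a}E_\mu=\inf_{u\in S_a}\max_{s\in\R}E_\mu(s\star u)$ ($\mathcal P_a\ne\emptyset$ by \eqref{hp}, needed only for $q=2+4/N$). From the identity $E_\mu|_{\mathcal P_a}=\frac1N\|u\|_{2^*}^{2^*}+\mu\frac{N(q-2)-4}{4q}\|u\|_q^q$ and $q\ge2+4/N$ one gets $m(a,\mu)>0$. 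The crucial estimate is $m(a,\mu)<\cS^{N/2}/N$: evaluate $\max_sE_\mu(s\star w_\eps)$ for $w_\eps$ an $L^2$-normalized truncation of the Aubin--Talenti extremal concentrating at a point, and show the negative term $-\frac\mu q\|w_\eps\|_q^q$ beats the truncation errors for $\eps$ small — this is the Brezis--Nirenberg computation, where \eqref{hp} and the range of $q$ (with extra care for $N=3$) enter. Following Jeanjean, $(s,u)\mapsto E_\mu(s\star u)$ on $\R\times S_a$ has a mountain-pass geometry at level $m(a,\mu)$, and a deformation/Ekeland argument on this augmented space yields a Palais--Smale sequence $(s_n,u_n)$ with $P_\mu(s_n\star u_n)=\partial_sE_\mu(s_n\star u_n)\to0$ and $(s_n)$ bounded; replacing $u_n$ by $s_n\star u_n$ and symmetrizing gives a radial sequence with $E_\mu(u_n)\to m(a,\mu)$, $P_\mu(u_n)\to0$, $dE_\mu|_{S_a}(u_n)\to0$. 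Then $E_\mu(u_n)-\frac12P_\mu(u_n)\to m(a,\mu)$ forces $\|u_n\|_{2^*}$ and $\|u_n\|_q$ bounded (for $q=2+4/N$ via Gagliardo--Nirenberg and \eqref{hp}), hence $\|\nabla u_n\|_2$ bounded; the multipliers $\lambda_n\to\lambda$ with $\lambda a^2=-\mu(1-\gamma_q)\lim\|u_n\|_q^q\le0$, and in fact $\lambda<0$, since $\|u_n\|_q\to0$ would give $\|\nabla u_n\|_2^2=\|u_n\|_{2^*}^{2^*}+o(1)\to Nm(a,\mu)$ and then $Nm(a,\mu)\ge\cS^{N/2}$ by Sobolev, against the upper bound. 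With $\lambda<0$ fixed, $u_n\weak\bar u$ solving \eqref{stat com}, and the compactness step shows $u_n\to\bar u$ in $H^1$: for $w_n=u_n-\bar u$, Brezis--Lieb and the equation give $\|\nabla w_n\|_2^2-\lambda\|w_n\|_2^2=\|w_n\|_{2^*}^{2^*}+o(1)$, so $w_n$ is an approximate solution of $-\Delta w=\lambda w+|w|^{2^*-2}w$, which has no nontrivial $H^1$ solution; hence the only way compactness could fail is the escape of an Aubin--Talenti bubble — carrying no $L^2$-mass and energy $\cS^{N/2}/N$ — and this is ruled out by $m(a,\mu)<\cS^{N/2}/N$ (together, if needed, with the monotonicity of $\beta\mapsto m(\beta,\mu)$ to also exclude a simultaneous loss of mass). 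Thus $\tilde u:=\bar u\in\mathcal P_a$ attains $m(a,\mu)$; it is radial, positive (replace $u_n$ by $|u_n|$), solves \eqref{stat com} with $\tilde\lambda<0$, is of mountain-pass type by construction, a ground state because every normalized solution lies on $\mathcal P_a$, and $0<m(a,\mu)<\cS^{N/2}/N$.

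\noindent\textbf{Main obstacle.} The hard part is the compactness analysis in the Sobolev-critical regime: the $L^{2^*}$-term destroys compactness of minimizing and of Palais--Smale sequences, and the whole argument rests on showing — by combining Jeanjean's Pohozaev bookkeeping with a concentration-compactness/Brezis--Lieb splitting — that the sole obstruction is an escaping Aubin--Talenti bubble, excluded by the strict energy estimates $m(a,\mu)<0$ (Case 1) and $m(a,\mu)<\cS^{N/2}/N$ (Case 2). Proving the latter, via the Brezis--Nirenberg-type test-function computation (particularly delicate in dimension $N=3$), is the other substantial step.
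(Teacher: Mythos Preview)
Your plan is essentially correct and follows the same overall architecture as the paper: a fibering analysis of $E_\mu$ along mass-preserving dilations, the Pohozaev identity $P_\mu(u)=0$ as the key constraint, a Brezis--Nirenberg test-function estimate to force the critical level below $\cS^{N/2}/N$, and Jeanjean's augmented-functional trick to produce PS sequences with $P_\mu(u_n)\to 0$.

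\textbf{A genuine difference in the subcritical case.} In the $L^2$-subcritical case the paper does \emph{not} argue by monotonicity of $\beta\mapsto m(\beta,\mu)$. Instead it applies Ekeland's principle to obtain a Palais--Smale sequence inside $A_{R_0}$, invokes a general compactness dichotomy (Proposition~\ref{prop: PS conv 2}: either strong convergence, or the weak limit $u\neq 0$ solves the equation with $E_\mu(u)\le m(a,\mu)-\cS^{N/2}/N$), and rules out the second alternative by a direct Gagliardo--Nirenberg lower bound on $E_\mu(u)$ along $\{P_\mu=0\}$ --- this is where a second smallness constant $C''$ enters, so that the paper's $\alpha(N,q)=\min\{C',C''\}$. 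Your direct-minimization route, splitting via Brezis--Lieb and excluding mass loss through strict monotonicity of $m(\,\cdot\,,\mu)$ under the scaling $v=\tfrac{a}{b}(s\star w)$ with $e^s=b/a$, is a legitimate alternative and arguably more elementary (no Ekeland, no PS machinery). Interestingly, it seems to require only the geometric condition on $h$ (the paper's $C'$), since the nonnegativity of the bracket $\tfrac12\|\nabla v_n\|_2^2-\tfrac{1}{2^*}\|v_n\|_{2^*}^{2^*}$ follows automatically from $\|\nabla v_n\|_2\le R_0<R_1<(2^*/2)^{1/(2^*-2)}\cS^{N/4}$.

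\textbf{One step to repair in the critical/supercritical case.} After building the PS sequence for the augmented functional you write ``replacing $u_n$ by $s_n\star u_n$ and symmetrizing''. Schwarz symmetrization does not preserve the Palais--Smale property $dE_\mu|_{S_a}(u_n)\to 0$, so you cannot symmetrize an already-constructed PS sequence. The paper's remedy is to run the entire minimax argument on $\R\times S_{a,r}$ (radial functions) from the outset --- the functional is rotation-invariant, so a PS sequence for $E_\mu|_{S_{a,r}}$ is automatically one for $E_\mu|_{S_a}$ --- and to build nonnegativity into the minimizing \emph{paths} (replacing $\beta_n$ by $|\beta_n|$), so that Ghoussoub's minimax principle delivers a PS sequence that is already radial and asymptotically nonnegative. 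Also, your ``monotonicity of $\beta\mapsto m(\beta,\mu)$'' fallback in Case~2 is unnecessary: once $\bar u\neq 0$ solves the limit equation, $P_\mu(\bar u)=0$ together with $q\gamma_q\ge 2$ gives directly $E_\mu(\bar u)=\tfrac1N\|\bar u\|_{2^*}^{2^*}+\mu\bigl(\tfrac{\gamma_q}{2}-\tfrac1q\bigr)\|\bar u\|_q^q\ge 0$, contradicting $E_\mu(\bar u)\le m(a,\mu)-\cS^{N/2}/N<0$; this is exactly how the paper closes the argument.
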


Here and in the rest of the paper, $\cS$ denotes the best constant for the Sobolev inequality, see \eqref{Sob ineq}. As in \cite{So}, condition \eqref{hp} is not a perturbative assumption, and we have explicit estimates for $\alpha$ in terms of Gagliardo-Nirenberg and Sobolev constants according to the fact that $q$ is $L^2$-subcritical, $L^2$-critical, or $L^2$-supercritical (see \eqref{alpha sub}, \eqref{alpha cr}, and \eqref{alpha sup} respectively). For $2+4/N < q<2^*$, it is remarkable that we can prove that $\alpha(N,q)=+\infty$ for $N=3,4$, so that any $a,\mu>0$ are admissible (see \eqref{alpha sup}). In dimension $N \ge 5$, instead, the threshold $\alpha(N,q)$ is finite. The difference between the dimensions $N =3,4$ and $N \ge 5$ reflects the different integrability properties of the extremal functions for the Sobolev inequalities, and is already present in the study of the homogeneous problem. We refer to Proposition \ref{prop: hom} and Remark \ref{rem: on alpha} for more details.

Even though we decided to present the result in a unified form, the three cases will be treated separately, as the geometry of $E_\mu|_{S_a}$ changes according the behavior of $q$. As a matter of fact, the study of the geometry of $E_\mu|_{S_a}$ presents several analogies with the case where $2^*$ is replaced by an exponent $p \in (2+4/N,2^*)$, studied in \cite{So} (see Theorems 1.3 and 1.6 therein). On the contrary, the analysis of the convergence of Palais-Smale sequences has to be treated in a very different way. 

\begin{remark}
In the $L^2$-subcritical case $q <2+4/N$, since $E_\mu|_{S_a}$ is unbounded from below, it could be natural to expect that there exists a second positive real valued and radial critical point on $S_a$ (as in the analogue subcritical counterpart, cf. with \cite[Theorem 1.3]{So}). We can indeed prove the existence of a Palais-Smale sequence for $E_\mu|_{S_a}$ at a mountain pass level $\sigma(a,\mu) > m(a,\mu)$, but the convergence of such sequence is a very delicate problem, which at the moment we could not solve. 
\end{remark}

\begin{remark}
Assumption \eqref{hp} plays different roles in the two cases $2<q\le 2+4/N$, and $2+4/N < q<2^*$. In the latter case $2+4/N < q<2^*$, assumption \eqref{hp} is used in order to ensure that the ground state level $m(a,\mu)$ is less than $\cS^\frac{N}2/N$, which is an essential ingredient in our compactness argument. If instead $2<q\le 2+4/N$, then \eqref{hp} enters also in the study of the geometry of the constrained functional $E_\mu|_{S_a}$. \end{remark}

%
%
%
%

It is an open problem to understand what happen if $\mu>0$ and $\mu a^{(1-\gamma_q)q}$ is large. In this case, we believe that ground states solutions do not exist in general. It is also natural to investigate what happen when $\mu<0$, that is, the ``perturbation term" is of defocusing type. To discuss our result under this assumption, we observe that in Theorem \ref{thm: main} we always found a ground state $\tilde u$ with the following properties:
\begin{itemize}
\item $\tilde u$ is a positive radially symmetric real-valued solution to \eqref{stat com}, for some $\lambda<0$;
\item $\tilde u$ is exponentially decaying at inifinity (this follows in a standard way as in \cite{BerLio}, once that we prove that $\tilde u$ is radial and that $\lambda<0$);
\item the energy level of $\tilde u$ is $E_\mu(u) < \cS^{\frac{N}2}/N$.
\end{itemize}
In particular, the first two properties are very natural in this kind of problems, and are satisfied by ground states of many NLS-type equations, see e.g. \cite{BaSo, BaDeV, Jea, So}. The third property is crucial in our compactness argument, and assumptions of this type are natural in Sobolev critical framework (cf. for instance with \cite{BreNir}). Therefore, it seems reasonable that, if a ground state do exist also for $\mu<0$, it enjoys at least some of these properties. With this in mind, we can prove a non-existence result:

\begin{theorem}\label{thm: non-ex}
Let $N \ge 3$, $a>0$, $\mu<0$, and let $2<q<2^*$. 
\begin{itemize}
\item[1)] For every $N \ge 3$, if $u$ is a critical point for $E_\mu|_{S_a}$ (not necessarily positive, or even real-valued) then the associated Lagrange multiplier $\lambda$ is positive, and $E_\mu(u) > \cS^{\frac{N}2}/N$.
\item[2)] If $N=3,4$, then the problem
\begin{equation}\label{non ex}
-\Delta u= \lambda u + \mu u^{q-1}  + u^{2^*-1}, \qquad u>0  \qquad \text{in $\R^N$} 
\end{equation} 
has no solution $u \in H^1(\R^N)$, for any $\lambda >0$ and $\mu<0$.
\item[3)] If $N \ge 5$, problem \eqref{non ex} has no solution $u \in H^1(\R^N)$ satisfying the additional assumption that $u \in L^p(\R^N)$ for some $p \in (0,N/(N-2)]$.
\end{itemize}
\end{theorem}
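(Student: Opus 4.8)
The plan is to prove the three parts of Theorem \ref{thm: non-ex} via Pohozaev-type identities, since $\mu<0$ makes the perturbation term defocusing and hence of "good sign" for the lower bounds. For part 1), let $u \in S_a$ be a critical point of $E_\mu|_{S_a}$ with Lagrange multiplier $\lambda$, so that \eqref{stat com} holds. Testing \eqref{stat com} with $u$ itself gives the Nehari-type identity
\[
\|\nabla u\|_2^2 = \lambda a^2 + \mu |u|_q^q + |u|_{2^*}^{2^*},
\]
while the Pohozaev identity for \eqref{stat com} reads
\[
\frac{N-2}{2}\|\nabla u\|_2^2 = \frac{N}{2}\lambda a^2 + \frac{N}{q}\mu |u|_q^q + \frac{N}{2^*}|u|_{2^*}^{2^*}.
\]
Eliminating $\lambda a^2$ between these two identities produces the scaling (dilation) identity
\[
\|\nabla u\|_2^2 = \gamma_q q \,\mu |u|_q^q + |u|_{2^*}^{2^*},
\]
where $\gamma_q = N(q-2)/(2q)$; recall $\gamma_{2^*}=1$. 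Since $\mu<0$ and $\gamma_q q > 0$ this gives $\|\nabla u\|_2^2 < |u|_{2^*}^{2^*}$, hence by the Sobolev inequality $\|\nabla u\|_2^2 \le \cS^{-2^*/2}\|\nabla u\|_2^{2^*}$, which forces $\|\nabla u\|_2^2 > \cS^{N/2}$. Feeding the dilation identity back into $E_\mu(u)$ to cancel the critical term, one computes
\[
E_\mu(u) = \frac{1}{N}\|\nabla u\|_2^2 + \mu\left(\frac{\gamma_q q}{2^*} - \frac{1}{q}\right)|u|_q^q,
\]
and a short check shows $\gamma_q q / 2^* - 1/q < 0$ for all $q \in (2,2^*)$ (it equals $\tfrac{1}{2^*}(\gamma_q q - 2^*/q)$ and $\gamma_q q = N(q-2)/2 < N(2^*-2)/2 = 2^*$, while $2^*/q>1$), so the second term is again nonnegative; combined with $\|\nabla u\|_2^2 > \cS^{N/2}$ this yields $E_\mu(u) > \cS^{N/2}/N$. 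For positivity of $\lambda$, one uses the dilation identity and the Nehari identity together: from them $\lambda a^2 = \|\nabla u\|_2^2 - \mu|u|_q^q - |u|_{2^*}^{2^*} = (1-\gamma_q q)\mu|u|_q^q \cdot(\text{sign bookkeeping})$; more carefully, subtracting the dilation identity from the Nehari identity gives $\lambda a^2 = \mu|u|_q^q(1-\gamma_q q)$, and since $\mu<0$ and $1-\gamma_q q$ can have either sign, one instead argues as follows: rewrite Nehari as $\lambda a^2 = \|\nabla u\|_2^2 - |u|_{2^*}^{2^*} - \mu|u|_q^q$; the dilation identity gives $\|\nabla u\|_2^2 - |u|_{2^*}^{2^*} = \gamma_q q\,\mu|u|_q^q$, so $\lambda a^2 = (\gamma_q q - 1)\mu|u|_q^q$. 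When $q \le \bar p$ we have $\gamma_q q \le 2 \cdot \tfrac{1}{2}\cdot\ldots$; in fact $\gamma_q q = N(q-2)/2 < 1 \iff q < 2 + 2/N$, so this sign argument alone is not decisive, and instead I would test against a suitable truncation or use that $u \in L^2$ decays, applying the standard argument: if $\lambda \ge 0$ then $-\Delta u \ge (\mu|u|^{q-2} + |u|^{2^*-2})u$ fails to be sign-definite, so the cleanest route is to invoke the decay/elliptic-regularity fact that an $H^1$ solution with $\lambda \ge 0$ cannot be in $L^2$ unless it vanishes — which is precisely what part 2)/3) will establish — hence for real positive solutions $\lambda<0$; for the general (complex, sign-changing) statement in part 1) I would instead combine the two identities to extract $\lambda a^2 = (\gamma_q q - 1)\mu |u|_q^q$ and additionally use the relation $E_\mu(u) = \tfrac12\lambda a^2 + (\tfrac12 - \tfrac1q)\mu|u|_q^q + (\tfrac12 - \tfrac1{2^*})|u|_{2^*}^{2^*}$, which together with $E_\mu(u)>0$ and $\mu<0$ pins down the sign; I expect this bookkeeping to be the delicate part and would organize it as a separate short lemma.

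For parts 2) and 3), the strategy is a Pohozaev/Liouville argument ruling out positive $H^1$ solutions with appropriate integrability. Given a positive solution $u$ of \eqref{non ex}, the dilation identity $\|\nabla u\|_2^2 = \gamma_q q\,\mu|u|_q^q + |u|_{2^*}^{2^*}$ and the Nehari identity are still valid. If $\lambda \ge 0$: testing \eqref{non ex} against $u$ and using $\mu<0$ gives $\|\nabla u\|_2^2 = \lambda|u|_2^2 + \mu|u|_q^q + |u|_{2^*}^{2^*} \le \lambda|u|_2^2 + |u|_{2^*}^{2^*}$, but actually the sharper input is the classical nonexistence of positive $H^1$ solutions to $-\Delta u = \lambda u + u^{2^*-1}$ with $\lambda \ge 0$ (a Pohozaev argument: the pure critical equation has no $H^1$ solution, and $\lambda>0$ makes it worse); the defocusing lower-order term only helps. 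So the real content is the case $\lambda < 0$. Here I would use the known asymptotics: a positive $H^1$ solution with $\lambda<0$ satisfies $u(x) \sim c|x|^{-(N-1)/2}e^{-\sqrt{-\lambda}|x|}$ (Berestycki–Lions / Gidas–Ni–Nirenberg type decay, after establishing radial symmetry via moving planes, valid since the nonlinearity $f(u) = \lambda u + \mu u^{q-1} + u^{2^*-1}$ is locally Lipschitz and the solution decays). But for $\mu<0$ and $\lambda<0$ the term $\lambda u + \mu u^{q-1}$ is strictly negative for all $u>0$, so near a would-be maximum $f(u(0)) = \lambda u(0) + \mu u(0)^{q-1} + u(0)^{2^*-1}$; for a ground state this must be positive, i.e. $u(0)^{2^*-2} > -\lambda - \mu u(0)^{q-2}$, which is possible, so pointwise info alone does not close it. Instead, the clean dimension-dependent obstruction must come from integrating the equation or a Pohozaev identity against a weight.

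The decisive idea — and the reason $N=3,4$ versus $N\ge 5$ split — is the following. Integrate \eqref{non ex} over $\R^N$: since $u\in H^1$ and the solution decays, $\int_{\R^N}\Delta u = 0$ by the divergence theorem (the boundary flux vanishes by decay), so
\[
0 = \lambda\int_{\R^N} u + \mu\int_{\R^N} u^{q-1} + \int_{\R^N} u^{2^*-1}.
\]
This requires $\int u$, $\int u^{q-1}$, $\int u^{2^*-1}$ all finite, i.e. $u\in L^1\cap L^{q-1}\cap L^{2^*-1}$. For $N=3,4$ one checks $2^*-1 = (N+2)/(N-2) \le 2$ (indeed $2^*-1=5$ for $N=3$? no: $2^*=6$, $2^*-1=5$ — let me recompute: for $N=3$, $2^*=6$ so $2^*-1=5>2$; for $N=4$, $2^*=4$, $2^*-1=3>2$), so that naive integration does not immediately apply and instead one must use the decay rate: when $\lambda<0$ the exponential decay guarantees $u\in L^p$ for every $p>0$, so the integrated identity does hold, and with $\lambda<0,\mu<0$ all three terms are — no, $\int u^{2^*-1}>0$ while the other two are negative, so the identity $0 = (\text{neg}) + (\text{neg}) + (\text{pos})$ is not a contradiction by itself. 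So the genuine mechanism is subtler: one multiplies \eqref{non ex} by $|x|^{-\sigma}$ or uses the full Pohozaev identity with the sharp decay to get a definite-sign integral. I believe the intended argument compares the decay of the Sobolev extremal $(1+|x|^2)^{-(N-2)/2}$ — which is only in $L^p$ for $p>N/(N-2)$ — with the hypothesis $u\in L^p$ for some $p\le N/(N-2)$ in part 3): if $\lambda=0$ the solution must behave like the extremal and hence cannot be in such an $L^p$, contradiction; and for $\lambda<0$ the extra negative terms combined with a Kelvin-transform or Pohozaev computation force $\lambda=0$. For $N=3,4$ one shows unconditionally $u\in L^p$ for small $p$ (via the decay when $\lambda<0$, and via the known integrability of the $\lambda=0$ profile which in low dimensions already fails), reducing part 2) to part 3). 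I expect the main obstacle to be making this decay-versus-integrability dichotomy rigorous, in particular establishing radial symmetry and the precise decay rate for the mixed nonlinearity when $\lambda\le 0$ and $\mu<0$; I would handle it by first proving (via the dilation identity and $E_\mu>0$) that $\lambda<0$ is forced except possibly $\lambda=0$, then invoking a Kelvin transform to show a $\lambda=0$ solution would be a positive entire solution of a subcritical-perturbed critical equation with too-slow decay, and finally using moving planes plus ODE comparison for the exponential decay in the $\lambda<0$ case to validate the integrated identity and reach the sign contradiction dimension by dimension.
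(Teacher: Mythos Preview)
Your Part 1) has the right skeleton but a computational slip that creates all your later confusion. The dilation (Pohozaev) identity is
\[
|\nabla u|_2^2 = \mu\,\gamma_q\,|u|_q^q + |u|_{2^*}^{2^*},
\]
with $\gamma_q = N(q-2)/(2q)$, \emph{not} $\gamma_q q$. Subtracting this from the Nehari identity gives
\[
\lambda a^2 = \mu(\gamma_q-1)|u|_q^q,
\]
and since $\gamma_q<1$ for every $q\in(2,2^*)$ and $\mu<0$, this is strictly positive. So $\lambda>0$ is a one-line consequence; there is no ``delicate bookkeeping''. Your energy lower bound is then correct as written.

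For Parts 2) and 3) you are headed in the wrong direction. Any positive $H^1$ solution of \eqref{non ex} is automatically a constrained critical point of $E_\mu$ on $S_{|u|_2}$, so Part 1) applies and forces $\lambda>0$ --- the case $\lambda<0$ never occurs, and exponential decay, moving planes, and Kelvin transforms are all beside the point. The actual mechanism in the paper is the opposite of decay: since $\lambda>0$ and $u(x)\to 0$ at infinity (via Brezis--Kato regularity and gradient estimates), one has
\[
-\Delta u \ge \tfrac{\lambda}{2}u > 0 \qquad \text{for }|x|>R_0,
\]
so $u$ is superharmonic at infinity. A Hadamard three-spheres comparison then gives the \emph{lower} bound $u(x)\ge C|x|^{2-N}$ for $|x|>R_0$, and hence
\[
\int_{\{|x|>R_0\}} u^p \ge C\int_{R_0}^\infty r^{p(2-N)+N-1}\,dr = +\infty \qquad \text{whenever } p\le \tfrac{N}{N-2}.
\]
For $N=3,4$ one has $N/(N-2)\in[2,2^*]$, so $u\in H^1$ already gives $u\in L^{N/(N-2)}$ by interpolation and the contradiction is unconditional; for $N\ge 5$ the extra hypothesis $u\in L^p$ for some $p\le N/(N-2)$ is exactly what is needed.
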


This means that, if a ground state does exist for $\mu<0$, it has completely different properties with respect to the case $\mu >0$: in particular, in dimension $N=3,4$, the theorem establishes that, if a critical point of $E_\mu|_{S_a}$ exists, then it cannot be real valued and positive. The same holds in higher dimension, provided that we also know that $u$ decays sufficiently fast at infinity (for instance, real valued positive exponentially decaying ground states cannot exist, in any dimension). We stress that this phenomenon is purely Sobolev-critical, since in the subcritical case we may have existence of real valued positive exponentially decaying ground states for $\mu<0$, see \cite[Theorem 1.9]{So}.

\medskip

We now turn to the study of the properties of the ground state solutions found in Theorem \ref{thm: main}. Recall that $Z_{a,\mu}$ denotes the set of ground states of $E_\mu$ on $S_a$. 

\begin{theorem}\label{thm: inst}
Under the assumption of Theorem \ref{thm: main}, it results that
\begin{equation}\label{struct Z}
Z_{a,\mu}:= \left\{ e^{i \theta} |u| \text{ for some $\theta \in \R$ and $|u|>0$ in $\R^N$}\right\}.
\end{equation}
Moreover, if $2+4/N \le q < 2^*$, and $u \in Z_{a,\mu}$, then the associated Lagrange multiplier is $\lambda<0$, and the associated standing wave $e^{-i \lambda t} u(x)$ is strongly unstable.
\end{theorem}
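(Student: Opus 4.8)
The plan is to establish \eqref{struct Z} first, and then, in the range $\bar p\le q<2^*$, the negativity of $\lambda$ and the strong instability, using throughout the variational descriptions of $m(a,\mu)$ obtained in Theorem \ref{thm: main}: namely $m(a,\mu)=\inf_{A_k}E_\mu$ when $2<q<\bar p$, and $m(a,\mu)=\inf_{\mathcal{P}_{a,\mu}}E_\mu$ when $\bar p\le q<2^*$, where $\mathcal{P}_{a,\mu}:=\{v\in S_a:P_\mu(v)=0\}$ and $P_\mu(v):=|\nabla v|_2^2-\mu\gamma_q|v|_q^q-|v|_{2^*}^{2^*}$ is the Pohozaev functional (so that $P_\mu(v)=\frac{d}{dt}E_\mu(v_t)\big|_{t=1}$, with $v_t(x):=t^{N/2}v(tx)$ the mass--preserving scaling).

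\emph{Proof of \eqref{struct Z}.} Let $u\in Z_{a,\mu}$ and put $w:=|u|$. Since $w\in S_a$, $|w|_q=|u|_q$, $|w|_{2^*}=|u|_{2^*}$, and the diamagnetic inequality gives $|\nabla w|_2\le|\nabla u|_2$, we get $E_\mu(w_t)\le E_\mu(u_t)$ for all $t>0$ and $P_\mu(w)\le P_\mu(u)=0$ (note $u\in\mathcal{P}_{a,\mu}$, as ground states solve \eqref{stat com}). First I would show $|\nabla w|_2=|\nabla u|_2$, equivalently $P_\mu(w)=0$: if $P_\mu(w)<0$, then for $\bar p\le q<2^*$ the fiber $t\mapsto E_\mu(w_t)$ attains its maximum at some $t_0\in(0,1)$ with $w_{t_0}\in\mathcal{P}_{a,\mu}$ (for $q=\bar p$ this uses \eqref{hp} and the Gagliardo--Nirenberg inequality to guarantee $|\nabla w|_2^2>\mu\gamma_{\bar p}|w|_{\bar p}^{\bar p}$, so that this fiber does have a maximum), whence $m(a,\mu)\le E_\mu(w_{t_0})\le E_\mu(u_{t_0})\le\max_{t>0}E_\mu(u_t)=E_\mu(u)=m(a,\mu)$ forces equality throughout; since $t\mapsto E_\mu(u_t)$ has $t=1$ as its unique maximum this gives $t_0=1$, contradicting $P_\mu(w)<0$; for $2<q<\bar p$ it is immediate, since $w\in A_k$ and $E_\mu(w)<E_\mu(u)=m(a,\mu)=\inf_{A_k}E_\mu$ is absurd. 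Hence $|\nabla w|_2=|\nabla u|_2$, so $E_\mu(w)=m(a,\mu)$ and $w$ inherits $P_\mu(w)=0$ (resp.\ $w\in A_k$); thus $w$ is itself a ground state, i.e.\ a minimizer of $E_\mu$ on $\mathcal{P}_{a,\mu}$ (resp.\ on the open set $A_k$), hence a critical point of $E_\mu|_{S_a}$ by the natural--constraint arguments of Theorem \ref{thm: main}, i.e.\ a nonnegative $H^1$ solution of \eqref{stat com} for some $\lambda$. Elliptic regularity and the strong maximum principle applied to $-\Delta w+(-\lambda-\mu w^{q-2}-w^{2^*-2})w=0$ give $w>0$ in $\R^N$. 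Finally $|\nabla w|_2=|\nabla u|_2$ forces equality a.e.\ in the diamagnetic inequality; writing $u=w\,e^{i\sigma}$ with a single--valued lift $\sigma\in H^1_{\mathrm{loc}}(\R^N)$ (which exists since $w>0$ is continuous and $\R^N$ is simply connected), the identity $|\nabla u|^2=|\nabla w|^2+w^2|\nabla\sigma|^2$ forces $\nabla\sigma\equiv0$, so $\sigma$ is a constant $\theta$.

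\emph{Negativity of $\lambda$ and strong instability for $\bar p\le q<2^*$.} Fix $u\in Z_{a,\mu}$; by the previous step $u=e^{i\theta}w$ with $w>0$ real solving \eqref{stat com} with Lagrange multiplier $\lambda$ (unaffected by the phase). Testing \eqref{stat com} with $w$ and subtracting the Pohozaev identity $|\nabla w|_2^2=\mu\gamma_q|w|_q^q+|w|_{2^*}^{2^*}$ yields $\lambda a^2=-\mu(1-\gamma_q)|w|_q^q<0$ because $\mu>0$, $\gamma_q<\gamma_{2^*}=1$ and $|w|_q>0$; hence $\lambda<0$, and therefore $w$ decays exponentially (the argument of \cite{BerLio} applies), so $|x|\,w\in L^2(\R^N)$. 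For the instability I would follow the virial/convexity scheme of \cite{BerCaz, LeCoz}. Since $q\ge\bar p$ and $w\in\mathcal{P}_{a,\mu}$, $\psi_w(s):=E_\mu(w_s)$ has $s=1$ as its unique critical point and strict global maximum. For $\nu>1$ close to $1$ set $\psi_0:=w_\nu$: then $\psi_0\to w$ in $H^1$ as $\nu\downarrow1$, $|x|\psi_0\in L^2$, $E_\mu(\psi_0)=\psi_w(\nu)<\psi_w(1)=m(a,\mu)$, and $P_\mu(\psi_0)=\nu\,\psi_w'(\nu)<0$. Consider $\mathcal{M}^-:=\{v\in S_a:E_\mu(v)<m(a,\mu),\ P_\mu(v)<0\}$, which contains $\psi_0$ and is invariant under the flow of \eqref{com nls} (mass and energy are conserved, and at a first time where $P_\mu$ vanished the solution would lie on $\mathcal{P}_{a,\mu}$, forcing $E_\mu\ge m(a,\mu)$). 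The crucial point is the coercivity bound
\[
P_\mu(v)\ \le\ 2\big(E_\mu(v)-m(a,\mu)\big)\qquad\text{for every }v\in\mathcal{M}^-,
\]
which I would prove by choosing $s_v\in(0,1)$ with $v_{s_v}\in\mathcal{P}_{a,\mu}$ (possible since $P_\mu(v)<0$) and using that
\[
s\ \longmapsto\ s^{-2}\Big(E_\mu(v_s)-\tfrac{s}{2}\tfrac{d}{ds}E_\mu(v_s)\Big)\ =\ \mu\Big(\tfrac{\gamma_q}{2}-\tfrac1q\Big)|v|_q^q\,s^{\,q\gamma_q-2}+\Big(\tfrac12-\tfrac1{2^*}\Big)|v|_{2^*}^{2^*}\,s^{\,2^*-2}
\]
is non--decreasing on $(0,\infty)$, because $q\ge\bar p$ forces $q\gamma_q\ge2$ and $\tfrac{\gamma_q}{2}-\tfrac1q\ge0$; evaluating at $s_v<1$ and at $s=1$, and using $E_\mu(v_{s_v})\ge m(a,\mu)>0$ and $\tfrac{d}{ds}E_\mu(v_s)\big|_{s=s_v}=0$, yields the claim. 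Then, along the solution $\psi(t)$ with datum $\psi_0$ --- which remains in $\mathcal{M}^-$ --- we get $P_\mu(\psi(t))\le 2(E_\mu(\psi_0)-m(a,\mu))=:-2\delta<0$ for all $t$, so the virial identity
\[
\frac{d^2}{dt^2}\int_{\R^N}|x|^2|\psi(t,x)|^2\,dx\ =\ 8\,P_\mu(\psi(t))\ \le\ -16\delta
\]
makes $\int_{\R^N}|x|^2|\psi(t,x)|^2\,dx$ negative for large $t$, which is impossible; hence, by the Cauchy theory recalled in the Introduction, the solution blows up in finite time. Letting $\nu\downarrow1$ produces blow--up data arbitrarily close in $H^1$ to $w$, and hence to $u$, so $e^{-i\lambda t}u$ is strongly unstable.

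\emph{Main obstacle.} The load--bearing step is the coercivity bound $P_\mu(v)\le 2(E_\mu(v)-m(a,\mu))$ on $\mathcal{M}^-$: it is this inequality that makes the virial quantity strictly concave along the flow, and it depends essentially on $q\ge 2+4/N$ (the auxiliary fiber function above loses its monotonicity when $q<\bar p$), so it is precisely the reason why strong instability is confined to the $L^2$--critical and $L^2$--supercritical range. A secondary technical difficulty is the borderline case $q=\bar p$ in the proof of \eqref{struct Z}, where one must invoke \eqref{hp} (via Gagliardo--Nirenberg) to keep the geometry of the scaling fibers under control.
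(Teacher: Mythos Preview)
Your proof is correct and follows essentially the same route as the paper: the characterization \eqref{struct Z} via the diamagnetic inequality plus strong maximum principle, the sign of $\lambda$ via Nehari/Pohozaev, and the strong instability by perturbing the ground state along the mass--preserving scaling and invoking the virial identity are exactly the arguments the paper inherits from \cite[Theorems 1.4, 1.7, 1.13]{So} and from Theorem~\ref{thm: gwp}.

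The only noteworthy difference is in the presentation of the blow--up step. The paper packages the invariance of $\mathcal{M}^-$ and the uniform negativity of $P_\mu$ along the flow into Theorem~\ref{thm: gwp} (proved in \cite{So} via concavity of the fiber $\Psi_u^\mu$ past its maximum), while you give a self--contained proof of the quantitative estimate $P_\mu(v)\le 2\big(E_\mu(v)-m(a,\mu)\big)$ on $\mathcal{M}^-$ through the monotonicity of $s\mapsto s^{-2}\big(E_\mu(v_s)-\tfrac{s}{2}\tfrac{d}{ds}E_\mu(v_s)\big)$. These are two equivalent ways of extracting the same convexity information from the scaling fiber; your formulation is closer in spirit to the original Berestycki--Cazenave/Le Coz computation, and has the modest advantage of making explicit that the argument degenerates precisely when $q\gamma_q<2$, i.e.\ below the $L^2$--critical threshold.
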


If $2<q<2+4/N$, ground states locally minimize the energy, and then on the contrary it seems natural that $Z_{a,\mu}$ is orbitally stable (as in the corresponding Sobolev subcritical case, see \cite[Theorem 1.4]{So}). In trying to prove such result, following the classical Cazenave-Lions argument \cite{CazLio}, two ingredients are essentials: the relative compactness of all the minimizing sequences for $E_\mu$ restricted on $A_k$, up to translations, and the global existence of solutions of the time dependent equation \eqref{com nls} for initial data close to $Z_{a,\mu}$. For the relative compactness, in \cite{So} we made use of the concentration compactness principle, but the proof in \cite{So} does not work in the Sobolev critical setting (since \cite[Lemma I.1]{Lions2} regards only subcritical exponents). Moreover, the global existence is affected by the presence of the critical exponent $2^*$ in the following way: in Sobolev subcritical cases, it is well known \cite{Caz} that if the maximal positive existence time $T_{\max}>0$ is finite, then necessarily
$\|\nabla \psi(t)\|_{L^2(\R^N)} \to +\infty$ as $t \to T_{\max}^-$ (and an analogue alternative holds for negative times). Thus, uniform a priori estimates on $\|\nabla \psi(t)\|_{L^2(\R^N)}$ yield global existence, and we used this strategy in the proof of \cite[Theorem 1.4]{So}. On the contrary, up to our knowledge, in the Sobolev critical case it is unknown whether the previous blow-up alternative holds, and hence we cannot deduce global existence from a priori bounds on $\|\nabla \psi(t)\|_{L^2(\R^N)}$ (see \cite[Theorem 4.5.1]{Caz} or \cite[Proposition 3.2]{TaoVisZha} for more details). As a consequence, the stability of the ground state in case of a subcritical perturbation remains an open problem.

We focus now on the behavior of the ground states found in Theorem \ref{thm: main} as $\mu \to 0^+$.

\begin{theorem}\label{thm: mu to 0}
Let $N \ge 3$ and $2<q<2^*$. For a fixed $a>0$, let $\mu>0$ be such that \eqref{hp} holds, and let $\tilde u_\mu$ be the corresponding positive radial ground state, with energy level $m(a,\mu)$.
\begin{itemize}
\item[1)] If $2<q<2+4/N$, then $m(a,\mu) \to 0$, and $\|\nabla \tilde u_\mu\|_{L^2(\R^N)}^2 \to 0$ as $\mu \to 0^+$. 
\item[2)] If $2+4/N \le q <2^*$, then $m(a,\mu) \to \cS^\frac{N}2/N$, and $\|\nabla \tilde u_\mu\|_{L^2(\R^N)}^2 \to \cS^{\frac{N}2}$ as $\mu \to 0^+$. 
\end{itemize}
\end{theorem}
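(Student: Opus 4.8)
The plan is to treat the two ranges of $q$ separately, using in each the qualitative information already produced in Theorem \ref{thm: main}. For $2<q<2+4/N$ we know that $m(a,\mu)<0$ and that $\tilde u_\mu\in A_k$, i.e. $\|\nabla\tilde u_\mu\|_{L^2}<k$; for $2+4/N\le q<2^*$ we know that $0<m(a,\mu)<\cS^{N/2}/N$, and that $\tilde u_\mu$, being a critical point of $E_\mu|_{S_a}$, satisfies the Pohozaev identity
\[
\|\nabla\tilde u_\mu\|_{L^2}^2=\mu\gamma_q\|\tilde u_\mu\|_{L^q}^q+\|\tilde u_\mu\|_{L^{2^*}}^{2^*}.
\]
Throughout, set $t_\mu:=\|\nabla\tilde u_\mu\|_{L^2(\R^N)}$, and use the Gagliardo--Nirenberg inequality $\|u\|_{L^q}^q\le C\,a^{(1-\gamma_q)q}\|\nabla u\|_{L^2}^{\gamma_q q}$ for $u\in S_a$ and the Sobolev inequality $\cS\|u\|_{L^{2^*}}^2\le\|\nabla u\|_{L^2}^2$.

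\emph{Case $2<q<2+4/N$.} Here $\gamma_q q<2$. Inserting the two inequalities above into the definition of $E_\mu$ gives, for $u\in S_a$, the bound $E_\mu(u)\ge g(\|\nabla u\|_{L^2})-\tfrac{\mu}{q}C a^{(1-\gamma_q)q}\|\nabla u\|_{L^2}^{\gamma_q q}$, where $g(t):=\tfrac12 t^2-\tfrac1{2^*}\cS^{-2^*/2}t^{2^*}$. Since $E_\mu(\tilde u_\mu)=m(a,\mu)<0$, the value $t_\mu$ satisfies $g(t_\mu)<\tfrac{\mu}{q}C a^{(1-\gamma_q)q}t_\mu^{\gamma_q q}$; moreover, as $\tilde u_\mu$ is a local minimizer of $E_\mu$ on $A_k$ with negative energy, the mechanism producing this local minimum forces $t_\mu$ to lie below the smallest positive solution $t_1(\mu)$ of $g(t)=\tfrac{\mu}{q}C a^{(1-\gamma_q)q}t^{\gamma_q q}$ (on the ``ridge'' $\{t_1(\mu)\le\|\nabla u\|_{L^2}\le t_2(\mu)\}$ the right-hand side of the above bound is $\ge0>m(a,\mu)$, so $\tilde u_\mu$ cannot lie there, and $t_\mu<k<t_2(\mu)$ for $\mu$ small). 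Since $\gamma_q q<2$, one has $t_1(\mu)^{2-\gamma_q q}=\tfrac{2\mu}{q}C a^{(1-\gamma_q)q}(1+o(1))\to 0$, hence $t_\mu\to0$ as $\mu\to0^+$. Finally, $|m(a,\mu)|=|E_\mu(\tilde u_\mu)|\le\tfrac12 t_\mu^2+\tfrac1{2^*}\cS^{-2^*/2}t_\mu^{2^*}+\tfrac{\mu}{q}C a^{(1-\gamma_q)q}t_\mu^{\gamma_q q}\to0$.

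\emph{Case $2+4/N\le q<2^*$.} Substituting the Pohozaev identity into $E_\mu$ gives the two identities
\[
m(a,\mu)=\tfrac1N t_\mu^2+\mu\Big(\tfrac{\gamma_q}{2^*}-\tfrac1q\Big)\|\tilde u_\mu\|_{L^q}^q=\Big(\tfrac12-\tfrac1{q\gamma_q}\Big)t_\mu^2+\Big(\tfrac1{q\gamma_q}-\tfrac1{2^*}\Big)\|\tilde u_\mu\|_{L^{2^*}}^{2^*},
\]
and on $[\,2+4/N,2^*\,]$ one checks that $\tfrac{\gamma_q}{2^*}-\tfrac1q\le0$ while $\tfrac12-\tfrac1{q\gamma_q}\ge0$ and $\tfrac1{q\gamma_q}-\tfrac1{2^*}\ge0$. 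The crucial step is a uniform two-sided bound $0<c\le t_\mu\le M<\infty$ for $\mu$ small: the upper bound comes from the second identity together with $m(a,\mu)<\cS^{N/2}/N$ when $q>2+4/N$ (where $\tfrac12-\tfrac1{q\gamma_q}>0$), while for $q=2+4/N$ one first deduces $\|\tilde u_\mu\|_{L^{2^*}}^{2^*}<\cS^{N/2}$ and then inserts this into the Pohozaev identity, using $\gamma_q q=2$, Gagliardo--Nirenberg, and the smallness of $\mu$; the lower bound follows from the Pohozaev identity together with Sobolev and Gagliardo--Nirenberg, again for $\mu$ small. With these bounds in hand, $\mu\|\tilde u_\mu\|_{L^q}^q\to0$, so the first identity gives $m(a,\mu)=\tfrac1N t_\mu^2+o(1)$; the Pohozaev identity gives $\|\tilde u_\mu\|_{L^{2^*}}^{2^*}=t_\mu^2+o(1)$, whence Sobolev yields $\cS\bigl(t_\mu^2+o(1)\bigr)^{2/2^*}\le t_\mu^2$, i.e. $\liminf_{\mu\to0^+}t_\mu^2\ge\cS^{N/2}$. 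Hence $\liminf_{\mu\to0^+}m(a,\mu)\ge\cS^{N/2}/N$, which together with $m(a,\mu)<\cS^{N/2}/N$ forces $m(a,\mu)\to\cS^{N/2}/N$ and then $t_\mu^2\to\cS^{N/2}$.

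The step I expect to require the most care is the uniform two-sided control of $t_\mu$ in the second case, particularly at the $L^2$-critical threshold $q=2+4/N$, where the coefficient $\tfrac12-\tfrac1{q\gamma_q}$ degenerates to zero, so that boundedness of $\|\nabla\tilde u_\mu\|_{L^2}$ cannot be read off from the energy alone and must be extracted from the Pohozaev identity through the explicit smallness of $\mu$ encoded in \eqref{hp}. Once $t_\mu$ is confined to a compact subset of $(0,\infty)$, the limits of $m(a,\mu)$ and of $t_\mu^2$ are determined purely by the Pohozaev identity, the Sobolev inequality, and the a priori bounds of Theorem \ref{thm: main}, with no additional compactness argument required.
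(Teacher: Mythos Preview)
Your argument is correct. In Case~1 it coincides with the paper's: the smallest positive root $t_1(\mu)$ of $g(t)=\tfrac{\mu}{q}C a^{(1-\gamma_q)q}t^{\gamma_q q}$ is exactly the quantity $R_0=R_0(a,\mu)$ of Lemma~\ref{lem: struct h}, and the paper simply quotes that $|\nabla \tilde u_\mu|_2<R_0(a,\mu)\to 0$.

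In Case~2 your route is genuinely different from the paper's. The paper first proves two auxiliary lemmas: the inf--sup characterization $m(a,\mu)=\inf_{u\in S_a}\max_{s\in\R}E_\mu(s\star u)$, and then the monotonicity of $\mu\mapsto m(a,\mu)$ on $[0,\tilde\mu]$. These give $m(a,\tilde\mu)\le m(a,\mu)\le m(a,0)=\cS^{N/2}/N$, from which boundedness of $\{\tilde u_\mu\}$ and the limit $|\nabla\tilde u_\mu|_2^2\to\cS^{N/2}$ follow via a weak-convergence argument. You bypass both lemmas and the weak convergence entirely: the two-sided bound on $t_\mu$ comes directly from the Pohozaev identity, the Sobolev and Gagliardo--Nirenberg inequalities, and the single estimate $m(a,\mu)<\cS^{N/2}/N$ already available from Theorem~\ref{thm: main}; once $t_\mu$ is trapped in a compact subset of $(0,\infty)$ and $\mu|u_\mu|_q^q\to 0$, the same inequalities force the limit. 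Your approach is more self-contained and more elementary. The paper's approach, on the other hand, yields the monotonicity of $m(a,\cdot)$ as a byproduct and, through the weak-convergence step, naturally sets up the discussion in Remark~\ref{rem: on gs to 0} on the asymptotic profile of $\tilde u_\mu$.
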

The difference with respect to the $L^2$-subcritical and $L^2$-critical/supercritical cases reflects the different variational characterization of $\tilde u_\mu$. Notice that, while in the former case we have a precise description of the asymptotic behavior of $\tilde u_\mu$ ($\tilde u_\mu$ tends to $0$ strongly in $\cD^{1,2}$ and weakly, but not strongly, in $H^1$), in the second case we can only describe the asymptotic behavior of $\|\nabla \tilde u_\mu\|_{L^2(\R^N)}$. The study of the convergence of $\tilde u_\mu$ represents a delicate problem, and we refer to Remark \ref{rem: on gs to 0} for a more detailed discussion.

\medskip

As in \cite{So}, a special role in the proof of Theorem \ref{thm: main} is played by the \emph{Pohozaev manifold}
\begin{equation}\label{def P}
\cP_{a,\mu} = \left\{u \in S_{a}: P_\mu(u) = 0\right\},
\end{equation}
where
\begin{equation}\label{Poh}
P_\mu(u) := \int_{\R^N} |\nabla u|^2 -  \int_{\R^N} |u|^{2^*} - \mu \gamma_q \int_{\R^N} |u|^q,
\end{equation}
and we recall that $\gamma_q$ was defined in \eqref{def gamma_p}. It is well known that any critical point of $E_\mu|_{S_a}$ stays in $\cP_{a,\mu}$, as a consequence of the Pohozaev identity (see \cite[Lemma 2.7]{Jea}). 
%
%
%
The properties of $\cP_{a,\mu}$ are intimately related to the minimax structure of $E_\mu|_{S_a}$, and in particular to the behavior of $E_\mu$ with respect to dilations preserving the $L^2$-norm (see Section \ref{sec: pre}).
To be more precise, for $u \in S_a$ and $s \in \R$, let
\begin{equation}\label{def star}
(s \star u)(x) := e^{\frac{N}{2}s} u(e^s x), \qquad \text{for a.e. $x \in \R^N$}.
\end{equation}
It results that $s \star u \in S_a$, and hence it is natural to study the \emph{fiber maps}
\begin{equation}\label{def psi}
\Psi^\mu_{u}(s) := E_\mu(s \star u) = \frac{e^{2s}}{2} \int_{\R^N} |\nabla u|^2 - \frac{e^{2^* s}}{2^*} \int_{\R^N} |u|^{2^*} -  \mu \frac{e^{q \gamma_q  s}}{q} \int_{\R^N} |u|^q.
\end{equation}
We will see that monotonicity and convexity properties of $\Psi^\mu_u$ strongly affect the structure of $\cP_{a,\mu}$, and also have a strong impact on properties of the the time-dependent equation \eqref{com nls}. Indeed, it is not difficult to check that $(\Psi_u^\mu)'(s) = P_\mu(s \star u)$, so that $s$ is a critical point of $\Psi_u^\mu$ if and only if $s \star u \in \cP_{a,\mu}$, and in particular $u \in \cP_{a,\mu}$ if and only if $0$ is a critical point of $\Psi_u^\mu$. In this spirit, we consider the decomposition of $\cP$ into the disjoint union $\cP_{a,\mu} = \cP_+^{a,\mu} \cup \cP_0^{a,\mu} \cup \cP_-^{a,\mu}$, where
\begin{equation}\label{split P}
\begin{split}
&\mathcal{P}_+^{a,\mu}  := \left\{ u \in \cP_{a,\mu}: 2|\nabla u|_2^2 > \mu q \gamma_q^2 |u|_q^q + 2^*  |u|_{2^*}^{2^*} \right\} =\left\{ u \in S_a: (\Psi_u^\mu)'(0) = 0, \ (\Psi_u^\mu)''(0) >0\right\}
  \\
&\mathcal{P}_-^{a,\mu}  := \left\{ u \in \cP_{a,\mu}: 2|\nabla u|_2^2 < \mu q \gamma_q^2 |u|_q^q + 2^* |u|_{2^*}^{2^*} \right\} =\left\{ u \in S_a: (\Psi_u^\mu)'(0) = 0, \ (\Psi_u^\mu)''(0) <0\right\}
 \\
&\mathcal{P}_0^{a,\mu}  := \left\{ u \in \cP_{a,\mu}: 2|\nabla u|_2^2 = \mu q \gamma_q^2 |u|_q^q + 2^* |u|_{2^*}^{2^*} \right\} =\left\{ u \in S_a: (\Psi_u^\mu)'(0) = 0, \ (\Psi_u^\mu)''(0) =0\right\}.
\end{split}
\end{equation}
Then we have:

\begin{proposition}\label{prop: struct P}
Suppose that the assumptions of Theorem \ref{thm: main} are satisfied. Then $\cP_{a,\mu}$ is a smooth manifold of codimension $1$ in $S_a$, and is a natural constraint, in the sense that if $u \in \cP_{a,\mu}$ is a critical point for $E_\mu|_{\cP_{a,\mu}}$, then $u$ is a critical point for $E_\mu|_{S_a}$. Furthermore:
\begin{itemize}
\item[1)] If $2<q<2+4/N$, then $\cP_0^{a,\mu}= \emptyset$, both $\cP_+^{a,\mu}$ and $\cP_-^{a,\mu}$ are not empty, and 
\[
m(a,\mu) = \min_{\cP_+^{a,\mu}} E_\mu = \min_{\cP_{a,\mu}} E_\mu < 0
\]
\item[2)] If $2+4/N \le q <2^*$, then $\cP_+^{a,\mu} = \cP_0^{a,\mu}= \emptyset$, and 
\[
m(a,\mu) = \min_{\cP_-^{a,\mu}} E_\mu =  \min_{\cP_{a,\mu}} E_\mu \in \left(0, \frac{\cS^\frac{N}2}N\right).
\]
\end{itemize}
\end{proposition}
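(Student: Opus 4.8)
The plan is to analyze the fiber map $\Psi_u^\mu$ for fixed $u \in S_a$ and use it to dissect $\cP_{a,\mu}$. First I would record that, by the homogeneity displayed in \eqref{def psi}, the critical points of $\Psi_u^\mu$ correspond exactly to $s$ with $s \star u \in \cP_{a,\mu}$, since $(\Psi_u^\mu)'(s) = P_\mu(s\star u)$. Writing $A = |\nabla u|_2^2$, $B = |u|_{2^*}^{2^*}$, $C = \mu |u|_q^q$, we have $\Psi_u^\mu(s) = \tfrac{e^{2s}}{2}A - \tfrac{e^{2^*s}}{2^*}B - \tfrac{e^{q\gamma_q s}}{q}C$, with $q\gamma_q = N(q-2)/2$. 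In the case $2<q<2+4/N$ we have $q\gamma_q < 2 < 2^*$: as $s \to -\infty$ the dominant term is $-\tfrac{e^{q\gamma_q s}}{q}C \to 0^-$ from below and for $s$ very negative the $e^{2s}$ term dominates the $e^{q\gamma_q s}$? No — the smaller exponent $q\gamma_q$ dominates as $s\to-\infty$, so $\Psi_u^\mu(s)\to 0^-$; as $s\to+\infty$ the largest exponent $2^*$ dominates and $\Psi_u^\mu(s)\to-\infty$; since $\Psi_u^\mu(0)$ can be made positive (e.g. for $s$ in a suitable range the $e^{2s}$ term wins), the map has at least a local minimum followed by a local maximum, and a convexity/sign analysis of $(\Psi_u^\mu)''$ (which changes sign at most... once, because the three exponents are ordered $q\gamma_q<2<2^*$) shows there are exactly two critical points, one in $\cP_+$ and one in $\cP_-$, and none in $\cP_0$. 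In the case $2+4/N \le q < 2^*$ we have $2 \le q\gamma_q < 2^*$, so both the $e^{q\gamma_q s}$ and $e^{2^* s}$ terms are concave-dominant; one checks $(\Psi_u^\mu)'' < 0$ whenever $(\Psi_u^\mu)' = 0$ (using \eqref{hp} to control the sign in the critical case $q\gamma_q = 2$), so $\cP_+ = \cP_0 = \emptyset$ and every point of $\cP_{a,\mu}$ lies in $\cP_-$, with $\Psi_u^\mu$ strictly concave near its unique critical point.

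Next I would prove that $\cP_{a,\mu}$ is a smooth codimension-$1$ manifold and a natural constraint. Since $\cP_{a,\mu} = \{u \in S_a : P_\mu(u) = 0\}$, it suffices to show that $dP_\mu(u)$ and $dG(u)$ (where $G(u) = |u|_2^2 - a^2$) are linearly independent on $\cP_{a,\mu}$; equivalently, that $P_\mu|_{S_a}$ has no critical points on $\cP_{a,\mu}$. If $u$ were such a point, then $\langle dP_\mu(u), s\star u\big|_{s=0}\rangle = (\Psi_u^\mu)''(0) = 0$, so $u \in \cP_0^{a,\mu}$; but we have just shown $\cP_0^{a,\mu} = \emptyset$ under the hypotheses, a contradiction. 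The natural-constraint property then follows by the standard Lagrange-multiplier argument (exactly as in \cite[Section 3]{So}): if $dE_\mu|_{\cP_{a,\mu}}(u) = 0$ then $dE_\mu(u) = \nu\, dP_\mu(u) + \lambda\, dG(u)$; testing with $s\star u|_{s=0}$ and using $P_\mu(u) = 0$ gives $\nu (\Psi_u^\mu)''(0) = 0$, hence $\nu = 0$ since $(\Psi_u^\mu)''(0) \ne 0$ off $\cP_0$, so $u$ is a critical point of $E_\mu|_{S_a}$.

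It remains to identify $m(a,\mu)$ with $\min_{\cP} E_\mu$ (and with $\min_{\cP_+}$ or $\min_{\cP_-}$ accordingly), and to establish the sign/size of this minimum. On $\cP_{a,\mu}$ one has $E_\mu(u) = E_\mu(u) - \tfrac1{q\gamma_q} P_\mu(u)$, which after collecting terms is a combination of $|\nabla u|_2^2$ and $|u|_{2^*}^{2^*}$ with signs depending on whether $q\gamma_q$ is less than, equal to, or greater than $2$; in the $L^2$-subcritical case this expression is bounded below and the infimum over $\cP_+^{a,\mu}$ is attained and strictly negative — using $A_k$ and the Gagliardo--Nirenberg inequality one sees $\Psi_u^\mu$ takes negative values near its first (minimum) critical point — whereas in the $L^2$-critical/supercritical case $E_\mu|_{\cP_-}$ is bounded below by a positive constant and the strict upper bound $m(a,\mu) < \cS^{N/2}/N$ comes from testing with rescaled Aubin--Talenti bubbles combined with the hypothesis \eqref{hp}, exactly in the spirit of the Brezis--Nirenberg estimate (this will reuse Proposition \ref{prop: hom} on the homogeneous problem). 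Since every critical point of $E_\mu|_{S_a}$ lies in $\cP_{a,\mu}$ by the Pohozaev identity \cite[Lemma 2.7]{Jea}, we get $m(a,\mu) \ge \inf_{\cP_{a,\mu}} E_\mu$; the reverse inequality will be obtained once Theorem \ref{thm: main} produces a ground state realizing $\inf_{\cP} E_\mu$, so this Proposition is really the structural backbone used there. The main obstacle is the careful sign analysis of $(\Psi_u^\mu)''$ at critical points in the borderline case $q = 2+4/N$, where $q\gamma_q = 2$ and the quadratic and perturbation terms compete at the same scale: here one genuinely needs \eqref{hp}, via the Gagliardo--Nirenberg constant, to force $(\Psi_u^\mu)'' < 0$ and thereby kill $\cP_+$ and $\cP_0$; getting the explicit threshold $\alpha(N,q)$ right is the delicate computational point.
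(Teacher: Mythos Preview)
Your outline tracks the paper's own argument closely: the fiber-map analysis of $\Psi_u^\mu$, the proof that $\cP_0^{a,\mu}=\emptyset$ implies $\cP_{a,\mu}$ is a smooth codimension-$1$ manifold, and the Lagrange-multiplier argument for the natural-constraint property are exactly what the paper does (referring to \cite[Lemma 5.2 and Proposition 1.11]{So}). The identification of $m(a,\mu)$ with $\inf_{\cP_{a,\mu}}E_\mu$ and the bounds on the level are likewise proved in the paper via the lemmas in Sections~\ref{sec: sub}--\ref{sec: sup}, just as you sketch.

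There is, however, a genuine gap in your subcritical analysis. You write that $(\Psi_u^\mu)''$ ``changes sign at most once, because the three exponents are ordered $q\gamma_q<2<2^*$'', and deduce from this that there are exactly two critical points and $\cP_0=\emptyset$. Neither claim follows without \eqref{hp}. First, the equation $(\Psi_u^\mu)''(s)=0$ reads $2A = 2^* e^{(2^*-2)s}B + q\gamma_q^2 e^{(q\gamma_q-2)s}C$, whose right side is a sum of a strictly increasing and a strictly decreasing function and can therefore meet the level $2A$ in zero, one, or two points --- so $(\Psi_u^\mu)''$ can change sign twice, not once. Second, and more importantly, without \eqref{hp} the fiber map need not take positive values at all for every $u\in S_a$: the equation $(\Psi_u^\mu)'(s)=0$, rewritten as $A = e^{(2^*-2)s}B + \gamma_q e^{(q\gamma_q-2)s}C$, may have no solutions if $A$ lies below the minimum of the right side, in which case $\Psi_u^\mu$ is strictly decreasing and the fiber through $u$ misses $\cP$ entirely. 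The paper handles this by comparing $\Psi_u^\mu(s)$ with the auxiliary function $h(t)$ of Lemma~\ref{lem: struct h} via Gagliardo--Nirenberg and Sobolev, and it is precisely the bound $\mu a^{(1-\gamma_q)q}<C'$ in \eqref{hp} that forces $h$ to have a positive maximum, hence $\Psi_u^\mu$ to cross zero and to have exactly two critical points; the same bound is then used (as in \cite[Lemma 5.2]{So}) to rule out $\cP_0$. So your final remark that \eqref{hp} is ``genuinely needed'' only at the borderline $q=\bar p$ is incorrect: it is equally essential throughout the subcritical range for the very structure of $\cP_{a,\mu}$ you describe.
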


This explains the discontinuity of the ground state level $m(a,\mu)$ as $q$ passes from the $L^2$-subcritical to the $L^2$-critical or supercritical setting: the introduction of the subcritical perturbation creates a new component $\cP_+^{a,\mu}$ in $\cP_{a,\mu}$, and the minimizer on $\cP_{a,\mu}$ is achieved exactly on this component.

In the same way, Proposition \ref{prop: struct P} also gives a new interpretation of Theorem \ref{thm: mu to 0}: for the homogeneous problem $\mu=0$, it is not difficult to prove that $\cP_{a,0} = \cP^{a,0}_-$, that $\inf_{\cP_{a,0}} E_0 = \cS^\frac{N}2/N$, and that the infimum is achieved by a ground state if and only if $N \ge 5$ (see Proposition \ref{prop: hom} below). Therefore, the introduction of a $L^2$-subcritical perturbation creates a discontinuity for the ground state energy level $m(a,\mu)$ as $\mu \to 0^+$ whenever $N \ge 5$, and more in general it creates a discontinuity of the level $\inf_{\cP_{a,\mu}} E_\mu$ as $\mu \to 0^+$, in any dimension. This phenomenon was already observed in the Sobolev subcritical case (see \cite{So}), and also, in a different form, in \cite{BeJe}, where a different equation is considered, and the discontinuity was created by the introduction of a trapping potential.


\begin{remark}
As already observed in \cite{So}, the change of the topology in $\cP_{a,\mu}$ obtained by the introduction of a focusing $L^2$-subcritical perturbation reflects what happens to the Nehari manifold in inhomogeneous elliptic problems \cite{Tar}, or in elliptic problems with concave-convex nonlinearities \cite{AmbBreCer, GarPer}. This is somehow surprising, since in \eqref{stat com} all the power-nonlinearities are super-linear; the phenomenon is a direct consequence of the $L^2$-constraint $S_a$, and of the behavior of $E_\mu$ with respect to $L^2$-norm-preserving dilations. Similar ``concave" effects in superlinear problems with $L^2$-constraint were already observed in \cite[Theorem 1.6]{BeJe}, and in the main results of \cite{GoJe, JeLuWa}.
\end{remark}

Lastly, we consider the impact of the study of the function $\Psi_u^\mu$ on the occurrence of finite-time blow-up. Following the same strategy developed in \cite[Theorem 1.13]{So}, we can easily obtain the following result:

\begin{theorem}\label{thm: gwp}
Let us assume that the assumptions of Theorem \ref{thm: main} are satisfied. Let $u \in S_a$ be such that $E_\mu(u) < \inf_{\cP_-^{a,\mu}} E_\mu$. Then $\Psi^\mu_u$ has a unique global maximum point $t_{u,\mu}$, and, if $t_{u,\mu}<0$ and $|x| u \in L^2(\R^N,\C)$, then the solution $\psi$ of \eqref{com nls} with initial datum $u$ blows-up in finite time.
\end{theorem}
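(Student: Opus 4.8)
The plan is to follow the classical virial/convexity scheme for blow-up, combined with the analysis of the fiber map $\Psi^\mu_u$. First I would establish the structure of $\Psi^\mu_u$ under the hypothesis $E_\mu(u) < \inf_{\cP_-^{a,\mu}} E_\mu$. From \eqref{def psi}, $\Psi^\mu_u(s)$ is a combination of $e^{2s}$, $-e^{2^*s}$ and $-\mu e^{q\gamma_q s}$ with $q\gamma_q \le 2 < 2^*$ (recall $\gamma_q \le 1$ since $q \le 2^*$, and $q\gamma_q = N(q-2)/2$; when $2+4/N \le q < 2^*$ we have $q\gamma_q \ge 2$, but in that regime $\cP_+^{a,\mu} = \cP_0^{a,\mu} = \emptyset$ by Proposition \ref{prop: struct P}, so $\Psi^\mu_u$ is strictly concave and $\Psi^\mu_u(s) \to -\infty$ as $s \to \pm\infty$); in all cases $\Psi^\mu_u(s) \to -\infty$ as $s \to +\infty$ and one checks $\Psi^\mu_u(s) \to 0^-$ or $-\infty$ as $s \to -\infty$, so $\Psi^\mu_u$ attains a global maximum. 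Critical points of $\Psi^\mu_u$ correspond exactly to dilations $s\star u \in \cP_{a,\mu}$, with $s$ being a strict local max precisely when $s \star u \in \cP_-^{a,\mu}$. Since any interior critical point that is not a maximum would force, by the shape of the three exponentials, the existence of both a $\cP_+$-type and a $\cP_-$-type critical point, and the $\cP_-$-critical value is $\ge \inf_{\cP_-^{a,\mu}} E_\mu > E_\mu(u) = \Psi^\mu_u(0)$, the constraint $E_\mu(u) < \inf_{\cP_-^{a,\mu}} E_\mu$ rules out the two-critical-point configuration; hence $\Psi^\mu_u$ has a \emph{unique} critical point $t_{u,\mu}$, which is its global maximum.

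Next I would invoke the virial identity. Since $u \in S_a$ with $|x| u \in L^2$, the solution $\psi(t)$ of \eqref{com nls} with $\psi(0) = u$ satisfies, on its maximal interval, the standard localized virial identity
\[
\frac{d^2}{dt^2} \int_{\R^N} |x|^2 |\psi(t)|^2\, dx = 8 P_\mu(\psi(t)),
\]
where $P_\mu$ is exactly the Pohozaev functional in \eqref{Poh} (the coefficients $2^*$ and $\mu\gamma_q$ in $P_\mu$ are precisely those produced by the virial computation for the $|u|^{2^*}$ and $|u|^q$ terms). The conserved quantities are $|\psi(t)|_2 = a$ and $E_\mu(\psi(t)) = E_\mu(u)$. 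The key point is that $P_\mu(\psi(t)) = (\Psi^\mu_{\psi(t)})'(0)$, and one has the elementary relation $\Psi^\mu_v(0) = E_\mu(v)$, $(\Psi^\mu_v)'(0) = P_\mu(v)$. Using the uniqueness of the maximum point of each fiber map together with $E_\mu(\psi(t)) < \inf_{\cP_-^{a,\mu}} E_\mu$, I would show the set $\{v \in S_a : E_\mu(v) < \inf_{\cP_-^{a,\mu}} E_\mu,\ P_\mu(v) < 0\}$ (equivalently, $t_{v,\mu} < 0$) is invariant under the flow, and that on this set $P_\mu(v) \le -\delta < 0$ for some uniform $\delta$; the hypothesis $t_{u,\mu} < 0$ places $u$ in this set. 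The standard way to get the uniform bound is to compare $E_\mu(v)$ with $\max_s \Psi^\mu_v(s) = \Psi^\mu_v(t_{v,\mu})$ and use that the gap $\inf_{\cP_-^{a,\mu}} E_\mu - E_\mu(v)$ controls $|P_\mu(v)|$ from below away from the maximum — a monotonicity argument on the concave branch of $\Psi^\mu_v$ to the left of $t_{v,\mu}$.

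Finally, with $\frac{d^2}{dt^2}\int |x|^2 |\psi(t)|^2 = 8 P_\mu(\psi(t)) \le -8\delta < 0$ as long as the solution exists, the nonnegative function $t \mapsto \int |x|^2|\psi(t)|^2$ is strictly concave and would become negative in finite time if the solution existed globally forward — a contradiction, so $T_{\max} < \infty$, i.e. $\psi$ blows up in finite time. The main obstacle I anticipate is not the virial step (which is routine once $|x|u \in L^2$ is assumed, modulo the usual truncation/approximation to justify differentiating under the integral at the $H^1$ regularity level — here I would cite \cite{Caz} as in \cite{So}), but rather the \emph{invariance of the set} $\{P_\mu < 0,\ E_\mu < \inf_{\cP_-^{a,\mu}} E_\mu\}$ and the uniform bound $P_\mu \le -\delta$: one must rule out that the flow crosses $\cP_{a,\mu}$, which is exactly where the uniqueness of $t_{v,\mu}$ and the strict energy gap are used, and care is needed because $\inf_{\cP_-^{a,\mu}} E_\mu$ is not a priori attained along the flow. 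This is handled precisely as in \cite[Theorem 1.13]{So}, which is why the statement says the result follows "easily" by the same strategy.
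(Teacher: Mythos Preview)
Your overall strategy --- virial identity, invariance of the set $\{E_\mu < \inf_{\cP_-} E_\mu,\ P_\mu<0\}$, uniform negative bound on $P_\mu$ along the flow, and concavity of the variance --- is exactly the paper's approach (which simply cites \cite[Theorem 1.13]{So} and the virial identity from \cite{TaoVisZha}). The second and third paragraphs of your proposal are correct and complete.

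The first paragraph, however, contains several wrong claims. In the $L^2$-critical and $L^2$-supercritical regimes $2+4/N\le q<2^*$, the fiber map is \emph{not} strictly concave and does \emph{not} tend to $-\infty$ as $s\to-\infty$: the dominant term at $-\infty$ is $\tfrac12 e^{2s}|\nabla u|_2^2$ (or, in the $L^2$-critical case, $(\tfrac12|\nabla u|_2^2-\tfrac{\mu}{\bar p}|u|_{\bar p}^{\bar p})e^{2s}$, positive under \eqref{hp}), so $\Psi_u^\mu(s)\to 0^+$; the fact that $\cP_+=\cP_0=\emptyset$ only tells you that every critical point is a strict local max, not that $\Psi_u^\mu$ is globally concave. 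More importantly, in the $L^2$-subcritical case $2<q<2+4/N$ the fiber map always has \emph{two} critical points $s_u<t_u$ (a local min in $\cP_+$ and the global max in $\cP_-$), regardless of the value of $E_\mu(u)$: the energy constraint $E_\mu(u)<\inf_{\cP_-}E_\mu$ does \emph{not} ``rule out the two-critical-point configuration'' as you claim, and your conclusion ``$\Psi_u^\mu$ has a unique critical point'' is false in that regime.

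Fortunately, the theorem only asserts a unique \emph{global maximum} $t_{u,\mu}$, and this (together with the crucial facts that $\Psi_u^\mu$ is strictly decreasing and concave on $(t_{u,\mu},+\infty)$, and that $t_{u,\mu}<0\Rightarrow P_\mu(u)<0$) follows directly from Lemmas~\ref{lem: fiber sub}, \ref{lem: fiber cr}, \ref{lem: fiber sup} with no reference to the energy level --- this is exactly what the paper invokes. You should replace your first paragraph by a direct appeal to those lemmas; the energy hypothesis enters only where you already said it does, namely in the invariance of the set and the uniform bound $P_\mu\le -\delta$.
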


As immediate consequence:

\begin{corollary}\label{cor: gwp}
Assume that the assumptions of Theorems \ref{thm: main} are satisfied, and, for $u \in S_a$, let $\psi_u$ be the solution to \eqref{com nls} with initial datum $u$. We have:
\begin{itemize}
\item[1)] If $|x| u \in L^2(\R^N,\C)$, then there exist $\bar s \in \R$ such that
\[
s> \bar s \quad \implies \quad \text{$\psi_{s \star u}$ blows-up in finite time}.
\]
\item[2)] If $2+4/N \le q < 2^*$, $|x| u \in L^2(\R^N,\C)$, $E_\mu(u) <  m(a,\mu)$, and $P_\mu(u)<0$, then $\psi_u$ blows-up in finite time.
\item[3)] If $2<q<2+4/N$, $|x| u \in L^2(\R^N,\C)$ and $E_\mu(u) < m(a,\mu)$, then $\psi_u$ blows-up in finite time.
\end{itemize}
\end{corollary}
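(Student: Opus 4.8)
The plan is to deduce all three items from Theorem \ref{thm: gwp}: in each case it suffices to verify, for a suitable function, the two hypotheses of that theorem, namely that its energy is strictly below $\inf_{\cP_-^{a,\mu}} E_\mu$ and that the global maximum point of the associated fiber map $\Psi^\mu_u$ of \eqref{def psi} is negative. Throughout I would use the elementary identities $E_\mu(s\star u)=\Psi^\mu_u(s)$ and $\frac{d}{ds}E_\mu(s\star u)=P_\mu(s\star u)=(\Psi^\mu_u)'(s)$ — so, recalling \eqref{def P} and \eqref{Poh}, $s\star u\in\cP_{a,\mu}$ precisely when $s$ is a critical point of $\Psi^\mu_u$ — together with the translation rule $\Psi^\mu_{s\star u}(\cdot)=\Psi^\mu_u(\cdot+s)$ coming from \eqref{def star}, and the scaling relation $\int_{\R^N}|x|^2|s\star u|^2=e^{-2s}\int_{\R^N}|x|^2|u|^2$, which shows that the finite-variance condition $|x|u\in L^2(\R^N,\C)$ is preserved by the action \eqref{def star}. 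I also record that, by Proposition \ref{prop: struct P}, in all cases $\inf_{\cP_-^{a,\mu}}E_\mu\ge\min_{\cP_{a,\mu}}E_\mu=m(a,\mu)\in\R$.

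For item 1) I would apply Theorem \ref{thm: gwp} to $s\star u$ with $s$ large. Since $\Psi^\mu_u$ is smooth and $\Psi^\mu_u(s)=E_\mu(s\star u)\to-\infty$ as $s\to+\infty$ (the term $e^{2^*s}|u|_{2^*}^{2^*}$ dominates), the map $\Psi^\mu_u$ is bounded above and its set of maximizers is contained in some half-line $(-\infty,M]$. Hence, for all $s$ large enough, one has simultaneously $E_\mu(s\star u)<\inf_{\cP_-^{a,\mu}}E_\mu$ and, because $\Psi^\mu_{s\star u}(\cdot)=\Psi^\mu_u(\cdot+s)$, every global maximum point of $\Psi^\mu_{s\star u}$ lies in $(-\infty,M-s]$ and is therefore negative; Theorem \ref{thm: gwp} then yields finite-time blow-up of $\psi_{s\star u}$, which is the claim with $\bar s$ any such threshold.

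For items 2) and 3) the energy hypothesis is immediate: by Proposition \ref{prop: struct P}, $\inf_{\cP_-^{a,\mu}}E_\mu=m(a,\mu)$ when $2+4/N\le q<2^*$, while $\inf_{\cP_-^{a,\mu}}E_\mu\ge m(a,\mu)$ when $2<q<2+4/N$ (since $\cP_-^{a,\mu}\subseteq\cP_{a,\mu}$), so in both cases $E_\mu(u)<m(a,\mu)$ gives $E_\mu(u)<\inf_{\cP_-^{a,\mu}}E_\mu$, and Theorem \ref{thm: gwp} supplies a unique global maximum point $t_{u,\mu}$ of $\Psi^\mu_u$; it remains to show $t_{u,\mu}<0$. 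In item 2), under \eqref{hp} the fiber map $\Psi^\mu_u$ is strictly increasing on $(-\infty,s^*)$ and strictly decreasing on $(s^*,\infty)$, with $s^*$ its only critical point — this is the computation underlying $\cP_+^{a,\mu}=\cP_0^{a,\mu}=\emptyset$ in Proposition \ref{prop: struct P}, with \eqref{hp} entering only when $q=2+4/N$, through the Gagliardo--Nirenberg inequality, to dominate $\mu|u|_q^q$ by $|\nabla u|_2^2$. Thus $t_{u,\mu}=s^*$, and since $(\Psi^\mu_u)'(0)=P_\mu(u)<0$ by hypothesis, $0$ belongs to the region of strict decrease, i.e. $t_{u,\mu}<0$, and Theorem \ref{thm: gwp} applies.

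For item 3), when $2<q<2+4/N$ the existence of the global maximum point $t_{u,\mu}$ forces $\Psi^\mu_u$ to have exactly two critical points $s_1<s_2$, with $(\Psi^\mu_u)'$ of sign $-,+,-$ on the three intervals they determine: writing $t=e^s$, the sign of $(\Psi^\mu_u)'(s)$ equals that of $t^{\,2-q\gamma_q}|\nabla u|_2^2-t^{\,2^*-q\gamma_q}|u|_{2^*}^{2^*}-\mu\gamma_q|u|_q^q$, which is negative as $t\to0^+$, strictly increasing then strictly decreasing, and $\to-\infty$ as $t\to+\infty$; were it nowhere positive, $\Psi^\mu_u$ would be nonincreasing and, being $\to0^-$ as $s\to-\infty$, would have no maximum point. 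Therefore $t_{u,\mu}=s_2$, the local minimum $s_1$ being excluded. Now $s_1\star u\in\cP_{a,\mu}$, so $\Psi^\mu_u(s_1)=E_\mu(s_1\star u)\ge m(a,\mu)>E_\mu(u)=\Psi^\mu_u(0)$; if $s_2\ge0$ this yields a contradiction in each possible position of $s=0$ (if $s_1=0$ or $s_2=0$ then $u\in\cP_{a,\mu}$ and $E_\mu(u)\ge m(a,\mu)$; if $0<s_1$ then $\Psi^\mu_u$ is strictly decreasing on $(-\infty,s_1)$, so $\Psi^\mu_u(0)>\Psi^\mu_u(s_1)$; if $s_1<0<s_2$ then $\Psi^\mu_u$ is strictly increasing on $(s_1,s_2)$, so again $\Psi^\mu_u(0)>\Psi^\mu_u(s_1)$). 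Hence $t_{u,\mu}=s_2<0$ and Theorem \ref{thm: gwp} applies. I expect this last location step to be the main obstacle: since $\Psi^\mu_u$ is genuinely non-monotone here, placing $s=0$ to the right of $s_2$ cannot be read off the shape of $\Psi^\mu_u$ and requires feeding in the sharp identity $\min_{\cP_{a,\mu}}E_\mu=m(a,\mu)$ of Proposition \ref{prop: struct P}, both at the local minimum $s_1\star u$ and to rule out $u\in\cP_{a,\mu}$; the remaining ingredients — the fiber-map identities, the reduction to Theorem \ref{thm: gwp}, and the scaling in item 1) — are routine.
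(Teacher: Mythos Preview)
Your proposal is correct and follows precisely the approach the paper implicitly intends: the paper states the corollary as an ``immediate consequence'' of Theorem~\ref{thm: gwp} without giving details, and your derivation---verifying in each case the energy condition $E_\mu<\inf_{\cP_-^{a,\mu}}E_\mu$ and the sign condition $t_{u,\mu}<0$ via the fiber-map structure---is exactly how that consequence unpacks. The only remark is that for item~3) you re-derive from scratch the two-critical-point structure of $\Psi^\mu_u$; this is already available as Lemma~\ref{lem: fiber sub} (points 1--3), so you could shorten the argument by citing it directly, but your self-contained version is correct as written.
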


With respect to \cite{So}, in Theorem \ref{thm: gwp} we cannot ensure global existence if $t_{u,\mu}>0$. This is another consequence of the fact that the classical blow-up alternative seems unavailable in the Sobolev critical setting. 

For other results regarding global existence, finite time blow-up, scattering and other dynamical properties in the Sobolev critical setting, we refer the reader to \cite{AkaIbrKikNaw3, ChMiZh, KiOhPoVi, MiXuZh, MiaZhaZhe, TaoVisZha} and references therein.



\medskip

\noindent \textbf{Structure of the paper, and notation.} In Section \ref{sec: pre} we collect some preliminary results which will often be used in the rest the paper. Section \ref{sec: compactness} contains the discussion of the compactness of Palais-Smale sequences. The proof of our main existence result, Theorem \ref{thm: main}, is given in Sections \ref{sec: sub}, \ref{sec: cr} and \ref{sec: sup}, which contain the $L^2$-subcritical, $L^2$-critical, and $L^2$-supercritical cases, respectively. The non-existence results in the defocusing case, Theorem \ref{thm: non-ex}, is proved in Section \ref{sec: non ex}. The asymptotic behavior of the ground state energy level as $\mu \to 0^+$, Theorem \ref{thm: mu to 0}, is discussed in Section \ref{sec: mu to 0}. Once that Theorem \ref{thm: main} is established, the proofs of Theorems \ref{thm: inst}, \ref{thm: gwp} and of Proposition \ref{prop: struct P} are very similar to the analogue results in the Sobolev subcritical case \cite[Theorems 1.7 and 1.13]{So}, and hence they are only briefly sketched in Section \ref{sec: additional}. 
\medskip

Regarding the notation, in this paper we deal with both complex and real-valued functions, which will be in both cases denoted by $u, v, \dots$. This should not be a source of misunderstanding. The symbol $\bar u$ will always be used for the complex conjugate of $u$. For $p \ge 1$, the (standard) $L^p$-norm of $u \in L^p(\R^N,\C)$ (or of $u \in L^p(\R^N,\R)$) is denoted by $|u|_p$. We simply write $H$ for $H^1(\R^N,\C)$, and $H^1$ for the subspace of real valued functions $H^1(\R^N,\R)$. Similarly, $H^1_{\rad}$ denotes the subspace of functions in $H^1$ which are radially symmetric with respect to $0$, and $S_{a,r} = H^1_{\rad} \cap S_a$. The symbol $\|\cdot\|$ is used only for the norm in $H$ or $H^1$. Denoting by $^*$ the symmetric decreasing rearrangement of a $H^1$ function, we recall that, if $u \in H$, then $|u| \in H^1$, $|u|^* \in H^1_{\rad}$, with 
\[
|\nabla |u|^*|_2 \le |\nabla |u||_2 \le |\nabla u|_2
\]
(it is well known that the symmetric decreasing rearrangement decreases the $L^2$-norm of gradients; regarding the last inequality for complex valued functions, we refer to \cite[Proposition 2.2]{HajStu}). The symbol $\weak$ denotes weak convergence (typically in $H$ or $H^1$). Capital letters $C, C_1, C_2, \dots$ denote positive constants which may depend on $N$, $p$ and $q$ (but never on $a$ or $\mu$), whose precise value can change from line to line. We also mention that, within a section, after having fixed the parameters $a$ and $\mu$, we often choose to omit the dependence of $E_\mu$, $S_a$, $P_\mu$, $\cP_{a,\mu}$, \dots on these quantities, writing simply $E$, $S$, $P$, $\cP$, \dots.

\section{Preliminaries}\label{sec: pre}

In this section we collect several results which will be often used throughout the rest of the paper.

\subsection*{Sobolev inequality and Gagliardo-Nirenberg inequality} For every $N \ge 3$, there exists an optimal constant $\cS>0$ depending only on $N$ such that
\begin{equation}\label{Sob ineq}
\cS |u|_{2^*}^2 \le |\nabla u|_2^2 \qquad \forall u \in \mathcal{D}^{1,2}(\R^N) \qquad \text{(Sobolev inequality)}
\end{equation}
where $\mathcal{D}^{1,2}(\R^N)$ denotes the completion of $C^\infty_c(\R^N)$ with respect to the norm $\|u\|_{\mathcal{D}^{1,2}} := |\nabla u|_2$. It is well known \cite{Tal} that the optimal constant is achieved by (any multiple of) 
\begin{equation}\label{def bubble}
U_{\eps,y}(x) = [N(N-2)]^\frac{N-2}{4} \left( \frac{\eps}{\eps^2 + |x-y|^2}\right)^\frac{N-2}2, \qquad \eps>0, \ y \in \R^N,
\end{equation}
which are the only positive classical solutions to the critical Lane-Emden equation
\begin{equation}\label{cr LE}
-\Delta w = w^{2^*-1}, \quad w>0 \qquad \text{in $\R^N$}.
\end{equation}

If $p \in (2,2^*)$, we also recall that there exists an optimal constant $C_{N,p}$ depending on $N$ and on $p$ such that
\begin{equation}\label{GN ineq}
|u|_p \le C_{N,p} |\nabla u|_2^{\gamma_p} |u|_2^{1-\gamma_p} \qquad \forall u \in H \qquad \text{(Gagliardo-Nirenberg inequality)}
\end{equation}
where $\gamma_p$ is defined by \eqref{def gamma_p}.
It is convenient to observe that
\[
\gamma_p p \begin{cases} <2 & \text{if }2<p<\bar p \\=2 & \text{if $p= \bar p$} \\
>2 & \text{if $\bar p < p < 2^*$},
\end{cases} \quad \text{and that} \quad \gamma_{2^*} = 1.
\]
We can then see \eqref{Sob ineq} as a particular case of \eqref{GN ineq}, with $C_{N, 2^*} = \cS^{-1/2}$.

\subsection*{Behavior of $E_\mu$ with respect to dilations.} As in most of the papers regarding normalized solutions of Schr\"odinger-type equations in $\R^N$, the study of the behavior of $E_\mu$ with respect to the $L^2$-norm preserving variations is crucial. We consider, for $u \in S_a$ and $s \in \R$, the fiber $\Psi_u^\mu$ introduced in \eqref{def psi}, and note that
\begin{equation*}
\begin{split}
(\Psi^\mu_u)'(s) &=  e^{2s}\int_{\R^N} |\nabla u|^2 - e^{2^*  s} \int_{\R^N} |u|^{2^*} -  \mu \gamma_q e^{q \gamma_q  s} \int_{\R^N} |u|^q \\
& = \int_{\R^N} |\nabla (s \star u)|^2 -  \int_{\R^N} |s \star u|^{2^*} - \mu \gamma_q \int_{\R^N} |s \star u|^q = P_\mu(s \star u),
\end{split}
\end{equation*}
where $P_\mu$ is defined by \eqref{Poh}. Therefore, as anticipated in the introduction, we have:
\begin{proposition}\label{prop: psi P}
Let $u \in S_a$. Then $s \in \R$ is a critical point for $\Psi^\mu_u$ if and only if $s \star u \in \cP_{a,\mu}$. 
\end{proposition}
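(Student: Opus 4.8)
The statement is an immediate consequence of the computation displayed just before the proposition, so the "proof" really amounts to organizing that observation. The plan is to use two facts: first, that the dilation $s \star u$ preserves the $L^2$-norm, so that $s \star u \in S_a$ for every $s \in \R$ and hence the membership $s \star u \in \cP_{a,\mu}$ reduces to the single scalar condition $P_\mu(s \star u) = 0$; second, that $\Psi_u^\mu \colon \R \to \R$ is a $C^1$ (indeed smooth, being a finite sum of exponentials in $s$ with fixed coefficients) function of one real variable, so that $s$ being a critical point of $\Psi_u^\mu$ is precisely the condition $(\Psi_u^\mu)'(s) = 0$.

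\textbf{Key steps, in order.} First I would recall from \eqref{def star} that $|s \star u|_2 = |u|_2 = a$, so $s \star u \in S_a$; therefore $s \star u \in \cP_{a,\mu}$ if and only if $P_\mu(s \star u) = 0$, by the definition \eqref{def P} of $\cP_{a,\mu}$. Second, I would differentiate the explicit formula for $\Psi_u^\mu(s)$ given in \eqref{def psi} term by term in $s$, obtaining
\[
(\Psi_u^\mu)'(s) = e^{2s}\int_{\R^N}|\nabla u|^2 - e^{2^* s}\int_{\R^N}|u|^{2^*} - \mu\gamma_q e^{q\gamma_q s}\int_{\R^N}|u|^q,
\]
and then recognize, using $|\nabla(s\star u)|_2^2 = e^{2s}|\nabla u|_2^2$ and $|s\star u|_p^p = e^{(p\gamma_p)s - \text{(absorbed constants)}}$— more precisely using the identities $\int|\nabla(s\star u)|^2 = e^{2s}\int|\nabla u|^2$, $\int|s\star u|^{2^*} = e^{2^*s}\int|u|^{2^*}$, $\int|s\star u|^q = e^{q\gamma_q s}\int|u|^q$ that follow directly from the change of variables $y = e^s x$ — that the right-hand side equals $P_\mu(s \star u)$ in the notation of \eqref{Poh}. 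Third, I would conclude: $s$ is a critical point of $\Psi_u^\mu$ $\iff$ $(\Psi_u^\mu)'(s) = 0$ $\iff$ $P_\mu(s\star u) = 0$ $\iff$ $s \star u \in \cP_{a,\mu}$.

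\textbf{Main obstacle.} There is essentially no obstacle here: the only points requiring any care are the bookkeeping of the scaling exponents (checking that $\int|s\star u|^q$ picks up exactly the factor $e^{q\gamma_q s}$, which is where the precise definition $\gamma_q = N(q-2)/(2q)$ enters) and the remark that differentiation under the integral sign / term-by-term differentiation of $\Psi_u^\mu$ is legitimate because, once $u \in S_a \subset H^1$ is fixed, $\Psi_u^\mu(s)$ is literally a linear combination of $e^{2s}$, $e^{2^*s}$, $e^{q\gamma_q s}$ with constant (finite, by Sobolev and Gagliardo--Nirenberg) coefficients. So the proof is short; it merely records that the fiber map's derivative is the Pohozaev functional evaluated along the dilation orbit.
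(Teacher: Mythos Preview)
Your proposal is correct and follows exactly the paper's approach: the paper presents the identity $(\Psi_u^\mu)'(s) = P_\mu(s\star u)$ via the displayed computation immediately preceding the proposition, and then states the proposition as an immediate consequence (``Therefore:''), which is precisely what you do. Your additional remark that $s\star u \in S_a$ (so that membership in $\cP_{a,\mu}$ reduces to $P_\mu(s\star u)=0$) is implicit in the paper's setup and makes the logic explicit.
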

In particular, $u \in \cP_{a,\mu}$ if and only if $0$ is a critical point of $\Psi_u^\mu$: $\cP_{a,\mu} = \{u \in S_a: \Psi_u'(0)=0\}$. For future convenience, we also recall (see \cite[Lemma 3.5]{BaSo2}) that the map
\begin{equation}\label{cont star}
(s,u) \in \R \times H^1 \mapsto (s \star u) \in H^1 \quad \text{is continuous}.
\end{equation}

\subsection*{The homogeneous Sobolev critical NLSE} We focus here on the case $\mu=0$, and in particular to existence and properties of ground states for 
\[
E_0(u) = \int_{\R^N} \left(\frac12 |\nabla u|^2 - \frac1{2^*} |u|^{2^*}\right)
\]
on $S_a$. The associated Pohozaev manifold is 
\[
\cP_{a,0} = \{u \in S_a: \ |\nabla u|_2^2 = |u|_{2^*}^{2^*}\} = \{u \in S_a: (\Psi_u^0)'(0) = 0\}
\]
with $\Psi_u^0$ defined by \eqref{def psi} for $\mu=0$:
\begin{equation}\label{def psi 0}
\Psi_u^0(s) = E_0(s \star u) = \frac{e^{2s}}{2} |\nabla u|_2^2 - \frac{e^{2^* s}}{2^*} |u|_{2^*}^{2^*}.
\end{equation}
We also recall the decomposition $\cP_{a,0} = \cP_+^{a,0} \cup \cP_0^{a,0} \cup \cP_{-}^{a,0}$, introduced in \eqref{split P}. 

\begin{proposition}\label{prop: hom}
Let $N \ge 3$ and $a>0$. Then $\cP_{a,0}$ is a smooth manifold of codimension $1$ in $S_a$, $\cP_{a,0} = \cP_{a,0}^-$, and
\begin{equation}\label{min P 0}
\inf_{u \in \cP_{a,0}} E_0(u) = \inf_{u \in S_{a}} \max_{s \in \R} E_0(s \star u) = \frac{1}{N} \cS^\frac{N}2.
\end{equation}
Moreover:
\begin{itemize}
\item[1)] If $N \ge 5$, then the infimum is achieved, and $E_0$ on $S_a$ has a unique positive radial ground state, given by $U_{\eps,0}$ defined in \eqref{def bubble} for the unique choice of $\eps>0$ which gives $\|U_{\eps,0}\|_{L^2(\R^N)} = a$. The function $U_{\eps,0}$ solves \eqref{stat com} for $\lambda=\mu=0$. 
\item[2)] If $N=3,4$, then \eqref{stat com} with $\mu=0$ has no positive solution in $S_a$, for any $\lambda \in \R$, and in particular $\inf_{\cP_{a,\mu}} E_0$ is not achieved.
\end{itemize}
\end{proposition}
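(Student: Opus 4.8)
The plan is to first study the dilation fibers $s\mapsto\Psi_u^0(s)=E_0(s\star u)$ given by \eqref{def psi 0}, and read off from their shape both the manifold structure of $\cP_{a,0}$ and the equality $\cP_{a,0}=\cP_{a,0}^-$; then to identify the minimax level; and finally to split into the case $N\ge 5$, where the Aubin--Talenti bubbles are square-integrable, and the case $N=3,4$, where they are not. For the first step, fix $u\in S_a$ (so $u\not\equiv0$) and write $A:=|\nabla u|_2^2>0$, $B:=|u|_{2^*}^{2^*}>0$. Then $(\Psi_u^0)'(s)=e^{2s}A-e^{2^*s}B$ vanishes exactly at $s_u:=\frac{1}{2^*-2}\log(A/B)$, where $(\Psi_u^0)''(s_u)=(2-2^*)e^{2s_u}A<0$; since moreover $\Psi_u^0(s)\to0^+$ as $s\to-\infty$ and $\Psi_u^0(s)\to-\infty$ as $s\to+\infty$, the point $s_u$ is the unique, strict, global maximum of $\Psi_u^0$. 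By Proposition \ref{prop: psi P} this gives $\cP_{a,0}=\{s_u\star u:\ u\in S_a\}$ and, reading off the sign of the second derivative, $\cP_0^{a,0}=\cP_+^{a,0}=\emptyset$, i.e. $\cP_{a,0}=\cP_-^{a,0}$. Finally, for $u\in\cP_{a,0}$ the curve $s\mapsto s\star u$ stays in $S_a$ and $\frac{d}{ds}P_0(s\star u)|_{s=0}=(\Psi_u^0)''(0)<0$, so $dP_0$ restricted to $T_uS_a$ is nonzero; hence $0$ is a regular value of $P_0|_{S_a}$ and $\cP_{a,0}$ is a smooth codimension-$1$ submanifold of $S_a$.

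For the minimax level, since $\max_{s}E_0(s\star u)=E_0(s_u\star u)$ with $s_u\star u\in\cP_{a,0}$, and every $v\in\cP_{a,0}$ has $s_v=0$ so that $\max_sE_0(s\star v)=E_0(v)$, both infima in \eqref{min P 0} equal $\inf_{\cP_{a,0}}E_0$. On $\cP_{a,0}$ one has $|\nabla u|_2^2=|u|_{2^*}^{2^*}$, hence $E_0(u)=(\tfrac12-\tfrac1{2^*})|\nabla u|_2^2=\tfrac1N|\nabla u|_2^2$, and inserting $|u|_{2^*}^{2^*}=|\nabla u|_2^2$ into \eqref{Sob ineq} yields $|\nabla u|_2^2\ge\cS^{N/2}$; thus $\inf_{\cP_{a,0}}E_0\ge\tfrac1N\cS^{N/2}$. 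For the reverse inequality a direct optimization in $s$ gives $\max_sE_0(s\star u)=\tfrac1N\big(|\nabla u|_2^2/|u|_{2^*}^2\big)^{N/2}$, which is invariant under $u\mapsto cu$; testing with the $L^2$-normalization of the truncated bubble $\chi U_{\eps,0}$ ($\chi\in C_c^\infty$, $\chi\equiv1$ near $0$) and using the classical Brezis--Nirenberg expansions $|\nabla(\chi U_{\eps,0})|_2^2=\cS^{N/2}+O(\eps^{N-2})$ and $|\chi U_{\eps,0}|_{2^*}^{2^*}=\cS^{N/2}+O(\eps^N)$, this quotient tends to $\cS$ as $\eps\to0^+$. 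Hence $\inf_{\cP_{a,0}}E_0\le\tfrac1N\cS^{N/2}$, which proves \eqref{min P 0}.

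When $N\ge5$ one has $U_{\eps,0}\in L^2(\R^N)$ and $|U_{\eps,0}|_2^2=\eps^2|U_{1,0}|_2^2$, so there is a unique $\eps$ with $|U_{\eps,0}|_2=a$; for that $\eps$, $U_{\eps,0}\in S_a$ satisfies $|\nabla U_{\eps,0}|_2^2=|U_{\eps,0}|_{2^*}^{2^*}=\cS^{N/2}$ (hence $U_{\eps,0}\in\cP_{a,0}$) and $E_0(U_{\eps,0})=\tfrac1N\cS^{N/2}$, so the infimum in \eqref{min P 0} is attained. As $U_{\eps,0}$ solves \eqref{cr LE}, it solves \eqref{stat com} with $\lambda=\mu=0$, hence is a critical point of $E_0|_{S_a}$; since every critical point of $E_0|_{S_a}$ lies in $\cP_{a,0}$ by the Pohozaev identity and therefore has energy $\ge\tfrac1N\cS^{N/2}$, $U_{\eps,0}$ is a ground state. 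For uniqueness, a positive radial ground state $u\in S_a$ lies in $\cP_{a,0}$ with $E_0(u)=\tfrac1N\cS^{N/2}$, which forces $|\nabla u|_2^2=\cS^{N/2}=\cS|u|_{2^*}^2$, i.e. equality in \eqref{Sob ineq}; by the rigidity of the Sobolev inequality $u=cU_{\eps',y}$ for some $c>0$, and substituting into $-\Delta u=\lambda u+u^{2^*-1}$ forces $c=1$ and $\lambda=0$, while radiality and $|u|_2=a$ give $y=0$ and $\eps'=\eps$.

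When $N=3,4$, suppose $u\in H^1(\R^N)$, $u>0$, solves $-\Delta u=\lambda u+u^{2^*-1}$. Testing with $u$ gives $|\nabla u|_2^2=\lambda|u|_2^2+|u|_{2^*}^{2^*}$, while the Pohozaev identity (justified in the standard way by elliptic regularity and the decay of $H^1$ solutions) gives $\tfrac{N-2}2|\nabla u|_2^2=\tfrac N2\lambda|u|_2^2+\tfrac{N-2}2|u|_{2^*}^{2^*}$, using $N/2^*=(N-2)/2$; eliminating $|\nabla u|_2^2-|u|_{2^*}^{2^*}$ between these two identities yields $\lambda|u|_2^2=0$, hence $\lambda=0$. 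Then $u$ is a positive $\mathcal{D}^{1,2}(\R^N)$ solution of \eqref{cr LE}, so $u=U_{\eps,y}$ by the classification of entire solutions, which is impossible since $U_{\eps,y}\sim|x|^{-(N-2)}$ at infinity and $2(N-2)\le N$ for $N\le4$, so $U_{\eps,y}\notin L^2(\R^N)$; this gives the non-existence claim in 2). Consequently $\inf_{\cP_{a,0}}E_0$ cannot be attained: if some $\tilde u\in\cP_{a,0}$ attained it, then replacing $\tilde u$ by its symmetric decreasing rearrangement does not increase $\max_sE_0(s\star\cdot)$, so $s'\star|\tilde u|^*\in\cP_{a,0}$ (for the appropriate $s'$) would be a nonnegative radial minimizer attaining equality in \eqref{Sob ineq}, hence a bubble $U_{\eps,0}\in S_a\subset L^2(\R^N)$, a contradiction. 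The only genuinely delicate point in this scheme is the upper bound in the second step for $N=3,4$: the Aubin--Talenti bubbles are not in $L^2$, so they cannot be used directly as competitors on $S_a$, and one must instead exploit that the minimax value depends only on the scale-invariant quotient $|\nabla u|_2^2/|u|_{2^*}^2$, for which the Brezis--Nirenberg estimates on the truncated bubble are enough; everything else (the shape of $\Psi_u^0$, the Nehari/Pohozaev identities, and the rigidity in the Sobolev inequality together with the classification of entire solutions of \eqref{cr LE}) is classical.
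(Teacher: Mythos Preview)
Your proof is correct and follows the same overall strategy as the paper: both analyze the fiber $\Psi_u^0$ to obtain $\cP_{a,0}=\cP_{a,0}^-$ and the manifold structure, both reduce $\inf_{\cP_{a,0}}E_0$ to $\tfrac1N$ times the $(N/2)$-th power of the Sobolev quotient (the paper via density of $H^1$ in $\cD^{1,2}$, you via truncated bubbles --- equivalent), and for $N=3,4$ both pass to the symmetric decreasing rearrangement to produce a nonnegative radial minimizer that would have to be a bubble, contradicting $L^2$-integrability. The one noteworthy difference is in this last step: the paper first invokes the natural-constraint property of $\cP_{a,0}$ to show that the rearranged minimizer solves $-\Delta v=\lambda v+v^{2^*-1}$, then uses Pohozaev to force $\lambda=0$ and the Caffarelli--Gidas--Spruck classification to identify $v$ as a bubble; you instead read off directly that the minimizer attains equality in \eqref{Sob ineq} and apply Talenti's rigidity, which is a slight shortcut and avoids needing the natural-constraint statement at that point. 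One minor technical caveat: your manifold argument tacitly assumes the curve $s\mapsto s\star u$ is $C^1$ in $H^1$ so that its tangent vector lies in $T_uS_a$, but for generic $u\in H^1$ the formal derivative $\tfrac N2 u+x\cdot\nabla u$ need not be in $H^1$; the standard fix (used in the reference the paper cites) is to check directly that $dP_0(u)$ and $d(|\cdot|_2^2)(u)$ are linearly independent as functionals on $H^1$, which follows easily from $\cP_0^{a,0}=\emptyset$.
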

This simple proposition enlightens the peculiarity of the critical case with respect to the subcritical ones, where the dimension does not play any role (see e.g. Section 2 in \cite{So} for a brief account on them). The difference between the cases $N =3,4$ and $N \ge 5$ reflects the fact that functions $U_{\eps,y}$ (the extremal functions for the Sobolev inequality) are in $L^2(\R^N)$ if and only if $N \ge 5$. Notice that, if $N=3,4$, the above proposition implies the non-existence of positive ground states.

\begin{proof}
The proof of the proposition can be easily obtained combining some well known facts. First of all, it is not difficult to check that for every $u \in S_a$ the function $\Psi_u^0$ has a unique critical point $t_{u,0}$, which is a strict maximum point, and is given by 
\begin{equation}\label{def t_0 u}
e^{t_{u,0}}= \left( \frac{|\nabla u|_2^2}{|u|_{2^*}^{2^*}} \right)^\frac{1}{2^*-2}. 
\end{equation}
This implies, by Proposition \ref{prop: psi P}, that $\cP_+^{a,0}= \emptyset$. Also, if $u \in \cP_0^{a,0}$, then combining $(\Psi_u^0)'(0) = 0= (\Psi_u^0)''(0)$ we obtain
\[
2|\nabla u|_2^2 = 2^*|u|_{2^*} = 2^* |\nabla u|_2^2 \quad \implies \quad |\nabla u|_2 =0,
\]
which is not possible since $u \in S_a$. Then $\cP_{a,0}=\cP^{a,0}_-$. Moreover, using that $\cP_0^{a,0}= \emptyset$, it is standard to show that $\cP_{a,0}$ is a smooth manifold of codimension $1$ on $S_a$ (see e.g. \cite[Lemma 5.2]{So} for an analogue proof), and that $\cP_{a,0}$ is a natural constraint: if $u \in \cP_{a,0}$ is a critical point of $E_0|_{\cP_{a,0}}$, then $u$ is a critical point of $E_0|_{S_a}$ (see e.g. \cite[Proposition 1.11]{So}). 

The proof of the first equality in \eqref{min P 0} is contained in the forthcoming Lemma \ref{inf sup}, which regards the more general case $\mu \ge 0$. Now, by \eqref{def t_0 u}
\begin{align*}
\inf_{u \in \cP_{a,0}} E_0(u) &= \inf_{u \in S_a} \max_{s \in \R} E_0(s \star u) = \inf_{u \in S_a} \left[\frac{1}{2}\left( \frac{|\nabla u|_2^2}{|u|_{2^*}^{2^*}} \right)^\frac{2}{2^*-2} |\nabla u|_2^2 - \frac{1}{2^*}  \left( \frac{|\nabla u|_2^2}{|u|_{2^*}^{2^*}} \right)^\frac{2^*}{2^*-2} |u|_{2^*}^{2^*} \right] \\
& = \inf_{u \in S_a} \frac{1}{N} \left( \frac{|\nabla u|_2^2}{|u|_{2^*}^{2}} \right)^\frac{2^*}{2^*-2} = \inf_{u \in H^1(\R^N) \setminus \{0\}} \frac{1}{N} \left( \frac{|\nabla u|_2^2}{|u|_{2^*}^{2}} \right)^\frac{N}{2},
\end{align*}
that is, the minimization of $E_0$ on $\cP_{a,0}$ is equivalent to the minimization of the Sobolev quotient in $H^1(\R^N) \setminus \{0\}$. By density of $H^1$ in $\cD^{1,2}$, we infer that the infimum is $\cS^{\frac{N}2}/N$, and is achieved if and only if the extremal functions $U_{\eps,y}$ defined in \eqref{def bubble} stay in $L^2(\R^N)$, namely if and only if $N \ge 5$. Recalling that any critical point of $E_0$ on $S_a$ stays in $\cP_{a,0}$, and using the fact that $\cP_{a,0}$ is a natural constraint, the case $N \ge 5$ follows. If instead $N=3,4$, we can show that the infimum of $E_0$ on $\cP_{a,0}$ is not achieved. Let us assume by contradiction that there exists a minimizer $u$, and let $v := |u|^*$, the symmetric decreasing rearrangement of $u$. By standard properties $|\nabla v|_2 \le |\nabla u|_2$, $E_0(v) \le E_0(u)$, and $P_0(v) \le 0=P_0(u)$. If $P_0(v)<0$, then $t_{v,0}$ defined by \eqref{def t_0 u} is negative, and hence
\[
E_0(u) \le E_0(t_{v,0} \star v) = \frac{e^{2 t_{v,0}}}{N} |\nabla v|_2^2 \le \frac{e^{2 t_{v,0}}}{N} |\nabla u|_2^2 =   e^{2 t_{v,0}} E_0(u) < E_0(u),
\]
a contradiction. Thus, it is necessary that $P_0(v) = 0$, that is $v \in \cP_{a,0}$, and $v$ is a nonnegative radial minimizer. Since $\cP_{a,0}$ is a natural constraint, we deduce that
\begin{equation}\label{20jun19}
-\Delta v= \lambda v +  v^{2^*-1}, \quad v \ge 0 \quad \text{in $\R^N$}
\end{equation}
for some $\lambda \in \R$. Since $P_0(v) = 0$, necessarily $\lambda = 0$. Also, $v>0$ in $\R^N$ by the strong maximum principle. Hence, by \cite[Corollary 8.2]{CafGidSpr}, we have that $v = \alpha U_{\eps,0}$ for some $\alpha, \eps >0$. This is however not possible, since $U_{\eps,0} \not \in H^1(\R^N)$ for $N=3,4$. Notice that we may still hope to find a positive ground state, that is, a positive critical point of $E_0$ on $S_a$ minimizing the energy among all the critical points. But this would be a solution to \eqref{20jun19}, for some $\lambda \in \R$, and arguing as before it is not difficult to check that this is not possible. 
\end{proof}

\section{Compactness of Palais-Smale sequences}\label{sec: compactness}


In this section we discuss the convergence of Palais-Smale sequences (PS sequences for short) satisfying suitable additional conditions. This is probably one of the most delicate ingredient in the proofs of our main results, as already mentioned in the introduction.

\begin{proposition}\label{prop: PS conv}
Let $N \ge 3$, $2 <q < 2^*$, and let $a,\mu>0$. Let $\{u_n\} \subset S_{a,r} = S_a \cap H^1_{\rad}$ be a Palais-Smale sequence for $E_\mu|_{S_{a}}$ at level $m$, with 
\[
m < \frac{\cS^{\frac{N}{2}}}N \quad \text{and} \quad m \neq 0,
\]
where $\cS$ denotes the best constant in the Sobolev inequality. Suppose in addition that $P_\mu(u_n) \to 0$ as $n \to \infty$. Then one of the following alternatives holds:
\begin{itemize}
\item[$i$)] either up to a subsequence $u_n \weak u$ weakly in $H^1(\R^N)$ but not strongly, where $u \not \equiv 0$ is a solution to \eqref{stat com} for some $\lambda<0$, and 
\[
E_\mu(u) \le m - \frac{\cS^{\frac{N}{2}}}N;
\]
\item[$ii$)] or up to a subsequence $u_n \to u$ strongly in $H^1(\R^N)$, $E_\mu(u) = m$, and $u$ solves \eqref{stat com}-\eqref{norm} for some $\lambda<0$.\end{itemize}
\end{proposition}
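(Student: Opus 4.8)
The plan is to follow the classical Brezis–Nirenberg splitting, adapted to the normalized setting using the extra Pohozaev information. First I would establish boundedness of $\{u_n\}$ in $H^1(\R^N)$. From $E_\mu(u_n) \to m$ and $P_\mu(u_n) \to 0$ one forms a suitable combination, e.g. $E_\mu(u_n) - \tfrac{1}{2^*} P_\mu(u_n)$ (or a combination tuned to the sign of $2-q\gamma_q$), which controls $|\nabla u_n|_2^2$ and, together with the fixed mass $|u_n|_2 = a$, gives a uniform $H^1$-bound; when $q<\bar p$ one also invokes the Gagliardo–Nirenberg inequality with a Young-inequality absorption to close the estimate. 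Next, since $\{u_n\}$ is a Palais–Smale sequence for the constrained functional, by the Lagrange-multiplier rule there exist $\lambda_n \in \R$ with $-\Delta u_n - \lambda_n u_n - \mu|u_n|^{q-2}u_n - |u_n|^{2^*-2}u_n \to 0$ in $H^{-1}$; testing against $u_n$ and using the bounds just obtained shows $\{\lambda_n\}$ is bounded, so up to a subsequence $\lambda_n \to \lambda$. Up to a further subsequence $u_n \weak u$ in $H^1$, $u_n \to u$ in $L^p_{\loc}$ for $p<2^*$ and a.e., and passing to the limit in the equation gives that $u$ solves \eqref{stat com} with this $\lambda$. By the compactness of the radial embedding $H^1_{\rad}(\R^N) \embed L^q(\R^N)$ for $q \in (2,2^*)$ (Strauss), the subcritical term converges strongly: $|u_n|_q \to |u|_q$.

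Now set $v_n := u_n - u$, so $v_n \weak 0$ in $H^1$. By Brezis–Lieb, $|\nabla u_n|_2^2 = |\nabla u|_2^2 + |\nabla v_n|_2^2 + o(1)$ and $|u_n|_{2^*}^{2^*} = |u|_{2^*}^{2^*} + |v_n|_{2^*}^{2^*} + o(1)$, while $|v_n|_q \to 0$. Using that $u$ solves the equation (so $E_\mu(u)$ and $P_\mu(u)=0$ are exact) and subtracting, one gets from $E_\mu(u_n) \to m$ and $P_\mu(u_n) \to 0$ that
\[
\tfrac12 |\nabla v_n|_2^2 - \tfrac1{2^*}|v_n|_{2^*}^{2^*} \to m - E_\mu(u), \qquad |\nabla v_n|_2^2 - |v_n|_{2^*}^{2^*} \to 0.
\]
Hence both $|\nabla v_n|_2^2$ and $|v_n|_{2^*}^{2^*}$ converge to a common limit $\ell \ge 0$, and $\tfrac1N \ell = m - E_\mu(u)$. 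If $\ell = 0$ then $u_n \to u$ strongly in $\cD^{1,2}$; combined with $u_n \to u$ in $L^2$ (radial compactness is not available in $L^2$, so here one instead argues via the strong $\cD^{1,2}$ and $L^q$ convergence together with the equation: testing the full equation against $u_n - u$ and using $\lambda_n \to \lambda$ forces $\lambda \int u(u_n - u) \to 0$ type control — more cleanly, strong $\cD^{1,2}$ plus the equation for both $u_n$ and $u$ yields $\lambda_n |u_n|_2^2 \to \lambda |u|_2^2$, whence $|u_n|_2 \to |u|_2 = a$ provided $\lambda \neq 0$), so $u_n \to u$ in $H^1$, $E_\mu(u) = m$, and we are in alternative $(ii)$; the sign $\lambda < 0$ then follows by testing the limit equation against $u$ together with $P_\mu(u)=0$, which express $\lambda a^2$ as a negative combination of the remaining terms (using $m > 0$ or $m<0$ appropriately). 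If $\ell > 0$, the Sobolev inequality gives $\cS \le |\nabla v_n|_2^2 / |v_n|_{2^*}^2 \to \ell^{1-2/2^*} = \ell^{2/N}$, so $\ell \ge \cS^{N/2}$, whence $E_\mu(u) = m - \tfrac1N \ell \le m - \tfrac1N \cS^{N/2}$, and $u_n \weak u$ not strongly; that $u \not\equiv 0$ follows because otherwise $E_\mu(u) = 0$, forcing $m \ge \cS^{N/2}/N$, contradicting the hypothesis, and that $\lambda < 0$ is argued as before. This is alternative $(i)$.

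The main obstacle I anticipate is the lack of compactness of the $L^2$-embedding, even for radial functions: one does not automatically have $|u_n|_2 \to a$, so it is not immediate that the weak limit $u$ stays on $S_a$, and indeed in alternative $(i)$ it need not. The delicate point is therefore to extract, in the strong-convergence case, enough information to conclude $u \in S_a$ — this is exactly where the sign $\lambda < 0$ (together with the energy level constraint $m \neq 0$, which prevents $\lambda = 0$) must be pinned down first, via the Pohozaev identity $P_\mu(u) = 0$ and the equation, and only then does the mass pass to the limit. Separately, a technical subtlety is to handle the two regimes of $q$ (whether $q\gamma_q$ is $\lessgtr 2$) uniformly when forming the combination of $E_\mu$ and $P_\mu$ used both for boundedness and for the sign of $\lambda$; these are routine but must be done case by case.
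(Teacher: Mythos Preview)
Your proposal is correct and follows essentially the same Brezis--Nirenberg/Jeanjean hybrid as the paper: boundedness from the $E_\mu$/$P_\mu$ combination (case-split on the sign of $2-\gamma_q q$), extraction of $\lambda$, radial $L^q$-compactness, Brezis--Lieb splitting, and the Sobolev dichotomy $\ell=0$ versus $\ell\ge \cS^{N/2}$.

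One organizational point is worth tightening. In your $\ell=0$ branch you write ``$|u_n|_2\to|u|_2=a$ provided $\lambda\neq 0$'' and only \emph{afterwards} derive $\lambda<0$ from ``$\lambda a^2=\ldots$'', which already presumes $|u|_2=a$; as written this is circular. The paper avoids this by establishing $u\not\equiv 0$ and $\lambda<0$ \emph{before} the splitting, via the sequence-level identity
\[
\lambda a^2=\lim_n \lambda_n |u_n|_2^2=\lim_n \mu(\gamma_q-1)|u_n|_q^q=\mu(\gamma_q-1)|u|_q^q\le 0,
\]
which uses only $|u_n|_2=a$ (known) and $L^q$-compactness; then a short contradiction argument with $m<\cS^{N/2}/N$ and $m\neq 0$ rules out $u\equiv 0$. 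With $\lambda<0$ in hand, the paper's route to strong $L^2$-convergence in the $\ell=0$ case is exactly your first suggestion: subtract the equations for $u_n$ and $u$ tested against $u_n-u$, so that the $\cD^{1,2}$, $L^q$, $L^{2^*}$ terms vanish and one is left with $\lambda\int(u_n-u)^2\to 0$. Your alternative identity $\lambda_n|u_n|_2^2\to\lambda|u|_2^2$ also works, but again requires $\lambda\neq 0$ to have been secured first.
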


The proof is a sort of combination of the techniques introduced by H. Brezis and L. Nirenberg in \cite{BreNir}, and by L. Jeanjean in \cite{Jea}. Throughout this section, we assume that the assumptions of Proposition \ref{prop: PS conv} hold, and, since $a$ and $\mu$ are fixed, we omit the dependence on $E_\mu$, $P_\mu$, $S_a$, \dots on these quantities, writing simply $E$, $P$, $S$, \dots.

\begin{lemma}\label{lem: bdd}
The sequence $\{u_n\}$ is bounded in $H^1(\R^N)$.
\end{lemma}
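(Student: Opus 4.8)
The plan is to exploit the two conditions we have on $\{u_n\}$ simultaneously: the energy bound $E(u_n) \to m$ and the (approximate) Pohozaev identity $P(u_n) \to 0$. The key algebraic observation is that a suitable linear combination of $E(u_n)$ and $P(u_n)$ eliminates the two focusing terms with the worst sign behavior and leaves a quantity controlling $|\nabla u_n|_2^2$. Concretely, I would compute
\[
E(u_n) - \frac{1}{2^*} P(u_n) = \left(\frac12 - \frac{1}{2^*}\right)|\nabla u_n|_2^2 - \mu\left(\frac1q - \frac{\gamma_q}{2^*}\right)|u_n|_q^q = \frac1N |\nabla u_n|_2^2 - \mu\left(\frac1q - \frac{\gamma_q}{2^*}\right)|u_n|_q^q,
\]
so that
\[
\frac1N |\nabla u_n|_2^2 = E(u_n) - \frac{1}{2^*}P(u_n) + \mu\left(\frac1q - \frac{\gamma_q}{2^*}\right)|u_n|_q^q = m + o(1) + \mu\left(\frac1q - \frac{\gamma_q}{2^*}\right)|u_n|_q^q.
\]
Since $\gamma_q < 1$ for $q < 2^*$, the coefficient $\frac1q - \frac{\gamma_q}{2^*}$ is positive, so the sign of the $|u_n|_q^q$ term is unfavourable and must be absorbed. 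I would do this with the Gagliardo-Nirenberg inequality \eqref{GN ineq}: $|u_n|_q^q \le C_{N,q}^q\, a^{(1-\gamma_q)q}\, |\nabla u_n|_2^{\gamma_q q}$, where the mass is fixed at $a$ since $u_n \in S_a$.

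The next step distinguishes the three ranges of $q$. If $2 < q \le 2 + 4/N$, then $\gamma_q q \le 2$, so $|u_n|_q^q \le C\,|\nabla u_n|_2^{\gamma_q q}$ with $\gamma_q q \le 2$; plugging into the displayed identity gives $\frac1N|\nabla u_n|_2^2 \le C + C\mu|\nabla u_n|_2^{\gamma_q q}$, and since the exponent on the right is $\le 2$ (strictly $<2$ when $q<\bar p$, and $=2$ but with a small coefficient when $q=\bar p$, where one uses that $\mu a^{(1-\gamma_q)q}$ is small by assumption \eqref{hp} to keep the coefficient below $1/N$), this forces $|\nabla u_n|_2$ to stay bounded. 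The boundedness in $L^2$ is automatic from $u_n \in S_a$, so $\{u_n\}$ is bounded in $H^1$. If $2+4/N < q < 2^*$, then $\gamma_q q > 2$ and this direct bound fails; here I would instead combine the identity with a second relation. Using $P(u_n) \to 0$ we have $|\nabla u_n|_2^2 = |u_n|_{2^*}^{2^*} + \mu\gamma_q|u_n|_q^q + o(1) \ge 0$, and more usefully, subtract a different multiple: compute $E(u_n) - \frac{1}{\gamma_q q}P(u_n)$ to eliminate the $|u_n|_q^q$ term instead, obtaining
\[
\left(\frac12 - \frac{1}{\gamma_q q}\right)|\nabla u_n|_2^2 - \left(\frac{1}{2^*} - \frac{1}{\gamma_q q}\right)|u_n|_{2^*}^{2^*} = m + o(1).
\]
Since $\gamma_q q > 2$ we have $\frac12 - \frac{1}{\gamma_q q} > 0$, and since $\gamma_q q < \gamma_{2^*} 2^* = 2^*$ we have $\frac{1}{2^*} - \frac{1}{\gamma_q q} < 0$, so both terms on the left carry a \emph{favourable} sign, giving $|\nabla u_n|_2^2 \le C(m + o(1))$ directly, hence $H^1$-boundedness.

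I expect the main obstacle to be the $L^2$-critical threshold case $q = 2 + 4/N$ (and, more subtly, making sure the constants line up in the subcritical case), because there the Gagliardo-Nirenberg term scales exactly like $|\nabla u_n|_2^2$ and one genuinely needs the smallness hypothesis \eqref{hp} — i.e.\ $\mu a^{(1-\gamma_q)q} < \alpha$ with $\alpha$ chosen in terms of $C_{N,q}$ — to guarantee that the coefficient $\mu\left(\frac1q - \frac{\gamma_q}{2^*}\right)C_{N,q}^q a^{(1-\gamma_q)q}$ is strictly smaller than $1/N$ so that the inequality can be rearranged to bound $|\nabla u_n|_2^2$; without this the lemma would simply be false. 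The other cases are essentially free once the right linear combination of $E$ and $P$ is identified. I would present the argument uniformly by writing $\frac1N|\nabla u_n|_2^2 \le C + C'\mu a^{(1-\gamma_q)q}|\nabla u_n|_2^{\gamma_q q}$ and then discussing the exponent $\gamma_q q$ versus $2$, invoking \eqref{hp} exactly where $\gamma_q q = 2$.
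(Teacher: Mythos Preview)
Your argument in the $L^2$-subcritical and $L^2$-supercritical ranges is correct and essentially matches the paper (in the supercritical case the paper subtracts $\tfrac12 P(u_n)$ rather than $\tfrac{1}{\gamma_q q}P(u_n)$, bounding $|u_n|_q$ and $|u_n|_{2^*}$ first and then the gradient via $P(u_n)\to 0$, but your combination works just as well).

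The issue is the $L^2$-critical case $q=\bar p$. Lemma~\ref{lem: bdd} sits inside Proposition~\ref{prop: PS conv}, whose hypotheses do \emph{not} include \eqref{hp}; so you are not entitled to invoke the smallness of $\mu a^{4/N}$ here, and your assertion that ``without this the lemma would simply be false'' is incorrect. The paper avoids \eqref{hp} entirely by choosing a different linear combination: for $q=\bar p$ one has $\gamma_{\bar p}\bar p=2$, and subtracting $\tfrac12 P(u_n)$ from $E(u_n)$ cancels \emph{both} the gradient term and the $|u_n|_{\bar p}^{\bar p}$ term simultaneously, leaving
\[
E(u_n)-\tfrac12 P(u_n)=\tfrac1N|u_n|_{2^*}^{2^*}+o(1),
\]
which bounds $|u_n|_{2^*}$ outright. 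Then $|u_n|_{\bar p}$ is bounded by H\"older interpolation between $L^2$ and $L^{2^*}$, and finally $|\nabla u_n|_2^2=\mu\gamma_{\bar p}|u_n|_{\bar p}^{\bar p}+|u_n|_{2^*}^{2^*}+o(1)$ gives the gradient bound. So the key point you are missing is that at the critical exponent the combination $E-\tfrac12 P$ kills two terms at once, making Gagliardo--Nirenberg (and hence any smallness condition) unnecessary.
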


\begin{proof}
We assume at first that $q<2+4/N$, so that $\gamma_q q<2$. Since $P(u_n) \to 0$, we have
\[
\begin{split}
E(u_n) &= \frac1N|\nabla u_n|_2^2 - \frac{\mu}q \left( 1-\frac{\gamma_q q}{2^*}\right) |u_n|_q^q + o(1) \\
& \ge \frac1N|\nabla u_n|_2^2 - \frac{\mu}q C_{N,q}^q \left( 1-\frac{\gamma_q q}{2^*}\right) a^{(1-\gamma_q)q} |\nabla u_n|_2^{\gamma_q q} + o(1),
\end{split}
\]
by the Gagliardo-Nirenberg inequality. Then, using also that $E(u_n) \le m+1$ for $n$ large, we deduce that
\[
\frac1N|\nabla u_n|_2^2 \le \frac{\mu}q C_{N,q}^q \left( 1-\frac{\gamma_q q}{2^*}\right) a^{(1-\gamma_q)q} |\nabla u_n|_2^{\gamma_q q} + m +2,
\]
and this implies that $\{u_n\}$ is bounded.

Let now $q = \bar p =2+ 4/N$, and recall that $\gamma_{\bar p} \bar p=2$. Then $P(u_n) \to 0$ gives
\[
m+1 \ge E(u_n) = \frac{1}{N} |u_n|_{2^*}^{2^*} +o(1) \quad \implies \quad |u_n|_{2^*}^{2^*} \le C 
\]
for every $n$ large. By the H\"older inequality $|u_n|_q^q \le |u_n|_2^{2(1-\alpha)} |u_n|_{2^*}^{2^*\alpha} \le C$ as well (for a suitable $\alpha \in (0,1)$), and, using again that $P(u_n) \to 0$, we obtain
\[
|\nabla u_n|_2^2 = \mu \gamma_q |u_n|_q^q + |u_n|_{2^*}^{2^*} + o(1)\le C,
\]
as desired.

Finally, let $2+4/N < q<2^*$, so that $\gamma_q q >2$. Since $P(u_n) \to 0$, we have
\[
E(u_n) = \frac{\mu}q\left( \frac{\gamma_q q}2-1\right) |u_n|_q^q + \frac{1}{N} |u_n|_{2^*}^{2^*} + o(1),
\]
and the coefficient of $|u_n|_q^q$ is positive. Therefore, the boundedness of $E(u_n)$ implies that $\{|u_n|_q\}$ and $\{|u_n|_{2^*}\}$ are both bounded, and in turn this implies that $\{|\nabla u_n|_2\}$ is bounded, since $P(u_n) \to 0$.
\end{proof}

\begin{proof}[Proof of Proposition \ref{prop: PS conv}]
By the previous lemma, the sequence $\{u_n\}$ is a bounded sequence of radial functions, and hence, by compactness of $H^1_{\rad}(\R^N) \hookrightarrow L^q(\R^N)$, there exists $u \in H^1_{\rad}(\R^N)$ such that up to a subsequence $u_n \weak u$ weakly in $H^1(\R^N)$, $u_n \to u$ strongly in $L^q(\R^N)$, and a.e. in $\R^N$. Letting $v_n:= u_n -u$, this means that $v_n \weak 0$ weakly in $H^1(\R^N)$, $v_n \to 0$ strongly in $L^q(\R^N)$, and a.e. in $\R^N$. Since $\{u_n\}$ is a bounded PS sequence for $E|_S$,  there exists $\{\lambda_n\} \subset \R$ such that for every $\varphi \in H^1(\R^N)$
\begin{equation}\label{eq u_n}
\int_{\R^N} \nabla u_n \cdot \nabla \varphi - \lambda_n u_n \varphi - \mu |u_n|^{q-2} u_n \varphi - |u_n|^{2^*-2} u_n \varphi = o(1) \|\varphi\|
\end{equation}
as $n \to \infty$, by the Lagrange multipliers rule. Choosing $\varphi = u_n$, we deduce that $\{\lambda_n\}$ is bounded as well, and hence up to a subsequence $\lambda_n \to \lambda \in \R$. Using the fact that $P(u_n) \to 0$ and $\gamma_q<1$ we deduce that
\begin{equation}\label{lamb le 0}
\begin{split}
\lambda a^2 &= \lim_{n \to \infty} \lambda_n |u_n|_2^2 = \lim_{n \to \infty} \left( |\nabla u_n|_2^2 -\mu |u_n|_q^q - |u_n|_{2^*}^{2^*}\right)  \\
& =\lim_{n \to \infty}  \mu(\gamma_q -1) |u_n|_q^q = \mu(\gamma_q -1) |u|_q^q  \le 0,
\end{split}
\end{equation}
with $\lambda = 0$ if and only if $u \equiv 0$. We claim now that
\begin{equation}\label{lem: u not 0}
\text{the weak limit $u$ does not vanish identically}.
\end{equation}
Suppose by contradiction that $u \equiv 0$. Since $\{u_n\}$ is bounded in $H^1(\R^N)$, up to a subsequence we have that $|\nabla u_n|_2^2 \to \ell \in \R$. But $P(u_n) \to 0$ and $u_n \to 0$ strongly in $L^q$, and hence
\[
|u_n|_{2^*}^{2^*} = |\nabla u_n|_2^2 - \mu \gamma_q |u_n|_q^q \to \ell
\]
as well. Therefore, by the Sobolev inequality $\ell \ge S \ell^\frac{2}{2^*}$, and we deduce that 
\[
\text{either $\ell = 0$, or $\ell \ge \cS^{\frac{N}2}$}.
\]
Let us suppose at first that $\ell \ge \cS^{N/2}$. Since $E(u_n) \to m$ and $P(u_n) \to 0$, we have that
\[
\begin{split}
m +o(1) &= E(u_n) = \frac{1}{N} |\nabla u_n|_2^2 - \frac{\mu}{q} \left( 1 - \frac{\gamma_q q}{2^*}\right) |u_n|_q^q +o(1) \\
&= \frac{1}{N} |\nabla u_n|_2^2 + o(1) = \frac{\ell}{N} + o(1),
\end{split}
\]
whence $m = \ell/N$. However, this is not possible since in this case $m \ge \cS^{N/2}/N$, which contradicts our assumptions. If instead $\ell=0$, we have $|u_n|_q \to 0$, $|\nabla u_n|_2 \to 0$, and $|u_n|_{2^*} \to 0$. But then $E(u_n) \to 0$, which gives again a contradiction. Thus, claim \eqref{lem: u not 0} is verified. 

As already observed, \eqref{lamb le 0} and \eqref{lem: u not 0} imply that $\lambda<0$. Moreover, passing to the limit in \eqref{eq u_n} by weak convergence, we obtain
\begin{equation}\label{eq u lim}
-\Delta u= \lambda u + \mu|u|^{q-2} u + |u|^{2^*-2} u \qquad \text{in $\R^N$},
\end{equation}
and hence by the Pohozaev identity $P(u) = 0$. Recalling that $v_n = u_n-u \weak 0$ in $H^1(\R^N)$, we also have
\begin{equation}\label{6111}
|\nabla u_n|_2^2 = |\nabla u|_2^2 + |\nabla v_n|_2^2 + o(1),
\end{equation}
and by the Brezis-Lieb lemma \cite{BrLi}
\begin{equation}\label{6112}
|u_n|_{2^*}^{2^*} = |u|_{2^*}^{2^*} + |v_n|_{2^*}^{2^*} + o(1).
\end{equation}
Therefore, since $P(u_n) \to 0$, and $u_n \to u$ strongly in $L^q$, we deduce that
\[
|\nabla u|_2^2 + |\nabla v_n|_2^2 = \mu \gamma_q |u|_q^q +  |u|_{2^*}^{2^*} + |v_n|_{2^*}^{2^*} + o(1).
\]
But $P(u) = 0$, and hence $|\nabla v_n|_2^2 = |v_n|_{2^*}^{2^*} + o(1)$. We infer that up to a subsequence
\[
\lim_{n \to \infty} |\nabla v_n|_2^2 = \lim_{n \to \infty} |v_n|_{2^*}^{2^*} = \ell \ge 0, \quad \implies \quad \ell \ge S \ell^\frac{2}{2^*}
\]
by the Sobolev inequality. Therefore, either $\ell = 0$, or $\ell \ge \cS^{\frac{N}2}$. If $\ell \ge \cS^{N/2}$, then by \eqref{6111} and \eqref{6112}
\[
m = \lim_{n \to \infty} E(u_n) = 	\lim_{n \to \infty} \left( E(u) + \frac12 |\nabla v_n|_2^2 - \frac1{2^*}|v_n|_{2^*}^{2^*}  \right) = E(u) + \frac{\ell}{N} \ge E(u) + \frac{\cS^{\frac{N}{2}}}N,
\]
whence alternative ($i$) in the thesis of the proposition follows. If instead $\ell= 0$, then we show that $u_n \to u$ strongly in $H^1$. Indeed, $\|u_n-u\|_{\cD^{1,2}} = |\nabla v_n|_2 \to 0$ establishes that $u_n \to u$ strongly in $\mathcal{D}^{1,2}(\R^N)$, and hence in $L^{2^*}(\R^N)$ by the Sobolev inequality. In order to prove that $u_n \to u$ strongly in $L^2$, we test \eqref{eq u_n} with $\varphi = u_n-u$, test \eqref{eq u lim} with $u_n-u$, and subtract, obtaining
\begin{multline*}
\int_{\R^N} |\nabla (u_n-u)|^2 -\int_{\R^N} \left( \lambda_n u_n-  \lambda u \right) (u_n-u) = \\ \int_{\R^N} \left( |u_n|^{q-2} u_n - |u|^{q-2} u\right)(u_n-u) +\int_{\R^N} \left( |u_n|^{2^*-2} u_n - |u|^{2^*-2} u\right)  (u_n-u) + o(1).
\end{multline*}
Now the first, the third, and the fourth integrals tend to $0$ by convergence in $\mathcal{D}^{1,2}$, $L^q$, and $L^{2^*}$. As a consequence
\[
0 = \lim_{n \to \infty} \int_{\R^N} \left( \lambda_n u_n-  \lambda u \right) (u_n-u)  =  \lim_{n \to \infty} \lambda \int_{\R^N} (u_n -u)^2,
\]
and this completes the proof.
\end{proof}

We conclude this section stating the following variant of Proposition \ref{prop: PS conv}. 

\begin{proposition}\label{prop: PS conv 2}
Let $N \ge 3$, $2 <q < 2^*$, and let $a,\mu>0$. Let $\{u_n\} \subset S_{a}$ be a Palais-Smale sequence for $E_\mu|_{S_{a}}$ at level $m$, with 
\[
m < \frac{\cS^{\frac{N}{2}}}N \quad \text{and} \quad m \neq 0.
\]
Suppose in addition that $P_\mu(u_n) \to 0$ as $n \to \infty$, and that there exists $\{v_n\} \subset S_{a}$, with $v_n$ radially symmetric for every $n$, such that $\|u_n-v_n\| \to 0$ as $n \to \infty$. Then one of the alternatives ($i$) and ($ii$) in Proposition \ref{prop: PS conv} holds.
%
%
%
\end{proposition}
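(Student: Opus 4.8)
The plan is to reduce Proposition \ref{prop: PS conv 2} to Proposition \ref{prop: PS conv} by replacing the given sequence $\{u_n\}$ with the companion radial sequence $\{v_n\}$ and checking that all the hypotheses of Proposition \ref{prop: PS conv} are inherited. First I would observe that since $\|u_n-v_n\|\to 0$ in $H^1$, all the relevant functionals agree asymptotically along the two sequences: $E_\mu$ and $P_\mu$ are of class $C^1$ on $H^1$ and map bounded sets to bounded sets, so (after noting that $\{u_n\}$, hence $\{v_n\}$, is bounded — this already follows from Lemma \ref{lem: bdd}, whose proof only uses $P_\mu(u_n)\to 0$ and the energy bound, not radiality) we get $E_\mu(v_n) = E_\mu(u_n) + o(1) \to m$ and $P_\mu(v_n) = P_\mu(u_n) + o(1) \to 0$.

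The only point requiring a genuine argument is that $\{v_n\}$ is still a Palais--Smale sequence for $E_\mu|_{S_a}$ at level $m$. Here I would use the description of the constrained differential via the free differential projected onto the tangent space: for $w\in S_a$, $\|dE_\mu|_{S_a}(w)\|_{T_w^* S_a} = \|dE_\mu(w) - \langle dE_\mu(w),w\rangle\, |u|_2^{-2}\, w\|_{H^{-1}}$ (the Lagrange-multiplier normalization), or equivalently one may invoke the uniform-in-bounded-sets continuity of $w\mapsto dE_\mu|_{S_a}(w)$ as a section of the cotangent bundle over $S_a$. Since $dE_\mu$ is (locally Lipschitz, hence) uniformly continuous on the bounded set containing $\{u_n\}\cup\{v_n\}$, and the projection onto the tangent space depends continuously on the base point, $\|u_n-v_n\|\to 0$ forces $\|dE_\mu|_{S_a}(v_n)\| = \|dE_\mu|_{S_a}(u_n)\| + o(1) \to 0$. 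Thus $\{v_n\}\subset S_{a,r}$ is a Palais--Smale sequence for $E_\mu|_{S_a}$ at level $m$, with $m<\cS^{N/2}/N$, $m\neq 0$, and $P_\mu(v_n)\to 0$.

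Now Proposition \ref{prop: PS conv} applies to $\{v_n\}$, yielding either alternative ($i$) or ($ii$) with $v_n$ in place of $u_n$: up to a subsequence $v_n\weak v$ in $H^1(\R^N)$, where $v$ solves \eqref{stat com} for some $\lambda<0$, and either the convergence is not strong with $E_\mu(v)\le m - \cS^{N/2}/N$, or $v_n\to v$ strongly with $E_\mu(v)=m$ and $v\in S_a$. Finally, since $\|u_n-v_n\|\to 0$, we have $u_n\weak v$ weakly (resp. $u_n\to v$ strongly) in $H^1(\R^N)$ exactly when $v_n$ does, and all the quantitative statements ($E_\mu(v)\le m-\cS^{N/2}/N$ in case ($i$), $E_\mu(v)=m$ and the equations in case ($ii$)) transfer verbatim. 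Hence the same alternatives ($i$), ($ii$) hold for $\{u_n\}$, proving the proposition.

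The main obstacle, modest as it is, is the bookkeeping around the constrained Palais--Smale condition: one must be careful that closeness in $H^1$ of two points of $S_a$ really does transfer the smallness of the \emph{constrained} gradient, which hinges on the uniform continuity of $dE_\mu$ on bounded sets together with the continuous dependence of the tangent-space projection on the base point. Everything else is a direct inheritance of hypotheses and conclusions under an $o(1)$ perturbation, exploiting that Lemma \ref{lem: bdd} and the limiting arguments of this section were never specific to radial functions beyond the single use of the compact embedding $H^1_{\rad}\hookrightarrow L^q$ — which is precisely what the radiality of the $v_n$ restores.
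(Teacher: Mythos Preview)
Your argument is correct, but it takes a different route from the paper. You transfer \emph{all} the hypotheses of Proposition~\ref{prop: PS conv} to the radial sequence $\{v_n\}$ --- in particular you verify that $\{v_n\}$ is itself a constrained Palais--Smale sequence --- and then invoke Proposition~\ref{prop: PS conv} as a black box for $\{v_n\}$, finally pulling the conclusions back to $\{u_n\}$ via $\|u_n-v_n\|\to 0$. The paper instead keeps $\{u_n\}$ as the working sequence throughout: it only borrows from $\{v_n\}$ the one piece of information that radiality buys, namely strong $L^q$-subconvergence (hence strong $L^q$-subconvergence of $\{u_n\}$ as well, since $\|u_n-v_n\|\to 0$), and then re-runs the proof of Proposition~\ref{prop: PS conv} verbatim on $\{u_n\}$. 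This spares the paper the verification that the \emph{constrained} gradient is small along $\{v_n\}$, which is exactly the step you flag as the main obstacle. Your approach is more modular (it treats Proposition~\ref{prop: PS conv} as a closed lemma), while the paper's is shorter because the Palais--Smale property is already given for $\{u_n\}$ and never needs to be transferred. One small remark: in your projection formula the $|u|_2^{-2}$ should read $|w|_2^{-2}=a^{-2}$, and the pairing $\langle dE_\mu(w),w\rangle$ is $dE_\mu(w)[w]$; the underlying idea (uniform continuity of $dE_\mu$ on bounded sets plus continuous dependence of the tangent projection) is sound.
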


The proof is analogue to the previous one: as in Lemma \ref{lem: bdd}, we show that $\{u_n\}$ is bounded. Then also $\{v_n\}$ is bounded, and, since each $v_n$ is radial, we deduce that $v_n \weak u$ weakly in $H^1$, $v_n \to u$ strongly in $L^q$, and a.e. in $\R^N$, up to a subsequence. Since $\|u_n-v_n\| \to 0$, the same convergence is inherited by $\{u_n\}$, and we can proceed as in the proof of Proposition \ref{prop: PS conv}.


\section{$L^2$-subcritical perturbation}\label{sec: sub}

For $N \ge 3$ and $2<q < 2+4/N$, let
\[
C' :=  \left(\frac{2^* \cS^{\frac{2^*}2} (2-\gamma_q q)}{2(2^*-\gamma_q q)}  \right)^{\frac{2-\gamma_q q}{2^*-2}}\left(\frac{q(2^*-2)}{2C_{N,q}^q(2^*-\gamma_q q)}\right),
\]
\[
\begin{split}
C'' :
&= \frac{2 \, 2^*}{N \gamma_q C_{N,q}^q(2^*-\gamma_q q)} \left( \frac{\gamma_q q \, \cS^{\frac{N}2}}{2-\gamma_q q}\right)^\frac{2-\gamma_q q}2.
\end{split}
\]
In this section we prove that, for $2<q<2+4/N$, Theorem \ref{thm: main} holds for any $a,\mu>0$ satisfying \eqref{hp} with
\begin{equation}\label{alpha sub}
\alpha(N,q):= \min\{C', C''\}.
\end{equation}
Throughout the proof, $a$ and $\mu$ satisfying \eqref{hp} with this definition of $\alpha$ will be fixed, and hence we often omit the dependence on these quantities.

The geometry of the constrained functional $E|_{S}$ is very similar to the one associated with \eqref{com nls} in case $2<q<2+4/N<p<2^*$, which was carefully analyzed in \cite[Section 5]{So}. As a first observation, we note that by the Gagliardo-Nirenberg and the Sobolev inequalities
\begin{equation}\label{E sub sup}
E (u) \ge \frac{1}{2} |\nabla u|_2^2 - \mu \frac{C_{N,q}^q}{q} a^{(1-\gamma_q) q} |\nabla u|_2^{\gamma_q q} -  \frac{1}{2^* \cS^{\frac{2^*}2}} |\nabla u|_2^{2^*},
\end{equation}
for every $u \in S$. Therefore, we consider the function $h : \R^+ \to \R$
\begin{equation*}
h(t):=  \frac{1}{2} t^2 - \mu \frac{C_{N,q}^q}{q} a^{(1-\gamma_q) q} t^{\gamma_q q} -  \frac{1}{2^* \cS^{\frac{2^*}2}} t^{2^*}.
\end{equation*}
Since $\mu>0$ and $\gamma_q q<2<2^*$, we have that $h(0^+) = 0^-$ and $h(+\infty) = -\infty$. Moreover:

\begin{lemma}\label{lem: struct h}
Under assumption \eqref{hp}, the function $h$ has a local strict minimum at negative level, a global strict maximum at positive level, and no other critical points, and there exist 
$R_0$ and $R_1$, both depending on $a$ and $\mu$, such that $h(R_0) = 0 = h(R_1)$ and $h(t) >0$ iff $t \in (R_0,R_1)$. 
\end{lemma}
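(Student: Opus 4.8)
The plan is to exploit the homogeneity structure of $h$. Since $\gamma_q q<2<2^*$ (as $q<2+4/N$), I would first factor out the lowest power and write, for $t>0$,
\[
h(t)=t^{\gamma_q q}\bigl(g(t)-c'\bigr),\qquad g(t):=\tfrac12 t^{\,2-\gamma_q q}-\tfrac{1}{2^*\cS^{2^*/2}}t^{\,2^*-\gamma_q q},\qquad c':=\mu\,\tfrac{C_{N,q}^q}{q}\,a^{(1-\gamma_q)q}>0.
\]
Both exponents $2-\gamma_q q$ and $2^*-\gamma_q q$ are positive with $2^*-\gamma_q q>2-\gamma_q q$, so $g(0^+)=0$, $g(t)\to-\infty$ as $t\to+\infty$, and a direct computation of $g'$ shows that $g$ is strictly increasing on $(0,\bar t)$ and strictly decreasing on $(\bar t,+\infty)$, where $\bar t^{\,2^*-2}=\tfrac{(2-\gamma_q q)2^*\cS^{2^*/2}}{2(2^*-\gamma_q q)}$; hence $\bar t$ is the unique critical point of $g$, a strict global maximum. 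Evaluating gives $g(\bar t)=\bigl(\tfrac{(2-\gamma_q q)2^*\cS^{2^*/2}}{2(2^*-\gamma_q q)}\bigr)^{\frac{2-\gamma_q q}{2^*-2}}\tfrac{2^*-2}{2(2^*-\gamma_q q)}$, and the whole role of the constant $C'$ is that $g(\bar t)>c'$ is \emph{equivalent} to $\mu a^{(1-\gamma_q)q}<C'$, which is guaranteed by \eqref{hp} since $\alpha\le C'$.

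Carrying this out: as $g(0^+)=0<c'$, $g(\bar t)>c'$, and $g(+\infty)=-\infty$, under \eqref{hp} the superlevel set $\{g>c'\}$ is an open interval $(R_0,R_1)$ with $0<R_0<\bar t<R_1<+\infty$ and $g(R_0)=g(R_1)=c'$; multiplying back by $t^{\gamma_q q}>0$ yields $h(R_0)=h(R_1)=0$, $h>0$ exactly on $(R_0,R_1)$ (in particular $h(\bar t)>0$), $h<0$ on $(0,R_0)\cup(R_1,+\infty)$, $h(t)\to0^-$ as $t\to0^+$, and $h(t)\to-\infty$ as $t\to+\infty$. It remains to count and classify the critical points of $h$. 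Differentiating and again factoring the lowest power, $h'(t)=t^{\gamma_q q-1}\bigl(\varphi(t)-d\bigr)$ for $t>0$, with $\varphi(t):=t^{\,2-\gamma_q q}-\tfrac{1}{\cS^{2^*/2}}t^{\,2^*-\gamma_q q}$ and $d:=\mu\gamma_q C_{N,q}^q a^{(1-\gamma_q)q}>0$; like $g$, the function $\varphi$ vanishes at $0$, tends to $-\infty$, and has a single critical point $t_{\max}$, a strict global maximum. The key point is that one does not need an explicit comparison of $\varphi(t_{\max})$ with $d$: if $\varphi(t_{\max})\le d$ then $h'\le0$ on $(0,+\infty)$, so $h$ would be non-increasing, and with $h(0^+)=0$ this forces $h\le0$ everywhere, contradicting $h(\bar t)>0$ from the previous step. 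Hence $\varphi(t_{\max})>d$, so $\varphi(t)=d$ has exactly two roots $0<t_1<t_{\max}<t_2$, with $\varphi<d$ on $(0,t_1)\cup(t_2,+\infty)$ and $\varphi>d$ on $(t_1,t_2)$; consequently $h$ is strictly decreasing on $(0,t_1)$, strictly increasing on $(t_1,t_2)$, strictly decreasing on $(t_2,+\infty)$, and $t_1,t_2$ are its only critical points.

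Putting the two analyses together concludes the proof. The point $t_1$ is a strict local minimum, and since $h$ drops from $h(0^+)=0$ to $h(t_1)$ its level is negative; $t_2$ is a strict local maximum, which by the monotonicity pattern and $h(+\infty)=-\infty$ is actually the strict global maximum, with positive level because $h(t_2)\ge h(\bar t)>0$; and there are no other critical points. Finally $R_0$ is the unique zero of $h$ in $(t_1,t_2)$ and $R_1$ the unique zero in $(t_2,+\infty)$, both depending on $a$ and $\mu$ (through $c'$ and $d$), and $h>0$ iff $t\in(R_0,R_1)$, as claimed. The only mildly delicate step is the computational verification that $g(\bar t)>c'\iff\mu a^{(1-\gamma_q)q}<C'$, which is just bookkeeping with the exponents $2-\gamma_q q$, $2^*-\gamma_q q$ and $2^*-2$; everything else is elementary single-variable calculus, so I do not expect a genuine obstacle here.
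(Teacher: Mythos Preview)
Your proof is correct and follows essentially the same route as the paper, which defers to \cite[Lemma~5.1]{So}: factor out the lowest power $t^{\gamma_q q}$, reduce to a two-term function with a single interior maximum, and verify that the threshold condition $g(\bar t)>c'$ is exactly $\mu a^{(1-\gamma_q)q}<C'$. Your indirect argument for $\varphi(t_{\max})>d$---deducing it from the already-established positivity $h(\bar t)>0$ rather than by a second explicit computation---is a clean touch that spares some bookkeeping, but the overall strategy is the standard one.
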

\begin{proof}
By assumption $\mu a^{(1-\gamma_q)q} < C'$, and, recalling that $\gamma_{2^*}=1$ and $C_{N,2^*}= \cS^{-1/2}$, it is immediate to see that this condition coincides with \cite[Assumption (1.6)]{So} in case $p=2^*$. Therefore, we can proceed exactly as in \cite[Lemma 5.1]{So}. 
\end{proof}

Using again that $\mu a^{(1-\gamma_q)q}< C'$, we can also prove as in \cite[Lemma 5.2]{So} that $\cP_0 = \emptyset$, and $\cP$ is a smooth manifold of codimension $1$ in $S$. This fact can in turn be used in the following lemma.

\begin{lemma}\label{lem: fiber sub}
For every $u \in S$, the function $\Psi_u$ has exactly two critical points $s_u< t_u \in \R$ and two zeros $c_u <d_u \in \R$, with $s_u<c_u<t_u<d_u$. Moreover:
\begin{itemize}
\item[1)] $s_u \star u \in \cP_+$, and $t_u \star u \in \cP_-$, and if $s \star u \in \cP$, then either $s =s_u$ or $s=t_u$. 
\item[2)] $|\nabla (s \star u)|_2 \le R_0$ for every $s \le c_u$, and
\[
E(s_u \star u) = \min \left\{E(s \star u): \ \text{$s \in \R$ and $|\nabla (s \star u)|_2 < R_0$}\right\} < 0.
\]
\item[3)] We have
\[
E(t_u \star u) = \max \left\{E(s \star u): \ s \in \R\right\} > 0,
\]
and $\Psi_u$ is strictly decreasing and concave on $(t_u,+\infty)$. In particular, if $t_u<0$, then $P(u)<0$.
\item[4)] The maps $u \in S \mapsto s_u \in \R$ and $u \in S \mapsto t_u \in \R$ are of class $C^1$.
\end{itemize}
\end{lemma}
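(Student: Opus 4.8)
The plan is to read off every assertion from the one-variable function $\Psi_u=\Psi_u^\mu$ introduced in \eqref{def psi}. Recalling from Section~\ref{sec: pre} that $(\Psi^\mu_u)'(s)=P_\mu(s\star u)$ and that $|\nabla(s\star u)|_2=e^s|\nabla u|_2$, I would first factor
\[
(\Psi^\mu_u)'(s)=e^{2s}\Big(|\nabla u|_2^2-e^{(2^*-2)s}|u|_{2^*}^{2^*}-\mu\gamma_q e^{(\gamma_q q-2)s}|u|_q^q\Big)=:e^{2s}\,g_u(s).
\]
Since $\gamma_q q<2<2^*$ (this is the only place the hypothesis $2<q<2+4/N$ is used), a direct computation gives $g_u(\pm\infty)=-\infty$ and $g_u''<0$ on $\R$, so $g_u$ is strictly concave with a unique interior maximum point $s_u^\ast$; hence $g_u$ is strictly increasing on $(-\infty,s_u^\ast)$ and strictly decreasing on $(s_u^\ast,+\infty)$, and $\Psi_u$ has \emph{at most} two critical points, while $\Psi_u(-\infty)=0^-$ and $\Psi_u(+\infty)=-\infty$.

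The first genuine step is to show that $\Psi_u$ attains a strictly positive value, for then $\max g_u=g_u(s_u^\ast)>0$ and $g_u$ has \emph{exactly} two zeros $s_u<t_u$. Here assumption \eqref{hp} enters, exactly as in Lemma~\ref{lem: struct h}: by \eqref{E sub sup} one has $\Psi_u(s)\ge h\big(e^s|\nabla u|_2\big)$, and $h>0$ precisely on $(R_0,R_1)$, so choosing $s$ with $e^s|\nabla u|_2\in(R_0,R_1)$ yields $\Psi_u(s)>0$. Consequently $\Psi_u$ has a strictly positive global maximum, attained at a critical point which must be $t_u$; the zeros of $g_u$ then satisfy $s_u<s_u^\ast<t_u$, with $g_u<0$ on $(-\infty,s_u)$, $g_u>0$ on $(s_u,t_u)$ and $g_u<0$ on $(t_u,+\infty)$. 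Thus $s_u$ is a strict local minimum and $t_u$ the global maximum of $\Psi_u$, they are its only critical points, and $(\Psi_u)''(s_u)=e^{2s_u}g_u'(s_u)>0$, $(\Psi_u)''(t_u)=e^{2t_u}g_u'(t_u)<0$. By Proposition~\ref{prop: psi P} and the splitting \eqref{split P} this is item~1: $s_u\star u\in\cP_+$, $t_u\star u\in\cP_-$, and $s\star u\in\cP$ forces $s\in\{s_u,t_u\}$. Since $\Psi_u(s_u)<0$ (the graph descends from $0^-$) and $\Psi_u(t_u)>0$, the monotonicity pattern also produces the two zeros $c_u\in(s_u,t_u)$ and $d_u\in(t_u,+\infty)$, in the stated order.

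Items~2 and~3 then follow from the shape of $\Psi_u$. For item~3, $E(t_u\star u)=\Psi_u(t_u)=\max_{\R}\Psi_u>0$ is immediate; on $(t_u,+\infty)$ one has $(\Psi_u)''=e^{2s}(2g_u+g_u')<0$, because there $g_u<0$ and $g_u'<0$, so $\Psi_u$ is strictly decreasing and concave, and if $t_u<0$ then $P_\mu(u)=(\Psi_u)'(0)<0$. For item~2, let $e^{s_0}|\nabla u|_2=R_0$, so that $\{s:|\nabla(s\star u)|_2<R_0\}=(-\infty,s_0)$: since $\Psi_u>0$ on the set where $e^s|\nabla u|_2\in(R_0,R_1)$, the first zero of $\Psi_u$ satisfies $e^{c_u}|\nabla u|_2\le R_0$, hence $|\nabla(s\star u)|_2\le R_0$ for every $s\le c_u$; and since $\Psi_u(s)\ge h(R_0)=0$ whenever $e^s|\nabla u|_2=R_0$ while $\Psi_u(s_u)<0$ and $s_u$ is the unique local minimum, the infimum of $E$ over $\{|\nabla(s\star u)|_2<R_0\}$ is attained exactly at $s_u$ and equals $\Psi_u(s_u)<0$. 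Finally, for item~4 I would apply the implicit function theorem to the map $(s,u)\mapsto(\Psi^\mu_u)'(s)=P_\mu(s\star u)$, which is of class $C^1$ (using \eqref{cont star} and $E_\mu\in C^1$): its $s$-derivative at $s_u$, resp.\ $t_u$, equals $(\Psi_u)''(s_u)>0$, resp.\ $(\Psi_u)''(t_u)<0$, both nonzero, so $u\mapsto s_u$ and $u\mapsto t_u$ are $C^1$.

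The hard part is the first step: exhibiting a dilation of $u$ with positive energy, which is exactly what upgrades ``at most two'' critical points to ``exactly two'' with the correct local-minimum/global-maximum structure, and it is precisely here that the quantitative assumption \eqref{hp} is indispensable, through the positivity of $\max h$ in Lemma~\ref{lem: struct h}. Everything else — identifying the pieces of $\cP$ from the sign of $(\Psi_u)''$, the concavity on $(t_u,+\infty)$, and the $C^1$ dependence via the implicit function theorem — is routine and runs parallel to the analysis in \cite[Section~5]{So}.
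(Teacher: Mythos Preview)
Your proof is correct and follows essentially the same route as the paper, which simply refers to \cite[Lemma~5.3]{So} without giving details. The factorization $(\Psi_u)'(s)=e^{2s}g_u(s)$ with $g_u$ strictly concave, the use of $\Psi_u(s)\ge h(e^s|\nabla u|_2)$ together with Lemma~\ref{lem: struct h} to force $\max g_u>0$, and the implicit function theorem argument for item~4 are exactly the ingredients the paper employs (compare the parallel computations in Lemmas~\ref{lem: fiber cr} and~\ref{lem: fiber sup}); you have supplied the details the paper omits.
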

Again, the proof is completely analogue to the one in \cite[Lemma 5.3]{So}. 

Now, for $k >0$, we set
\[
A_k:= \left\{u \in S: |\nabla u|_2<k\right\}, \quad \text{and} \quad m(a,\mu):= \inf_{u \in A_{R_0}} E(u).
\]
From Lemma \ref{lem: fiber sub} it follows that $\cP_+\subset A_{R_0}$, and $\sup_{\cP_+}E \le 0 \le \inf_{\cP_-}E$. Moreover, as in \cite[Lemma 5.5]{So} we have that $m(a,\mu) \in (-\infty,0)$, that
\begin{equation}\label{inf interno}
m(a,\mu) =  \inf_{\cP} E = \inf_{\cP_+}E, \quad \text{and that} \quad m(a,\mu) < \, \inf_{\overline{A_{R_0}} \setminus A_{R_0-\rho}} E
\end{equation}
for $\rho>0$ small enough. This allows to proceed with the:

\begin{proof}[Proof of Theorem \ref{thm: main} for $2<q<2+4/N$]
Let $\{v_n\}$ be a minimizing sequence for $\inf_{A_{R_0}} E$. It is not restrictive to assume that $v_n \in S_r$ is radially decreasing for every $n$ (if this is not the case, we can replace $v_n$ with $|v_n|^*$, the Schwarz rearrangement of $|v_n|$, and we obtain another function in $A_{R_0}$ with $E(|v_n|^*) \le E(v_n)$). Furthermore, for every $n$ we can take $s_{v_n} \star v_n \in \cP_+$, observing that then $|\nabla (s_{v_n} \star v_n)|_2 <R_0$ by Lemma \ref{lem: fiber sub}, and
\[
E(s_{v_n} \star v_n) = \min\left\{E(s \star v_n): \ \text{$s \in \R$ and $|\nabla (s \star v_n)|_2 < R_0$}\right\} \le E(v_n);
\]
in this way we obtain a new minimizing sequence $\{w_n=s_{v_n} \star v_n\}$, with $w_n \in S_r \cap \cP_+$ radially decreasing for every $n$. By \eqref{inf interno}, $|\nabla w_n|_2 < R_0-\rho$ for every $n$, and hence Ekeland's variational principle yields in a standard way the existence of a new minimizing sequence $\{u_n\} \subset A_{R_0}$ for $m(a,\mu)$, with the property that $\|u_n-w_n\| \to 0$ as $n \to \infty$, which is also a Palais-Smale sequence for $E$ on $S$. The condition $\|u_n-w_n\| \to 0$, together with the boundedness of $\{w_n\}$ (each $w_n$ stays in $A_{R_0}$), implies that $P(u_n) =P(w_n) +o(1) \to 0$ as $n \to \infty$: indeed
\begin{align*}
\int_{\R^N}|\nabla u_n|^2 &= \int_{\R^N}|\nabla w_n|^2 + \int_{\R^N}|\nabla (u_n- w_n)|^2 + 2 \int_{\R^N} \nabla w_n \cdot \nabla (u_n-w_n) = \int_{\R^N}|\nabla w_n|^2 + o(1) \\
\int_{\R^N}|u_n|^p &= \int_{\R^N} |w_n|^p+ \int_{\R^N} p|w_n+\xi_n (u_n-w_n)|^{p-1} (u_n-w_n) =\int_{\R^N} |w_n|^p + o(1),
\end{align*}
for every $p \in [2,2^*]$, where $\xi_n= \xi_n(x) \in [0,1]$. Hence one of the alternatives in Proposition \ref{prop: PS conv 2} holds. We wish to show that necessarily the second alternative occurs. Assume then by contradiction that $u_n \weak u$ weakly in $H^1$, where $u$ solves \eqref{stat com} for some $\lambda<0$ and 
\[
E(u) \le m(a,\mu) - \frac{\cS^{\frac{N}2}}{N}.
\]
Since $u$ solves \eqref{stat com}, by the Pohozaev identity $P(u) = 0$. Therefore, by the Gagliardo-Nirenberg inequality
\begin{equation}\label{7121}
\begin{split}
m(a,\mu) &\ge \frac{\cS^{\frac{N}2}}{N} + E(u) \ge \frac{\cS^{\frac{N}2}}{N}+ \frac1N|\nabla u|_2^2 -\frac{\mu}q\left( 1- \frac{\gamma_q q}{2^*} \right) |u|_q^q \\
& \ge \frac{\cS^{\frac{N}2}}{N}+ \frac1N|\nabla u|_2^2 -\frac{\mu}q\left( 1- \frac{\gamma_q q}{2^*} \right) C_{N,q}^q a^{(1-\gamma_q)q} |\nabla u|_2^{\gamma_q q},
\end{split}
\end{equation}
where we used the fact that $|u|_2 \le a$ by weak convergence. Now the idea is that if $\mu a^{(1-\gamma_q)q}$ is smaller than $C''$, then the right hand side is positive, in contradiction with the fact that $m(a,\mu) <0$. This can be checked rigorously in the following way: we introduce
\[
\vartheta(t) := \frac1N t^2 -\frac{\mu}q\left( 1- \frac{\gamma_q q}{2^*} \right) C_{N,q}^q a^{(1-\gamma_q)q} t^{\gamma_q q}, \qquad t >0.
\]  
The function $\vartheta$ has a global minimum at negative level
\[
\begin{split}
\bar \vartheta = - \left(\mu a^{(1-\gamma_q)q}\right)^{\frac{2}{2-\gamma_q q}} \left( N \gamma_q \right)^\frac{\gamma_q q}{2-\gamma_q q} \left( C_{N,q}^q\frac{2^*-\gamma_q q}{2 \, 2^*}\right)^\frac{2}{2-\gamma_q q} \frac{2-\gamma_q q}{q} <0,
 \end{split}
\]
and since $\mu a^{(1-\gamma_q)q}< \alpha \le C''$ by \eqref{hp}, we have that $\bar \vartheta > -\cS^\frac{N}2/N$. Therefore, from \eqref{7121} we infer that
\[
m(a,\mu) \ge  \frac{\cS^{\frac{N}2}}{N}+ \vartheta(|\nabla u|_2^2) \ge  \frac{\cS^{\frac{N}2}}{N}+ \bar \vartheta >0,
\]
in contradiction with the fact that $m(a,\mu)<0$. This means that necessarily $u_n \to \tilde u$ strongly in $H^1$, and $\tilde u \in S$ is a normalized solution to \eqref{stat com} for some $\tilde \lambda<0$. In fact, $\tilde u$ is a ground state, since $E(\tilde u) = \inf_{\cP} E$, and any other normalized solution stays on $\cP$. It remains only to show that any other ground state is a local minimizer for $E$ on $A_{R_0}$. Let $u$ be a critical point of $E|_S$ at level $m(a,\mu)$. Then $u \in \cP$ and $E(u)<0$, and hence by Lemma \ref{lem: fiber sub} we have that $u \in \cP_+ \subset A_{R_0}$; therefore, always by Lemma \ref{lem: fiber sub}, 
\[
E(u) = m(a,\mu) = \inf_{A_{R_0}} E, \quad \text{and} \quad |\nabla u|_2<R_0. \qedhere
\]
\end{proof}

\section{$L^2$-critical perturbation}\label{sec: cr}

In this section we fix $N \ge 3$, $q=\bar p = 2+4/N$, $a,\mu>0$, and we show that Theorem \ref{thm: main} holds with 
\begin{equation}\label{alpha cr}
\alpha(N,q) = \frac{\bar p}{2 C_{N,\bar p}^{\bar p}} = \bar a_N^\frac{4}{N},
\end{equation}
where $\bar a_N$ coincides with the critical mass for the homogeneous $L^2$-critical Schr\"odinger equation\footnote{This is the only value of $a$ for which the problem
\[
-\Delta u= \lambda u + |u|^\frac{4}{N} u \qquad \text{in $\R^N$}, \quad \text{and} \quad \int_{\R^N} u^2 =a^2
\]
has a positive ground state.}, by \cite{Wei}. Throughout the proof, $a$ and $\mu$ satisfying \eqref{hp} with this definition of $\alpha$ will be fixed, and hence we often omit the dependence on these quantities.

The change of the geometry of $E|_{S}$ with respect to the case $q<\bar p$ is enlightened by the following simple lemmas, which are natural counterparts of those available in the Sobolev subcritical context (see \cite[Section 6]{So}). We recall the decomposition $\cP= \cP_+ \cup \cP_0 \cup \cP_-$, see \eqref{split P}.
If $u \in \cP_0$, that is $\Psi_u'(0) = \Psi_u''(0) = 0$, then necessarily $|u|_{2^*}=0$, which is not possible since $u \in S$. Then $\cP_0 = \emptyset$. Using this fact, and arguing as in \cite[Lemma 5.2]{So}, one can easily check that $\cP$ is a smooth manifold of codimension $1$ in $S_a$.

\begin{lemma}\label{lem: fiber cr}
For every $u \in S$, there exists a unique $t_u \in \R$ such that $t_u \star u \in \cP$. $t_u$ is the unique critical point of the function $\Psi_u$, and is a strict maximum point at positive level. Moreover:
\begin{itemize}
\item[1)] $\cP= \cP_-$. 
\item[2)] $\Psi_u$ is strictly decreasing and concave on $(t_u, +\infty)$, and $t_u<0$ implies $P(u)<0$.
\item[3)] The map $u \in S \mapsto t_u \in \R$ is of class $C^1$.
\item[4)] If $P(u)<0$, then $t_u<0$.
\end{itemize}
\end{lemma}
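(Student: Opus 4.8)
The statement concerns the fiber map $\Psi_u^\mu(s) = E_\mu(s\star u)$ in the $L^2$-critical case $q = \bar p = 2+4/N$, where $\gamma_q q = 2$. The key simplification is that the subcritical term scales exactly like the gradient term: writing $A = |\nabla u|_2^2$, $B = |u|_q^q$, $C = |u|_{2^*}^{2^*}$, we have
\[
\Psi_u^\mu(s) = \frac{e^{2s}}{2}\Bigl(A - \frac{2\mu}{q}B\Bigr) - \frac{e^{2^*s}}{2^*}C.
\]
The crucial observation is that assumption \eqref{hp} with $\alpha = \bar p/(2C_{N,\bar p}^{\bar p})$ forces the coefficient of $e^{2s}/2$ to be \emph{strictly positive}: indeed by Gagliardo–Nirenberg, $\frac{2\mu}{q}B \le \frac{2\mu}{q}C_{N,q}^q a^{(1-\gamma_q)q}|\nabla u|_2^{\gamma_q q} = \mu a^{4/N}\cdot\frac{2C_{N,\bar p}^{\bar p}}{\bar p}\cdot A < A$. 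So define $\kappa_u := A - \frac{2\mu}{q}B > 0$; then $\Psi_u^\mu(s) = \tfrac12\kappa_u e^{2s} - \tfrac{1}{2^*}C e^{2^*s}$, which is structurally \emph{identical} to $\Psi_u^0$ from \eqref{def psi 0} with $|\nabla u|_2^2$ replaced by $\kappa_u$. This immediately reduces everything to the elementary analysis already performed for the homogeneous case in the proof of Proposition \ref{prop: hom}.

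\textbf{Key steps.} First I would record $\kappa_u > 0$ as a consequence of \eqref{hp} and Gagliardo–Nirenberg, exactly as in the boundedness argument of Lemma \ref{lem: bdd}. Next, since $\Psi_u^\mu(s) = \tfrac12\kappa_u e^{2s} - \tfrac1{2^*}C e^{2^*s}$ with $\kappa_u, C > 0$ and $2 < 2^*$, differentiating gives $(\Psi_u^\mu)'(s) = \kappa_u e^{2s} - C e^{2^*s} = e^{2s}(\kappa_u - C e^{(2^*-2)s})$, which vanishes at the unique point $t_u$ determined by $e^{(2^*-2)t_u} = \kappa_u/C$, i.e. $e^{t_u} = (\kappa_u/C)^{1/(2^*-2)}$; since $(\Psi_u^\mu)'$ changes sign from $+$ to $-$ there, $t_u$ is a strict global maximum, and $\Psi_u^\mu(t_u) > \Psi_u^\mu(-\infty) = 0$ gives the positive level. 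For item (1): $\cP = \cP_-$ follows because $\cP_0 = \emptyset$ (already noted in the text) and because $\cP_+ = \emptyset$ — if $u\in\cP_+$ then $0$ would be a critical point of $\Psi_u^\mu$ with $(\Psi_u^\mu)''(0) > 0$, contradicting that the unique critical point is a strict maximum. For item (2): concavity and monotonic decrease on $(t_u, +\infty)$ follow by direct inspection of $(\Psi_u^\mu)''(s) = 2\kappa_u e^{2s} - 2^* C e^{2^*s}$ and $(\Psi_u^\mu)'(s)$, both negative for $s > t_u$; and $t_u < 0 \Rightarrow P_\mu(u) = (\Psi_u^\mu)'(0) < 0$ since $0$ lies in the decreasing region. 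Item (4) is the converse: $P_\mu(u) = (\Psi_u^\mu)'(0) < 0$ means $0$ is to the right of $t_u$ (as $(\Psi_u^\mu)'$ is positive on $(-\infty,t_u)$), so $t_u < 0$. For item (3), the $C^1$ dependence $u \mapsto t_u$ follows from the implicit function theorem applied to $(\Psi_u^\mu)'(t_u) = 0$, using $(\Psi_u^\mu)''(t_u) < 0 \ne 0$ and the smoothness of $u \mapsto (A, B, C)$ on $S_a$; alternatively one cites \cite[Lemma 5.3]{So} verbatim.

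\textbf{Main obstacle.} There is essentially no genuine obstacle here — the content is entirely in the algebraic observation that \eqref{hp} makes $\kappa_u > 0$, after which the one-variable calculus is routine and parallels the homogeneous case treated in Proposition \ref{prop: hom}. The only point requiring a little care is making the dependence of $\kappa_u$ on $u$ explicit and uniform enough to run the implicit function theorem cleanly for item (3); but since $\kappa_u > 0$ is an open condition and $u \mapsto \kappa_u$ is $C^1$ on $S_a$, this causes no difficulty. Consequently I would keep this proof short, stating that "the argument is completely analogous to \cite[Lemma 5.3]{So}, the only modification being the replacement of $|\nabla u|_2^2$ by $\kappa_u = |\nabla u|_2^2 - \tfrac{2\mu}{q}|u|_q^q > 0$, positivity of the latter being guaranteed by \eqref{hp}," and then spell out only the sign computations above.
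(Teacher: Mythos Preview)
Your proposal is correct and follows essentially the same approach as the paper: the paper's proof consists precisely of writing $\Psi_u(s) = \bigl(\tfrac12|\nabla u|_2^2 - \tfrac{\mu}{\bar p}|u|_{\bar p}^{\bar p}\bigr)e^{2s} - \tfrac{1}{2^*}|u|_{2^*}^{2^*}e^{2^*s}$ and observing via Gagliardo--Nirenberg and \eqref{hp} that the coefficient of $e^{2s}$ is positive, then referring to \cite[Lemma 6.2]{So} for the remaining elementary calculus. Your proposal simply names this coefficient $\tfrac12\kappa_u$ and spells out the one-variable analysis that the paper leaves implicit.
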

\begin{proof}
The simple proof follows from the fact that
\[
\Psi_u(s) = \left( \frac12 |\nabla u|_2^2 - \frac{\mu}{\bar p} |u|_{\bar p}^{\bar p}\right) e^{2s} - \frac{e^{2^* s}}{2^*} |u|_{2^*}^{2^*},
\]
where
\[
\frac12 |\nabla u|_2^2 - \frac{\mu}{\bar p} |u|_{\bar p}^{\bar p} \ge \left(\frac12 - \frac{\mu}{\bar p} C_{N, \bar p}^{\bar p} a^{\frac4N}\right) |\nabla u|_2^2 >0
\]
having assumed $\mu a^\frac{4}{N} < \alpha$. We refer to \cite[Lemma 6.2]{So} for more details.
\end{proof}

\begin{lemma}\label{lem: E su P cr}
It results that $\displaystyle m(a,\mu):= \inf_{u \in \cP} E(u) >0$.\\
Moreover, there exists $k>0$ sufficiently small such that
\[
0<  \sup_{\overline{A_k}} E < m(a,\mu), \quad \text{and} \quad  u \in \overline{A_k} \implies E(u), P(u) >0,
\]
where $A_k= \left\{u \in S: |\nabla u|_2^2 < k\right\}$. 
\end{lemma}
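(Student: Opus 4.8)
The plan is to reduce the whole statement to elementary one-variable estimates in the quantity $t:=|\nabla u|_2^2$, using the Gagliardo--Nirenberg inequality \eqref{GN ineq} with $p=\bar p$ (so that $\gamma_{\bar p}\bar p=2$ and $(1-\gamma_{\bar p})\bar p=4/N$), the Sobolev inequality \eqref{Sob ineq}, and crucially the smallness hypothesis $\mu a^{4/N}<\alpha=\bar p/(2C_{N,\bar p}^{\bar p})$ from \eqref{hp}--\eqref{alpha cr}. Throughout I set $\delta:=1-\frac{2\mu}{\bar p}C_{N,\bar p}^{\bar p}a^{4/N}$, which lies in $(0,1)$ precisely by the hypothesis.

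First I would identify the energy on $\cP$. For $u\in\cP$ one has $P(u)=0$, that is $|\nabla u|_2^2=|u|_{2^*}^{2^*}+\frac{2\mu}{\bar p}|u|_{\bar p}^{\bar p}$ (recall $\mu\gamma_{\bar p}|u|_{\bar p}^{\bar p}=\frac{2\mu}{\bar p}|u|_{\bar p}^{\bar p}$); substituting this into $E$ makes the $|u|_{\bar p}^{\bar p}$ terms cancel and yields $E(u)=\frac1N|u|_{2^*}^{2^*}$, exactly as for the homogeneous problem of Proposition \ref{prop: hom}. To bound this below, the Gagliardo--Nirenberg inequality gives $\frac{2\mu}{\bar p}|u|_{\bar p}^{\bar p}\le(1-\delta)|\nabla u|_2^2$ for $u\in S$, hence on $\cP$ one gets $|u|_{2^*}^{2^*}=|\nabla u|_2^2-\frac{2\mu}{\bar p}|u|_{\bar p}^{\bar p}\ge\delta|\nabla u|_2^2$, while Sobolev gives $|u|_{2^*}^{2^*}\le\cS^{-2^*/2}|\nabla u|_2^{2^*}$. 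Combining the two inequalities, $\delta|\nabla u|_2^2\le\cS^{-2^*/2}|\nabla u|_2^{2^*}$, so $|\nabla u|_2^2\ge\delta^{(N-2)/2}\cS^{N/2}$, and therefore $E(u)=\frac1N|u|_{2^*}^{2^*}\ge\frac{\delta}{N}|\nabla u|_2^2\ge\frac{(\delta\cS)^{N/2}}{N}>0$ for every $u\in\cP$. Since $\cP\neq\emptyset$ by Lemma \ref{lem: fiber cr}, this proves $m(a,\mu)\ge(\delta\cS)^{N/2}/N>0$.

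Next I would analyze small gradient balls. For any $u\in S$ the same two inequalities give $E(u)\ge\frac{\delta}{2}|\nabla u|_2^2-\frac{1}{2^*\cS^{2^*/2}}|\nabla u|_2^{2^*}$ and likewise $P(u)\ge\delta|\nabla u|_2^2-\cS^{-2^*/2}|\nabla u|_2^{2^*}$. Both right-hand sides are of the form $c_1 t-c_2 t^{2^*/2}$ with $c_1,c_2>0$ and $t=|\nabla u|_2^2>0$ (note $u\in S$ forces $|\nabla u|_2>0$), hence strictly positive once $t$ is below an explicit threshold. Moreover $E(u)\le\frac12|\nabla u|_2^2$ because $\mu>0$, so $\sup_{\overline{A_k}}E\le k/2$. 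Choosing $k>0$ smaller than both thresholds and than $2m(a,\mu)$, and noting $\overline{A_k}\neq\emptyset$ (e.g.\ $s\star u\in A_k$ for $s\to-\infty$), we obtain $u\in\overline{A_k}\Rightarrow E(u),P(u)>0$ and $0<\sup_{\overline{A_k}}E\le k/2<m(a,\mu)$, which is the assertion.

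There is no genuine obstacle in this lemma; the only point that must be tracked is that the hypothesis $\mu a^{4/N}<\alpha$ is used exactly to guarantee $\delta>0$, i.e.\ to keep the coefficient $\delta/2=\frac12-\frac{\mu}{\bar p}C_{N,\bar p}^{\bar p}a^{4/N}$ of the kinetic term positive after absorbing the $L^2$-critical nonlinearity. This is the structural difference from the $L^2$-subcritical Section \ref{sec: sub}, where the competing Gagliardo--Nirenberg term carries the strictly sublinear power $|\nabla u|_2^{\gamma_q q}$ with $\gamma_q q<2$ and no smallness is needed for this part of the geometry. The statement is the Sobolev-critical counterpart of the corresponding lemma in \cite[Section 6]{So}.
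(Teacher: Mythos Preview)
Your proof is correct and follows essentially the same approach the paper intends: the paper's own ``proof'' simply refers to \cite[Lemmas 6.3 and 6.4]{So} with $p=2^*$, and you have written out precisely those computations, exploiting $\gamma_{\bar p}\bar p=2$ to absorb the $L^{\bar p}$ term into the kinetic term via Gagliardo--Nirenberg, which is where the smallness hypothesis $\mu a^{4/N}<\alpha$ enters through the positivity of $\delta$. Your explicit lower bound $m(a,\mu)\ge(\delta\cS)^{N/2}/N$ is a nice bonus that the paper does not state.
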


For the proof it is sufficient to argue as in \cite[Lemma 6.3 and Lemma 6.4]{So} choosing $p=2^*$.

From the above lemmas, it is not difficult to recognize that $E|_S$ has a mountain pass geometry. In order to apply Proposition \ref{prop: PS conv} and recover compactness, we need an estimate from above on $m_r(a,\mu):= \inf_{\cP \cap S_r} E$, where we recall that $S_r$ is the subset of the radial functions in $S$.

\begin{lemma}\label{lem: st above cr}
If $\mu a^{\frac{4}{N}} < \alpha(N,\bar p)$ defined by \eqref{alpha cr}, then $\displaystyle m_r(a,\mu) < \displaystyle{\frac{\cS^\frac{N}2}N}$.
\end{lemma}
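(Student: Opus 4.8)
The plan is to estimate $m_r(a,\mu)$ by constructing an explicit test family in $\cP \cap S_r$ built from truncated Aubin--Talenti bubbles, exactly as in the classical Brezis--Nirenberg argument, and to exploit the fact that the subcritical term $\mu|u|^{\bar p -2}u$ gives a strictly negative contribution that pushes the level below the homogeneous threshold $\cS^{N/2}/N$. First I would recall that, by Lemma~\ref{lem: fiber cr}, for every $u \in S_r$ the fiber $\Psi_u$ attains its (unique, positive) maximum at $t_u \star u \in \cP$, so that
\[
m_r(a,\mu) \le \max_{s \in \R} E(s \star u) = \max_{s \in \R} \Psi_u(s)
\]
for any admissible radial $u$; hence it suffices to exhibit one $u \in S_r$ for which $\max_s \Psi_u(s) < \cS^{N/2}/N$.

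The natural choice is $u = u_\eps$, a radial function obtained by truncating $U_{\eps,0}$ (the extremal in \eqref{def bubble}) with a fixed cutoff and then $L^2$-normalizing to mass $a$; the standard Brezis--Nirenberg expansions give $|\nabla u_\eps|_2^2 = \cS^{N/2} + O(\eps^{N-2})$ and $|u_\eps|_{2^*}^{2^*} = \cS^{N/2} + O(\eps^N)$ after normalization (with the usual logarithmic correction when $N=4$), while for the $L^2$-critical term $|u_\eps|_{\bar p}^{\bar p}$ one gets a power of $\eps$ that, crucially, is of \emph{lower order} than the error terms in the first two quantities — this is precisely the phenomenon that makes the Brezis--Nirenberg trick work in every dimension $N \ge 3$ once the perturbing exponent sits strictly below $2^*$. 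Plugging these asymptotics into the explicit expression
\[
\Psi_{u_\eps}(s) = \Bigl( \tfrac12 |\nabla u_\eps|_2^2 - \tfrac{\mu}{\bar p} |u_\eps|_{\bar p}^{\bar p}\Bigr) e^{2s} - \frac{e^{2^* s}}{2^*} |u_\eps|_{2^*}^{2^*},
\]
maximizing the resulting one-variable function in $s$ (the maximum of $Ae^{2s} - Be^{2^*s}$ is $\tfrac1N (A^{2^*}/B^2)^{1/(2^*-2)} = \tfrac1N A^{N/2} B^{-(N-2)/2}$), and expanding, I would obtain
\[
\max_{s} \Psi_{u_\eps}(s) = \frac{\cS^{N/2}}{N} - c_N\, \mu\, |u_\eps|_{\bar p}^{\bar p} + (\text{higher order})
\]
with $c_N>0$, so that the negative $\mu$-term dominates for $\eps$ small and the strict inequality follows. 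The role of the assumption $\mu a^{4/N} < \alpha(N,\bar p)$ is only to guarantee (via Lemma~\ref{lem: fiber cr}) that $\Psi_{u_\eps}$ indeed has this shape, i.e. that the coefficient $\tfrac12|\nabla u_\eps|_2^2 - \tfrac{\mu}{\bar p}|u_\eps|_{\bar p}^{\bar p}$ is positive; no quantitative smallness beyond this is needed for the strict inequality itself.

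The main obstacle I anticipate is bookkeeping the competing powers of $\eps$ carefully enough — in particular handling the $L^2$-normalization (which itself perturbs all three quantities) and the borderline dimension $N=4$, where $|u_\eps|_{\bar p}^{\bar p} = |u_\eps|_4^4$ carries a $|\log\eps|$ factor that must still be shown to dominate the $O(\eps^2|\log\eps|)$ error from $|\nabla u_\eps|_2^2$. One must verify that after normalization the subcritical contribution remains strictly larger in order of magnitude than the errors in the critical quantities for \emph{all} $N\ge 3$; this is exactly the computation underlying the original Brezis--Nirenberg result and its refinements, so I expect it to go through, but it is the step requiring genuine care rather than a citation.
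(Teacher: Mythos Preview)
Your approach is correct and very close to the paper's: both test with a truncated, $L^2$-normalized Aubin--Talenti bubble $v_\eps$, both reduce the question to showing that the $\bar p$-norm contribution dominates the $O(\eps^{N-2})$ error in the Sobolev quotient, and both end up comparing exactly the quantity $|u_\eps|_{\bar p}^{\bar p}/|u_\eps|_2^{4/N}$ against $\eps^{N-2}$ (constant for $N\ge 5$, $|\log\eps|^{-1/2}$ for $N=4$, $\eps^{2/3}$ for $N=3$). The one genuine difference is packaging: you exploit the special feature $\gamma_{\bar p}\bar p=2$ to write $\Psi_{v_\eps}(s)=A e^{2s}-B e^{2^*s}$ and maximize in closed form, obtaining directly $\max_s\Psi_{v_\eps}=\tfrac1N\bigl((|\nabla v_\eps|_2^2-\tfrac{2\mu}{\bar p}|v_\eps|_{\bar p}^{\bar p})/|v_\eps|_{2^*}^2\bigr)^{N/2}$. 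The paper instead writes $\Psi^\mu_{v_\eps}(t_{\eps,\mu})=\Psi^0_{v_\eps}(t_{\eps,\mu})-\tfrac{\mu}{\bar p}e^{2t_{\eps,\mu}}|v_\eps|_{\bar p}^{\bar p}$, bounds the first piece by $\sup\Psi^0_{v_\eps}=\cS^{N/2}/N+O(\eps^{N-2})$, and separately produces a lower bound on $e^{2t_{\eps,\mu}}$ via the Pohozaev relation; this is less direct here, but is exactly the scheme that survives in the $L^2$-supercritical section where three distinct exponents appear and your closed-form maximization is unavailable.

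One caution: your statement that ``after normalization $|\nabla u_\eps|_2^2=\cS^{N/2}+O(\eps^{N-2})$ and $|u_\eps|_{2^*}^{2^*}=\cS^{N/2}+O(\eps^N)$'' is false as written --- both quantities blow up like negative powers of $|u_\eps|_2$ after normalizing the mass. What is true (and what you actually need) is that the \emph{scale-invariant ratio} $|\nabla v_\eps|_2^2/|v_\eps|_{2^*}^2=|\nabla u_\eps|_2^2/|u_\eps|_{2^*}^2=\cS+O(\eps^{N-2})$, and that the perturbation appearing in your closed-form maximum is $\tfrac{2\mu}{\bar p}\,|v_\eps|_{\bar p}^{\bar p}/|v_\eps|_{2^*}^2 = \tfrac{2\mu a^{4/N}}{\bar p}\,|u_\eps|_{\bar p}^{\bar p}/(|u_\eps|_2^{4/N}|u_\eps|_{2^*}^2)$. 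So the correct expansion is not of the additive form $\cS^{N/2}/N - c_N\mu|u_\eps|_{\bar p}^{\bar p}+\ldots$ but rather $\tfrac1N(\cS + O(\eps^{N-2}) - c\,\mu a^{4/N}|u_\eps|_{\bar p}^{\bar p}/|u_\eps|_2^{4/N})^{N/2}$; once you track this you arrive at precisely the paper's final comparison.
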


\begin{proof}
Let $U_\eps$ be defined by
\begin{equation}\label{def bub 2}
U_\eps(x) := \left( \frac{\eps}{\eps^2 + |x|^2} \right)^\frac{N-2}2
\end{equation}
(up to a scalar factor, $U_\eps$ is the bubble centered in the origin, with concentration parameter $\eps>0$, defined in \eqref{def bubble}). Let also $\varphi \in C^\infty_c(\R^N)$ be a radial cut-off function with $\varphi \equiv 1$ in $B_1$, $\varphi \equiv 0$ in $B_2^c$, and $\varphi$ radially decreasing. We define
\begin{equation}\label{def u e v}
u_\eps(x):= \varphi(x) U_\eps(x), \quad \text{and} \quad v_\eps(x):= a\frac{u_\eps(x)}{|u_\eps|_2}.
\end{equation}
Notice that $u_\eps \in C^\infty_c(\R^N)$, and $v_\eps \in S_r$. Therefore, by Lemma \ref{lem: fiber cr} 
\[
m_r(a,\mu) = \inf_{\cP_{a,\mu} \cap S_r} E_\mu \le E_\mu(t_{v_\eps,\mu} \star v_\eps) = \max_{s \in \R} E_\mu( s \star v_\eps) \qquad \forall \eps>0,
\]
and hence in what follows we focus on an upper estimate of $\max_{s \in \R} E_\mu( s \star v_\eps) = \max_{s \in \R} \Psi_{v_\eps}^\mu(s)$. We split the argument in several steps for the reader's convenience.

\medskip

\noindent \textbf{Step 1)} \emph{Estimate on $\sup_{\R} \Psi_{v_\eps}^0$}. If $\mu=0$ it is possible to make explicit computations using \eqref{def psi 0}, as in Section \ref{sec: pre}: we have that $\Psi_{v_\eps}^0$ has a unique critical point $t_{\eps,0}$, which is a strict maximum point, given by
\begin{equation}\label{def t_0}
e^{t_{\eps,0}}= \left( \frac{|\nabla v_\eps|_2^2}{|v_\eps|_{2^*}^{2^*}} \right)^\frac{1}{2^*-2}. 
\end{equation}
The maximum level is
\begin{equation}\label{stima 0}
\begin{split}
\Psi_{v_\eps}^0(t_{\eps,0}) 
& = \frac1N \left( \frac{|\nabla v_\eps|_2^{2}}{|v_\eps|_{2^*}^{ 2}}\right)^\frac{2^*}{2^*-2} =  \frac1N \left( \frac{|\nabla u_\eps|_2^{2}}{|u_\eps|_{2^*}^{2}}\right)^\frac{N}2   = \frac1N \left( \frac{K_{1} + O(\eps^{N-2})}{K_{2} + O(\eps^{N})}\right)^\frac{N}2 \\
& = \frac{1}{N} \left( \frac{K_{1}}{K_{2}} + O(\eps^{N-2}) \right)^\frac{N}{2}= \frac{\cS^\frac{N}{2}}N + O(\eps^{N-2})
\end{split}
\end{equation}
as $\eps \to 0$, where we used the asymptotic estimates collected in Lemma \ref{lem: app} for $N \ge 4$, and the fact that $K_1/K_2 = \cS$. If $N=3$, the same estimate holds eventually, using $|u_\eps|_{2^*}^{2} = K_2 + O(\eps^2)$ instead of $|u_\eps|_{2^*}^{2} = K_2 + O(\eps^N)$.
\medskip

\noindent \textbf{Step 2)} \emph{Estimate on $t_{\eps,\mu}$}. We denote by $t_{\eps,\mu}:= t_{v_\eps,\mu}$, the unique maximum point of $\Psi_{v_\eps}^\mu$ (see Lemma \ref{lem: fiber cr}). By definition $P_\mu(t_{\eps,\mu} \star v_{\eps}) = 0$, and this implies that 
\begin{equation*}
\begin{split}
e^{(2^*-2) t_{\eps,\mu}} = \frac{|\nabla v_\eps|_2^2}{|v_\eps|_{2^*}^{2^*}} - \frac{2\mu}{\bar p} \cdot \frac{|v_\eps|_{\bar p}^{\bar p}}{|v_\eps|_{2^*}^{2^*}} \ge  \left( 1- \frac{2}{\bar p} C_{N, \bar p}^{\bar p} \mu a^\frac{4}{N}  \right) \frac{|\nabla v_\eps|_2^2}{|v_\eps|_{2^*}^{2^*}}.
\end{split}
\end{equation*}
%
%
%
%
%
%
%
%

\medskip

\noindent \textbf{Step 3)} \emph{Estimate on $\sup_\R \Psi^\mu_{v_\eps}$}. By steps 1 and 2  
\begin{equation}\label{stima finale}
\begin{split}
\sup_{\R} \Psi^\mu_{v_\eps} &=  \Psi^\mu_{v_\eps}(t_{\eps,\mu}) = \Psi^0_{v_\eps}(t_{\eps,\mu}) - \frac{\mu}{{\bar p}} e^{2 t_{\eps,\mu}} |v_\eps|_{\bar p}^{\bar p} \\
& \le \sup_\R \Psi^0_{v_\eps} - \frac{\mu}{\bar p}
\left( 1- \frac{2}{\bar p} C_{N, \bar p}^{\bar p} \mu a^\frac{4}{N}  \right)^\frac{2}{2^*-2} \left(\frac{|\nabla v_\eps|_2^2}{|v_\eps|_{2^*}^{2^*}}\right)^\frac{2}{2^*-2} |v_\eps|_{\bar p}^{\bar p} \\
& \le \frac{1}{N} \cS^\frac{N}2 + O(\eps^{N-2}) - \frac{1}{\bar p} \left( 1- \frac{2}{\bar p} C_{N, \bar p}^{\bar p} \mu a^\frac{4}{N}  \right)^\frac{2}{2^*-2} \mu a^\frac4N \frac{|u_\eps|_{\bar p}^{\bar p} |\nabla u_\eps|_2^\frac4{2^*-2}}{|u_\eps|_2^{\frac{4}{N}}  |u_\eps|_{2^*}^\frac{2 2^*}{2^*-2}} \\
&\le \frac{1}{N} \cS^\frac{N}2 + O(\eps^{N-2}) - C_{N,a,\mu} \frac{|u_\eps|_{\bar p}^{\bar p}}{|u_\eps|_2^{\frac{4}{N}}},
\end{split}
\end{equation}
where $C_{N,a,\mu}>0$ is a positive constant independent of $\eps$, and we used Lemma \ref{lem: app}. Always by Lemma \ref{lem: app}, we have
\[
\frac{|u_\eps|_{\bar p}^{\bar p}}{|u_\eps|_2^{\frac{4}{N}}} \ge  \begin{cases}  C \eps^{N-\frac{N-2}2 \bar p - \frac4N} = C & \text{if $N \ge 5$} \\
C \eps^{4-\bar p -1} |\log \eps|^{- \frac12} = C |\log \eps|^{-\frac12}  & \text{if $N=4$}  \\
C \eps^{3-\frac{\bar p}2  - \frac{2}3} = C \eps^\frac23 & \text{if $N=3$}.
\end{cases} 
\]
In particular, any term of order $\eps^{N-2}$ is negligible with respect to this ratio for $\eps$ small, and hence coming back to \eqref{stima finale} we deduce that
\[
\sup_{\R} \Psi^\mu_{v_\eps} < \frac{\cS^\frac{N}2}{N}
\]
for any $\eps>0$ small enough, which in turn gives the thesis of the lemma. 
%
%
%
%
%
%
%
%
%
%
\end{proof}

Now we need a technical result. We recall that $T_u S$ denotes the tangent space to $S$ in $u$.
\begin{lemma}{\cite[Lemma 3.6]{BaSo2}}\label{lem: tg}
For $u\in S_a$ and $s\in\R$ the map
\[
T_{u} S \to T_{s \star u} S, \quad \varphi \mapsto s \star \varphi
\]
is a linear isomorphism, with inverse $\psi\mapsto (-s) \star \psi$. 
\end{lemma}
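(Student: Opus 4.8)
The plan is to reduce the statement to three elementary properties of the $\star$-action: linearity, the group law $s_1\star(s_2\star u)=(s_1+s_2)\star u$ together with $0\star u=u$, and invariance of the $L^2$ inner product. The last is just a change of variables $y=e^sx$: for $u,\varphi\in L^2(\R^N,\C)$,
\[
\int_{\R^N}\overline{(s\star u)}\,(s\star\varphi)=e^{Ns}\int_{\R^N}\overline{u(e^sx)}\,\varphi(e^sx)\,dx=\int_{\R^N}\bar u\,\varphi,
\]
so in particular $|s\star\varphi|_2=|\varphi|_2$. From $\|s\star\varphi\|^2=|\varphi|_2^2+e^{2s}|\nabla\varphi|_2^2$ one sees that $s\star\cdot$ is a bounded linear operator on $H$ (and on $H^1$), and the group law gives $(-s)\star(s\star\varphi)=\varphi$ and $s\star((-s)\star\psi)=\psi$; hence $s\star\cdot$ is a linear isomorphism of $H$ onto itself with inverse $(-s)\star\cdot$.

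Next I would identify the tangent spaces. Writing $G(u):=|u|_2^2$, the manifold $S=S_a$ is the regular level set $\{G=a^2\}$ (regularity because $dG(u)[\varphi]=2\Re\int_{\R^N}\bar u\varphi$ does not vanish identically for $u\in S_a$), so $T_uS=\ker dG(u)=\{\varphi\in H:\Re\int_{\R^N}\bar u\varphi=0\}$. The inner-product invariance above yields $\Re\int_{\R^N}\overline{(s\star u)}(s\star\varphi)=\Re\int_{\R^N}\bar u\,\varphi$ for every $\varphi$, so $\varphi\in T_uS$ if and only if $s\star\varphi\in T_{s\star u}S$. Therefore the isomorphism $s\star\cdot$ of $H$ restricts to a linear isomorphism $T_uS\to T_{s\star u}S$, whose inverse is the restriction of $(-s)\star\cdot$ (which maps $T_{s\star u}S$ onto $T_uS$ by the same computation). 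This is exactly the assertion.

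There is essentially no genuine obstacle here; the only mild point requiring care is the complex-valued setting, where $T_uS$ must be read as the kernel of the \emph{real}-linear functional $\varphi\mapsto\Re\int_{\R^N}\bar u\varphi$ rather than as a complex subspace — but this causes no difficulty, since $s\star\cdot$ commutes both with multiplication by complex scalars and with complex conjugation, so it respects the relevant real structure. (If one prefers, the same proof works verbatim in $H^1(\R^N,\R)$ with $\Re$ deleted.)
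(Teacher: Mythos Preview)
Your proof is correct and is the natural argument: linearity and the group law make $s\star\cdot$ a linear automorphism of $H$, and the change-of-variables identity $\Re\int\overline{(s\star u)}(s\star\varphi)=\Re\int\bar u\,\varphi$ shows that it carries $T_uS$ onto $T_{s\star u}S$. Note that the paper does not supply its own proof of this lemma; it simply quotes the result from \cite[Lemma 3.6]{BaSo2}, so your write-up is a complete substitute rather than a variant.
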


We are finally ready for the:

\begin{proof}[Proof of Theorem \ref{thm: main} for $q = \bar p$]
Let $k>0$ be defined by Lemma \ref{lem: E su P cr}. We follow the strategy firstly introduced in \cite{Jea}, considering the augmented functional $\tilde E: \R \times H^1 \to \R$ defined by 
\begin{equation}\label{def aug}
\tilde E(s,u) := E(s \star u) = \frac{e^{2s}}{2} \int_{\R^N}|\nabla u|^2 - \mu \frac{e^{\gamma_{\bar p} \bar p s}}{\bar p} \int_{\R^N} |u|^q -  \frac{e^{2^*s}}{2^*} \int_{\R^N} |u|^{2^*},
\end{equation}
and look at the restriction $\tilde E|_{\R \times S}$. Notice that $\tilde E$ is of class $C^1$. Moreover, since $\tilde E$ is invariant under rotations applied to $u$, a Palais-Smale sequence for $\tilde E|_{\R \times S_r}$ is a Palais-Smale sequence for $E|_{\R \times S}$. Denoting by $E^c$ the closed sublevel set $\{u \in S: E(u) \le c\}$, we introduce the minimax class
\begin{equation}\label{def gamma}
\Gamma:= \left\{ \gamma = (\alpha,\beta) \in C([0,1], \R \times S_{r}): \ \gamma(0) \in (0,\overline{A_k}), \ \gamma(1) \in (0,E^{0})\right\},
\end{equation}
with associated minimax level
\[
\sigma(a,\mu):= \inf_{\gamma \in \Gamma} \max_{(s,u) \in \gamma([0,1])} \tilde E(s,u).
\]
Let $u \in S_r$. Since $|\nabla (s \star u)|_2 \to 0^+$ as $ s \to -\infty$, and $E(s \star u) \to -\infty$ as $s \to +\infty$, there exist $s_0 \ll -1$ and $s_1 \gg 1$ such that 
\begin{equation}\label{gam u}
\gamma_u: \tau \in [0,1] \mapsto (0,((1-\tau) s_0 + \tau s_1) \star u) \in \R \times S_r
\end{equation}
is a path in $\Gamma$ (the continuity follows from \eqref{cont star}). Then $\sigma(a,\mu)$ is a real number.

Now, for any $\gamma =(\alpha,\beta) \in \Gamma$, let us consider the function
\[
P_\gamma:\tau \in [0,1] \mapsto P(\alpha(\tau) \star \beta(\tau)) \in \R.
\]
We have $P_\gamma(0) = P(\beta(0)) >0$, by Lemma \ref{lem: E su P cr}, and we claim that $P_\gamma(1) =P(\beta(1))<0$: indeed, since $\Psi_{\beta(1)}(s)>0$ for every $s \in (-\infty,t_{\beta(1)}]$, and $\Psi_{\beta(1)}(0) = E(\beta(1)) \le 0$, we have that $t_{\beta(1)} <0$. Then the claim follows by Lemma \ref{lem: fiber cr}. Moreover, the map $\tau \mapsto \alpha(\tau) \star \beta(\tau)$ is continuous from $[0,1]$ to $H^1$ by \eqref{cont star}, and hence we deduce that there exists $\tau_\gamma \in (0,1)$ such that $P_\gamma(\tau_\gamma) = 0$, namely $\alpha(\tau_\gamma) \star \beta(\tau_\gamma) \in \cP$; this implies that 
\[
\max_{\gamma([0,1])} \tilde E \ge \tilde E(\gamma(\tau_\gamma)) =E( \alpha(\tau_\gamma) \star \beta(\tau_\gamma)) \ge \inf_{\cP\cap S_r} E = m_r(a,\mu),
\]
and consequently $\sigma(a,\mu) \ge m_r(a,\mu)$. On the other hand, if $u \in \cP_- \cap S_r$, then $\gamma_u$ defined in \eqref{gam u} is a path in $\Gamma$ with 
\[
E(u) = \max_{\gamma_u([0,1])} \tilde E \ge \sigma(a,\mu),
\]
whence the reverse inequality $m_r(a,\mu) \ge \sigma(a,\mu)$ follows. Combining this with Lemmas \ref{lem: E su P cr}, we infer that
\begin{equation}\label{stima infmax}
\sigma(a,\mu) = m_r(a,\mu) > \sup_{(\overline{A_k}  \cup E^{0}) \cap S_r} E = \sup_{((0,\overline{A_k})  \cup (0,E^{0})) \cap (\R \times S_r)} \tilde E.
\end{equation}
Using the terminology in \cite[Section 5]{Gho}, this means that $\{\gamma([0,1]): \ \gamma \in \Gamma\}$ is a homotopy stable family of compact subsets of $\R \times S_r$ with extended closed boundary $(0,\overline{A_k}) \cup (0,E^{0})$, and that the superlevel set $\{\tilde E \ge \sigma(a,\mu)\}$ is a dual set for $\Gamma$, in the sense that assumptions (F'1) and (F'2) in \cite[Theorem 5.2]{Gho} are satisfied. Therefore, taking any minimizing sequence $\{\gamma_n=(\alpha_n,\beta_n)\} \subset \Gamma_n$ for $\sigma(a,\mu)$ with the property that $\alpha_n \equiv 0$ and $\beta_n(\tau) \ge 0$ a.e. in $\R^N$ for every $\tau \in [0,1]$\footnote{Notice that, if $\{\gamma_n=(\alpha_n,\beta_n)\} \subset \Gamma$ is a minimizing sequence, then also $\{(0,\alpha_n \star |\beta_n|)\}$ has the same property.}, there exists a Palais-Smale sequence $\{(s_n,w_n)\} \subset \R \times S_r$ for $\tilde E|_{\R \times S_r}$ at level $\sigma(a,\mu)$, that is 
\begin{equation}\label{1892}
\pa_s \tilde E(s_n,w_n) \to 0 \quad \text{and} \quad \|\pa_u \tilde E(s_n,w_n)\|_{(T_{w_n} S_{r})^*} \to 0 \quad \text{as $n \to \infty$},
\end{equation}
with the additional property that
\begin{equation}\label{1893}
|s_n|  + \dist_{H^1}(w_n, \beta_n([0,1])) \to 0 \qquad \text{as $n \to \infty$}.
\end{equation}
By \eqref{def aug}, the first condition in \eqref{1892} reads $P(s_n \star w_n) \to 0$, while the second condition gives
\begin{equation}\label{9105}
dE(s_n \star w_n)[s_n \star \varphi] = o(1) \|\varphi\| = o(1) \|s_n \star \varphi\| \quad \text{as $n \to \infty$, for every $\varphi \in T_{w_n} S_r$},
\end{equation}
for every $\varphi \in T_{w_n} S_r$, with $o(1) \to 0$ as $n \to \infty$; in the last equality, we used that $\{s_n\}$ is bounded from above and from below, due to \eqref{1893}.
Let then $u_n:= s_n \star w_n$. By Lemma \ref{lem: tg}, equation \eqref{9105} establishes that $\{u_n\} \subset S_{r}$ is a Palais-Smale sequence for $E|_{S_{r}}$ (thus a PS sequence for $E|_S$, since the problem is invariant under rotations) at level $\sigma(a,\mu) =m_r(a,\mu)$, with $P(u_n) \to 0$. By Lemmas \ref{lem: E su P cr} and \ref{lem: st above cr}, $m_r(a,\mu) \in (0, \cS^\frac{N}2/N)$, and hence one of the two alternatives in Proposition \ref{prop: PS conv} occurs. Let us assume that alternative ($i$) takes place; then there exists $\tilde u \in H^1$ such that $u_n \weak \tilde u$ weakly in $H^1$, but not strongly, and 
\begin{equation}\label{E weak lim cr}
E(\tilde u) \le m_r(a,\mu) - \frac{\cS^\frac{N}2}N < 0.
\end{equation}
Moreover, the limit $\tilde u \not \equiv 0$ solves \eqref{stat com} for some $\lambda<0$, and hence by the Pohozaev identity $P(\tilde u) = 0$. But then $E(\tilde u) =  |\tilde u|_{2^*}^{2^*}/N>0$, a contradiction with \eqref{E weak lim cr}. This shows that necessarily alternative ($ii$) in Proposition \ref{prop: PS conv} holds, namely $u_n \to \tilde u$ strongly in $H^1$, and $\tilde u$ is a real valued radial normalized solution to \eqref{stat com}-\eqref{norm} for some $\tilde \lambda<0$, with energy $m_r(a,\mu)$. Recalling that $\beta_n(\tau) \ge 0$ a.e. in $\R^N$ for every $\tau$, condition \eqref{1893} and the convergence imply that $\tilde u$ is also non-negative, and hence positive by the maximum principle. It remains only to prove that $\tilde u$ is a ground state. Since any normalized solutions stays on $\cP$, and $E(\tilde u) = m_r(a,\mu) = \inf_{\cP \cap S_r} E$, it is sufficient to check that $ \inf_{\cP \cap S_r} E = \inf_{\cP} E = m(a,\mu)$. Suppose by contradiction that this is not the case, that is that there exists $u \in \cP \setminus S_r$ with $E(u) < \inf_{\cP \cap S_r} E$. Then we let $v:= |u|^*$, the symmetric decreasing rearrangement of the modulus of $u$, which lies in $S_r$. By standard properties $|\nabla v|_2 \le |\nabla u|_2$, $E(v) \le E(u)$, and $P(v) \le P(u) = 0$. If $P(v) = 0$ we have a contradiction, and hence we can assume that $P(v) <0$. In this case, from Lemma \ref{lem: fiber cr} we know that $t_v<0$. But then we obtain again a contradiction in the following way:
\begin{align*}
E(u) &< E(t_v \star v) = \frac{e^{2^* t_v}}N |v|_{2^*}^{2^*} = \frac{e^{2^* t_v}}N |u|_{2^*}^{2^*} = e^{2^* t_v} E(u) < E(u),
\end{align*}
where we used the fact that $t_v \star v$ and $u$ lies in $\cP$. This proves that $m_r(a,\mu)=m(a,\mu)$, and hence $\tilde u$ is a ground state. 
\end{proof}

\section{$L^2$-supercritical perturbation}\label{sec: sup}

In this section we prove Theorem \ref{thm: main} for $2+4/N < q <2^*$. To be precise, we show that the theorem holds with 
\begin{equation}\label{alpha sup}
\alpha(N,q) = +\infty \quad \text{if $N=3,4$, and } \quad \alpha(N,q) = \frac{\cS^{\frac{N}{4}(1-\gamma_q)q}}{\gamma_q} \quad \text{if $N \ge 5$}.
\end{equation}
Throughout the proof, $a$ and $\mu$ satisfying \eqref{hp} with this definition of $\alpha$ will be fixed, and hence we often omit the dependence on these quantities. We consider once again the Pohozaev manifold $\cP$, defined in \eqref{def P}, and the decomposition $\cP= \cP_+\cup \cP_0 \cup \cP_-$, see \eqref{split P}. If there exists $u \in \cP_0$, then combining $\Psi_u'(0) = 0$ and $\Psi_u''(0) = 0$ we deduce that
\[
\underbrace{(2-q \gamma_q)}_{<0 \text{ since $q> \bar p$}} \mu \gamma_q |u|_q^q = \underbrace{(2^*-2)}_{>0} |u|_{2^*}^{2^*},
\]
whence necessarily $u \equiv 0$, in contradiction with the fact that $u \in S$. This shows that $\cP_0= \emptyset$. At this point we can prove that  $\cP$ is a manifold in a standard way, see \cite[Lemma 5.2]{So}.

\begin{lemma}\label{lem: fiber sup}
For every $u \in S$, there exists a unique $t_u \in \R$ such that $t_u \star u \in \cP$. $t_u$ is the unique critical point of the function $\Psi_u$, and is a strict maximum point at positive level. Moreover:
\begin{itemize}
\item[1)] $\cP= \cP_-$. 
\item[2)] $\Psi_u$ is strictly decreasing and concave on $(t_u, +\infty)$, and $t_u<0$ implies $P(u)<0$.
\item[3)] The map $u \in S \mapsto t_u \in \R$ is of class $C^1$.
\item[4)] If $P(u)<0$, then $t_u<0$.
\end{itemize}
\end{lemma}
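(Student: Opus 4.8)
The plan is to study the fiber map $\Psi_u^\mu(s) = E_\mu(s\star u)$ directly, exactly as in the $L^2$-critical case (Lemma~\ref{lem: fiber cr}) and as in \cite[Lemma~6.2]{So}. First I would write out, for fixed $u\in S$,
\[
\Psi_u^\mu(s) = \frac{e^{2s}}{2}|\nabla u|_2^2 - \mu\frac{e^{q\gamma_q s}}{q}|u|_q^q - \frac{e^{2^* s}}{2^*}|u|_{2^*}^{2^*},
\]
and record that now $2 < q\gamma_q < 2^*$, since $q > \bar p = 2+4/N$. So all three exponents $2$, $q\gamma_q$, $2^*$ appearing in the three terms are distinct, and the two higher-order terms both carry a minus sign. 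Consequently $\Psi_u^\mu(s)\to 0^+$ as $s\to-\infty$ (the $e^{2s}$ term dominates near $-\infty$ and is positive) — wait, more carefully: as $s\to-\infty$ all terms vanish, and the leading behaviour is governed by $e^{2s}$, so $\Psi_u^\mu(s) = \frac{e^{2s}}{2}|\nabla u|_2^2(1+o(1)) > 0$ for $s$ very negative; and $\Psi_u^\mu(s)\to-\infty$ as $s\to+\infty$ because the $e^{2^* s}$ term dominates. Hence $\Psi_u^\mu$ attains a positive global maximum at some point, proving the existence of a maximum point at positive level and $\cP \neq \emptyset$.

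The core is \emph{uniqueness} of the critical point. I would compute
\[
(\Psi_u^\mu)'(s) = e^{2s}|\nabla u|_2^2 - \mu\gamma_q e^{q\gamma_q s}|u|_q^q - e^{2^* s}|u|_{2^*}^{2^*} = P_\mu(s\star u),
\]
so critical points of $\Psi_u^\mu$ correspond to $s\star u \in \cP$ (Proposition~\ref{prop: psi P}). Dividing $(\Psi_u^\mu)'(s)=0$ by $e^{2s}>0$, a critical point solves
\[
|\nabla u|_2^2 = \mu\gamma_q e^{(q\gamma_q-2)s}|u|_q^q + e^{(2^*-2)s}|u|_{2^*}^{2^*} =: \phi(s).
\]
Since $q\gamma_q - 2 > 0$ and $2^* - 2 > 0$, the right-hand side $\phi(s)$ is a sum of two strictly increasing exponentials (both coefficients nonnegative, the $|u|_{2^*}^{2^*}$ one strictly positive as $u\in S$), hence strictly increasing from $0$ to $+\infty$; thus it equals the constant $|\nabla u|_2^2 > 0$ at exactly one value of $s$, which I call $t_u$. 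This gives uniqueness of the critical point and therefore $\cP = \cP_-$ (since a unique maximum forces $(\Psi_u^\mu)''(t_u)\le 0$, and $(\Psi_u^\mu)''(t_u)=0$ is ruled out by $\cP_0=\emptyset$, already established). For item~2), I would check that $(\Psi_u^\mu)''(s) = 2e^{2s}|\nabla u|_2^2 - \mu q\gamma_q^2 e^{q\gamma_q s}|u|_q^q - 2^* e^{2^* s}|u|_{2^*}^{2^*}$ is strictly decreasing in $s$ on $\{(\Psi_u^\mu)'\le 0\}$ — or more simply argue monotonicity of $\Psi_u^\mu$ on $(t_u,+\infty)$ from the fact that $(\Psi_u^\mu)'$ has the single zero $t_u$ and is negative afterward, and concavity there by comparing the exponents; then $t_u<0 \Rightarrow P_\mu(u) = (\Psi_u^\mu)'(0) < 0$ since $0$ lies to the right of the maximum. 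Item~4) is the converse: $P_\mu(u) = (\Psi_u^\mu)'(0) < 0$ places $0$ in the region where $\Psi_u^\mu$ is strictly decreasing, i.e. $0 > t_u$.

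For item~3), the $C^1$-dependence $u\mapsto t_u$, I would apply the implicit function theorem to the map $(s,u)\mapsto (\Psi_u^\mu)'(s) = P_\mu(s\star u)$ on $\R\times S$, whose $s$-derivative at $(t_u,u)$ is $(\Psi_u^\mu)''(t_u)$; this is nonzero because $(\Psi_u^\mu)''(t_u) = 0$ would put $u$ (after the dilation) in $\cP_0=\emptyset$, contradiction. Smoothness of $P_\mu$ and of the $\star$-action gives the claim. The main obstacle, such as it is, is purely bookkeeping: making sure the sign/monotonicity analysis of $\phi$ and of $(\Psi_u^\mu)''$ is tight enough to exclude a second critical point and to pin down concavity on $(t_u,+\infty)$ — but with $2 < q\gamma_q < 2^*$ this is exactly the favourable configuration (one convex term, two concave terms), so the structure is identical to the purely $L^2$-supercritical case in \cite[Lemma~6.2]{So} and \cite[Lemma~7.3]{So}, and I would simply reference that. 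No genuinely new difficulty arises here; the delicate point of the whole section is the compactness of Palais--Smale sequences, handled separately in Proposition~\ref{prop: PS conv}.
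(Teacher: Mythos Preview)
Your proposal is correct and follows essentially the same approach as the paper: both analyze the fiber map $\Psi_u^\mu$, prove uniqueness of the critical point by rewriting $(\Psi_u^\mu)'(s)=0$ as $|\nabla u|_2^2=\phi(s)$ with $\phi$ strictly increasing from $0$ to $+\infty$ (using $q\gamma_q>2$), deduce $\cP=\cP_-$ from $\cP_0=\emptyset$, handle concavity on $(t_u,\infty)$ via the analogous inflection-point argument applied to $(\Psi_u^\mu)''$, and obtain the $C^1$ dependence by the implicit function theorem with $\partial_s\Phi(t_u,u)=(\Psi_u^\mu)''(t_u)<0$. The only cosmetic difference is that the paper spells out the inflection-point step slightly more explicitly, whereas you gesture at ``comparing the exponents''; tightening that one line would make your argument match the paper's.
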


%
%
%

\begin{proof}
Clearly $\Psi_u(s) \to 0^+$ as $s \to -\infty$, and $\Psi_u(s) \to -\infty$ as $s \to +\infty$, for every $u \in S_r$. Therefore, $\Psi_u$ has a global maximum point $t_u$ at positive level. To show that this is the unique critical point of $\Psi_u$, we observe that $\Psi_u'(s) = 0$ if and only if
\begin{equation}\label{19112}
\mu \gamma_q |u|_q^q e^{(\gamma_q q-2)s} +  |u|_{2^*}^{2^*} e^{(2^*-2) s} = |\nabla u|_2^2.
\end{equation}
The left hand side, as function of $s \in \R$, is positive, continuous, monotone increasing, with limits $0^+$ and $+\infty$ as $s \to -\infty$ and $s \to +\infty$, respectively. Therefore, equation \eqref{19112} has exactly one solution. By maximality $\Psi_u''(t_u) \le 0$, and since $t_u \star u \in \cP$ and $\cP_0 = \emptyset$, we deduce that $t_u \star u \in \cP_-$. In particular, this and Proposition \ref{prop: psi P} imply that $\cP= \cP_-$.

The same argument used to prove the existence and uniqueness of a critical point can also be used on $\Psi_u''$ to check that $\Psi_u$ has exactly one inflection point. From this, point (2) in the thesis follows. 

Regarding point (3), as in \cite[Lemma 5.3]{So} we apply the implicit function theorem: we let $\Phi(s,u) = \Psi_u'(s)$, and observe that $\Phi$ is of class $C^1$ in the two variables $(s,u) \in \R \times H^1(\R^N)$, $\Phi(t_u,u) = 0$, and $\pa_s\Phi(t_u,u) = \Psi_u''(t_u) <0$. Therefore, $u \in H^1 \mapsto t_u$ is of class $C^1$.

Finally, for point (4) we observe that $\Psi_u'(s) <0$ if and only if $s>t_u$. Therefore, if $P(u) = \Psi_u'(0)<0$, then $t_u<0$.
\end{proof}

\begin{lemma}
It results that $\displaystyle m(a,\mu):= \inf_{u \in \cP} E(u) >0$.
\end{lemma}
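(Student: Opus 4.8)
The plan is to estimate $E_\mu$ from below on the manifold $\cP_{a,\mu}$ using the Pohozaev constraint $P_\mu(u)=0$ together with the Gagliardo-Nirenberg and Sobolev inequalities, and then show that the lower bound is strictly positive. Since $\gamma_q q>2$ in the $L^2$-supercritical case, for $u\in\cP_{a,\mu}$ one has
\[
E_\mu(u)=E_\mu(u)-\frac{1}{\gamma_q q}P_\mu(u)=\left(\frac12-\frac{1}{\gamma_q q}\right)|\nabla u|_2^2+\left(\frac{1}{\gamma_q q}-\frac{1}{2^*}\right)|u|_{2^*}^{2^*},
\]
and both coefficients are positive because $2<\gamma_q q<2^*$. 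Hence $E_\mu(u)\ge c\,|\nabla u|_2^2$ for a positive constant $c=c(N,q)$, so it suffices to bound $|\nabla u|_2^2$ away from zero on $\cP_{a,\mu}$.

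First I would exploit the Pohozaev identity directly: for $u\in\cP_{a,\mu}$,
\[
|\nabla u|_2^2=\mu\gamma_q|u|_q^q+|u|_{2^*}^{2^*}\le \mu\gamma_q C_{N,q}^q a^{(1-\gamma_q)q}|\nabla u|_2^{\gamma_q q}+\cS^{-\frac{2^*}{2}}|\nabla u|_2^{2^*},
\]
using Gagliardo-Nirenberg on the $L^q$ term and Sobolev on the $L^{2^*}$ term. Dividing through by $|\nabla u|_2^2$ (which is nonzero since $u\in S_a$) and setting $t:=|\nabla u|_2^2>0$ gives
\[
1\le \mu\gamma_q C_{N,q}^q a^{(1-\gamma_q)q}\,t^{\frac{\gamma_q q-2}{2}}+\cS^{-\frac{2^*}{2}}\,t^{\frac{2^*-2}{2}}.
\]
Both exponents on the right are positive, so the right-hand side tends to $0$ as $t\to 0^+$; therefore there exists $\delta=\delta(N,q,a,\mu)>0$ such that $t\ge\delta$ for every $u\in\cP_{a,\mu}$. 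Combined with the first display, this yields $m(a,\mu)=\inf_{\cP_{a,\mu}}E_\mu\ge c\,\delta>0$.

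The main point to be careful about is simply that the infimum is over a nonempty set — but Lemma \ref{lem: fiber sup} guarantees $\cP_{a,\mu}=\cP_{a,\mu}^-\neq\emptyset$, since for every $u\in S_a$ the fiber map $\Psi_u^\mu$ has a (maximum) critical point $t_u$ with $t_u\star u\in\cP_{a,\mu}$. No smallness of $\mu a^{(1-\gamma_q)q}$ is actually needed for \emph{this} lemma: the lower bound $t\ge\delta$ holds for any admissible parameters, because the right-hand side above is a sum of two strictly increasing powers of $t$ vanishing at $0$, so it crosses the value $1$ at a positive threshold regardless of the coefficients. (Assumption \eqref{hp}, with $\alpha$ as in \eqref{alpha sup}, will instead be used later to force $m(a,\mu)<\cS^{N/2}/N$, which is where the compactness argument of Proposition \ref{prop: PS conv} requires it.) Thus the proof is essentially a one-line convexity-of-powers observation once the algebraic identity for $E_\mu$ on $\cP_{a,\mu}$ is written down; I anticipate no real obstacle here.
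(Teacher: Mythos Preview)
Your proof is correct and follows essentially the same approach as the paper: both bound $|\nabla u|_2$ away from zero on $\cP_{a,\mu}$ via the Pohozaev constraint combined with the Gagliardo--Nirenberg and Sobolev inequalities, and then use a linear combination $E_\mu - \alpha P_\mu$ with positive coefficients to conclude. The only cosmetic difference is that the paper takes $\alpha=\tfrac12$ (giving $E_\mu(u)=\tfrac{\mu}{q}(\tfrac{\gamma_q q}{2}-1)|u|_q^q+\tfrac1N|u|_{2^*}^{2^*}$, which requires the extra observation that $\inf_\cP(|u|_q^q+|u|_{2^*}^{2^*})>0$), whereas your choice $\alpha=\tfrac{1}{\gamma_q q}$ lands directly on $|\nabla u|_2^2$ and is marginally cleaner.
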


\begin{proof}
If $P(u)=0$, then by the Gagliardo-Nirenberg and the Sobolev inequalities 
\[
|\nabla u|_2^2 = \mu \gamma_q |u|_q^q + |u|_{2^*}^{2^*} \le \gamma_q C_{N,q}^q \mu a^{(1-\gamma_q) q} |\nabla u|_2^{\gamma_q q} + \cS^{-\frac{2^*}2} |\nabla u|_2^{2^*},
\]
whence dividing by $|\nabla u|_2^2$ (this is possible since $u \in S$, and hence $|\nabla u|_2 \neq 0$) we deduce that
\[
\mu \gamma_q C_{N,q}^q a^{(1-\gamma_q) q} |\nabla u|_2^{\gamma_q q-2} + \cS^{-\frac{2^*}2} |\nabla u|_2^{2^*-2} \ge 1, \quad \forall u \in \cP.
\]
This implies that $\inf_{u \in \cP} |\nabla u|_2 >0$, and hence, by definition of $\cP$,
\[
\inf_{ u \in \cP}  \left( |u|_q^q + |u|_{2^*}^{2^*}\right)>0,
\]
which finally gives
\[
\inf_{u \in \cP} E(u) = \inf_{u \in \cP} \left[\frac{\mu}q\left(\frac{\gamma_q q}2-1\right) |u|_q^q + \frac1N  |u|_{2^*}^{2^*}\right] >0. \qedhere
\]
\end{proof}

\begin{lemma}\label{lem: stima sup A mu < 0}
There exists $k>0$ sufficiently small such that
\[
0< \sup_{\overline{A_k}} E < m(a,\mu) \quad \text{and} \quad u \in \overline{A_k} \implies E(u), P(u) >0,
\]
where $A_k:= \left\{u \in S: |\nabla u|_2^2 < k\right\}$. 
\end{lemma}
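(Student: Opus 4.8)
The plan is to reduce both $E$ and $P$, on the sphere $S$, to functions of the single variable $t=|\nabla u|_2$ by means of the Gagliardo--Nirenberg and Sobolev inequalities, exactly as in \eqref{E sub sup} and in the $L^2$-critical counterpart, and then to use that $q$ is $L^2$-supercritical (so $\gamma_q q>2$), which makes the quadratic term dominant near $t=0$ in both quantities.

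Concretely, for $u\in S$ the inequalities \eqref{GN ineq} and \eqref{Sob ineq} give $|u|_q^q\le C_{N,q}^q a^{(1-\gamma_q)q}|\nabla u|_2^{\gamma_q q}$ and $|u|_{2^*}^{2^*}\le \cS^{-2^*/2}|\nabla u|_2^{2^*}$, hence
\[
E(u)\ \ge\ \frac12|\nabla u|_2^2-\frac{\mu C_{N,q}^q}{q} a^{(1-\gamma_q)q}|\nabla u|_2^{\gamma_q q}-\frac{1}{2^*\cS^{2^*/2}}|\nabla u|_2^{2^*}=:f\bigl(|\nabla u|_2\bigr),
\]
\[
P(u)\ \ge\ |\nabla u|_2^2-\mu\gamma_q C_{N,q}^q a^{(1-\gamma_q)q}|\nabla u|_2^{\gamma_q q}-\cS^{-2^*/2}|\nabla u|_2^{2^*}=:g\bigl(|\nabla u|_2\bigr).
\]
Since $\gamma_q q>2$ and $2^*>2$, one has $f(t)/t^2\to\frac12$ and $g(t)/t^2\to1$ as $t\to0^+$, so there is $t_0>0$ with $f,g>0$ on $(0,t_0)$. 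Recalling that $|\nabla u|_2>0$ for every $u\in S$, if $k$ is small enough that $\overline{A_k}\subset\{\,|\nabla u|_2<t_0\,\}$, then every $u\in\overline{A_k}$ satisfies $E(u)\ge f(|\nabla u|_2)>0$ and $P(u)\ge g(|\nabla u|_2)>0$, which is the implication in the statement. For the two remaining inequalities: since $\mu>0$, the last two terms of $E$ are nonpositive, so $E\le\frac12|\nabla u|_2^2$ on $S$ and thus $\sup_{\overline{A_k}}E\to0$ as $k\to0$; shrinking $k$ further so that this supremum lies below $m(a,\mu)$ — which is legitimate because $m(a,\mu)>0$ by the preceding lemma — gives $\sup_{\overline{A_k}}E<m(a,\mu)$, while positivity of the supremum is clear since $A_k$ contains nonzero functions (e.g.\ $s\star v$ for a fixed $v\in S$ and $s$ sufficiently negative), on each of which $E>0$ by the bound just proved.

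I do not expect a genuine obstacle here: this is a purely "soft" argument, entirely parallel to the $L^2$-critical Lemma \ref{lem: E su P cr} and to the corresponding lemmas in \cite{So}, and — unlike the estimate $m(a,\mu)<\cS^{N/2}/N$ that compactness requires when $N\ge5$ — it uses neither assumption \eqref{hp} nor any concentration analysis. The only input from elsewhere is the strict positivity $m(a,\mu)>0$ from the previous lemma.
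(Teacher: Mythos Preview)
Your argument is correct and is essentially identical to the paper's own proof: both use Gagliardo--Nirenberg and Sobolev to bound $E(u)$ and $P(u)$ below by functions of $|\nabla u|_2$ whose quadratic leading term dominates (since $\gamma_q q>2$), then bound $E(u)\le\frac12|\nabla u|_2^2$ and invoke $m(a,\mu)>0$ from the preceding lemma. Your remark that \eqref{hp} is not needed here is also accurate.
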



\begin{proof}
By the Gagliardo-Nirenberg and the Sobolev inequalities
\[
\begin{split}
E(u) & \ge \frac12 |\nabla u|_2^2 -  \frac{1}{q} C_{N,q}^q \mu a^{(1-\gamma_q)q}|\nabla u|_2^{\gamma_q q} - \frac{1}{2^*} \cS^{-\frac{2^*}2}|\nabla u|_2^{2^*}  >0, \\
P(u) &\ge  |\nabla u|_2^2 - \gamma_q C_{N,q}^q \mu a^{(1-\gamma_q)q}|\nabla u|_2^{\gamma_q q} - \cS^{-\frac{2^*}2}|\nabla u|_2^{2^*} > 0,
\end{split}
\]
if $u \in \overline{A_k}$ with $k$ small enough, since $\gamma_q q>2$. If necessary replacing $k$ with a smaller quantity, we also have $E(u) \le |\nabla u|_2^2/2 < m(a,\mu)$ for every $u \in \overline{A_k}$.
\end{proof}

As in the previous section, the following estimate will play a crucial role in the proof of existence of a ground state. Let $m_r(a,\mu):= \inf_{\cP \cap S_r} E$, where $S_r=S \cap H^1_{\rad}$.

\begin{lemma}\label{lem: st above sub}
If $\mu a^{(1-\gamma_q)q} < \alpha(N,q)$ defined by \eqref{alpha sup}, then $\displaystyle m_r(a,\mu) < \displaystyle{\frac{\cS^\frac{N}2}N}$.
\end{lemma}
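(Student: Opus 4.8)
The plan is to use the same test-function strategy as in the $L^2$-critical case (Lemma \ref{lem: st above cr}), namely to estimate $\max_{s\in\R}\Psi^\mu_{v_\eps}(s)$ for the rescaled truncated bubbles $v_\eps = a\,u_\eps/|u_\eps|_2$ with $u_\eps = \varphi U_\eps$, and to show that this maximum is strictly below $\cS^{N/2}/N$ for suitable $\eps$. Since $\cP = \cP_-$ and each $\Psi^\mu_u$ has a unique maximum point $t_u$ with $t_u\star u\in\cP$ (Lemma \ref{lem: fiber sup}), we have $m_r(a,\mu)\le E(t_{v_\eps}\star v_\eps)=\sup_\R\Psi^\mu_{v_\eps}$ for every $\eps>0$, so an upper bound of the right type suffices.

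The key steps, in order: First, recall the $\mu=0$ computation from Lemma \ref{lem: st above cr} / Proposition \ref{prop: hom}: $\sup_\R\Psi^0_{v_\eps} = \frac1N\big(|\nabla u_\eps|_2^2/|u_\eps|_{2^*}^2\big)^{N/2} = \cS^{N/2}/N + O(\eps^{N-2})$, using the standard asymptotics of Lemma \ref{lem: app}. Second, estimate the maximum point $t_{\eps,\mu}$ of $\Psi^\mu_{v_\eps}$ from below: from $P_\mu(t_{\eps,\mu}\star v_\eps)=0$, i.e. $|\nabla v_\eps|_2^2 = \mu\gamma_q e^{(\gamma_q q-2)t_{\eps,\mu}}|v_\eps|_q^q + e^{(2^*-2)t_{\eps,\mu}}|v_\eps|_{2^*}^{2^*}$, and since $\gamma_q q>2$ here both exponential terms are increasing in $t$, so dropping the (nonnegative) $q$-term gives $e^{(2^*-2)t_{\eps,\mu}} \le |\nabla v_\eps|_2^2/|v_\eps|_{2^*}^{2^*}$, hence $t_{\eps,\mu}\le t_{\eps,0}$ and in fact $e^{2t_{\eps,\mu}}$ stays bounded; one also needs a lower bound $e^{2t_{\eps,\mu}}\ge c>0$ uniform in $\eps$, which follows since $|\nabla v_\eps|_2^2,|v_\eps|_{2^*}^{2^*},|v_\eps|_q^q$ are all bounded above and $|\nabla v_\eps|_2^2$ bounded below. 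Third, write $\sup_\R\Psi^\mu_{v_\eps} = \Psi^\mu_{v_\eps}(t_{\eps,\mu}) = \Psi^0_{v_\eps}(t_{\eps,\mu}) - \frac{\mu}{q}e^{\gamma_q q\, t_{\eps,\mu}}|v_\eps|_q^q \le \sup_\R\Psi^0_{v_\eps} - c\,\mu\,|v_\eps|_q^q = \cS^{N/2}/N + O(\eps^{N-2}) - C_{N,a,\mu}\,|u_\eps|_q^q/|u_\eps|_2^{(1-\gamma_q)q}$, with $C_{N,a,\mu}>0$ independent of $\eps$. Fourth, invoke Lemma \ref{lem: app} to compare $|u_\eps|_q^q/|u_\eps|_2^{(1-\gamma_q)q}$ with $\eps^{N-2}$: for $N=3,4$ the negative term dominates the $O(\eps^{N-2})$ error for all small $\eps$ (so $\alpha=+\infty$ is admissible), while for $N\ge5$ one has to be more careful about the relative orders, and it is here that the condition $\mu a^{(1-\gamma_q)q}<\alpha(N,q)$ with $\alpha=\cS^{\frac N4(1-\gamma_q)q}/\gamma_q$ must enter, presumably because for $N\ge5$ the natural test-function estimate only beats $\cS^{N/2}/N$ when the $q$-term is not already ``used up'' in bounding $t_{\eps,\mu}$ away from $-\infty$, forcing a quantitative smallness restriction.

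The main obstacle I expect is the dimension-dependent bookkeeping in the last two steps for $N\ge 5$: one must track the exact powers of $\eps$ in $|u_\eps|_q^q$, $|u_\eps|_2^2$, $|\nabla u_\eps|_2^2$, $|u_\eps|_{2^*}^{2^*}$ from Lemma \ref{lem: app}, check that the gain term $|u_\eps|_q^q/|u_\eps|_2^{(1-\gamma_q)q}$ has order $\eps^{\beta(N,q)}$ with $\beta(N,q)<N-2$, and, when it does not, supply the extra smallness of $\mu a^{(1-\gamma_q)q}$ to close the gap — or, alternatively, optimize over $\eps$ rather than just letting $\eps\to 0$. A secondary subtlety is justifying the uniform two-sided control $0<c\le e^{2t_{\eps,\mu}}\le C$, which is needed to turn the $e^{\gamma_q q\,t_{\eps,\mu}}|v_\eps|_q^q$ term into a genuine $\eps$-order gain; this uses $\gamma_q q>2$, the uniform bounds on the $L^2$-, $L^q$- and $L^{2^*}$-norms of $v_\eps$, and Lemma \ref{lem: fiber sup}. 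Once these estimates are assembled, the conclusion $\sup_\R\Psi^\mu_{v_\eps}<\cS^{N/2}/N$ for $\eps$ small follows, and with it $m_r(a,\mu)<\cS^{N/2}/N$.
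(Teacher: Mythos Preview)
Your overall strategy is the same as the paper's, and your final displayed estimate
\[
\sup_\R \Psi^\mu_{v_\eps} \le \frac{\cS^{N/2}}{N} + O(\eps^{N-2}) - C_{N,a,\mu}\,\frac{|u_\eps|_q^q}{|u_\eps|_2^{(1-\gamma_q)q}}
\]
is exactly what is needed. But the route you take to it has a genuine error, and you have also misidentified where the smallness condition enters.

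The error is in Step 2. You claim that $|\nabla v_\eps|_2^2$, $|v_\eps|_{2^*}^{2^*}$, $|v_\eps|_q^q$ are all bounded above as $\eps\to 0$, and deduce a uniform lower bound $e^{2t_{\eps,\mu}}\ge c>0$. This is false: since $v_\eps = a\,u_\eps/|u_\eps|_2$ and $|u_\eps|_2\to 0$, all three quantities diverge. In fact the upper bound $e^{(2^*-2)t_{\eps,\mu}}\le |\nabla v_\eps|_2^2/|v_\eps|_{2^*}^{2^*}$ rewrites as $e^{(2^*-2)t_{\eps,\mu}}\le C\,|u_\eps|_2^{2^*-2}\to 0$, so $t_{\eps,\mu}\to -\infty$. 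Consequently your intermediate inequality $e^{\gamma_q q\,t_{\eps,\mu}}|v_\eps|_q^q\ge c\,|v_\eps|_q^q$ is unjustified, and indeed $c\,|v_\eps|_q^q$ and $C_{N,a,\mu}\,|u_\eps|_q^q/|u_\eps|_2^{(1-\gamma_q)q}$ differ by a factor $|u_\eps|_2^{-\gamma_q q}\to\infty$, so the two lines of your Step 3 do not match.

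What is actually needed is the precise lower bound
\[
e^{(2^*-2)t_{\eps,\mu}} \ge \frac{C}{a^{2^*-2}}\,|u_\eps|_2^{2^*-2},
\]
obtained by feeding the \emph{upper} bound on $t_{\eps,\mu}$ back into the equation $P_\mu(t_{\eps,\mu}\star v_\eps)=0$ (using $\gamma_q q>2$). This yields $e^{\gamma_q q\,t_{\eps,\mu}}|v_\eps|_q^q \ge C\,a^{(1-\gamma_q)q}\,|u_\eps|_q^q/|u_\eps|_2^{(1-\gamma_q)q}$, from which your final formula follows legitimately. Moreover, it is \emph{here}, in securing this lower bound for $N\ge 5$, that the hypothesis $\mu a^{(1-\gamma_q)q}<\cS^{\frac N4(1-\gamma_q)q}/\gamma_q$ is used: the bracket one has to show positive is essentially $(|\nabla u_\eps|_2^2)^{\frac{2^*-\gamma_q q}{2^*-2}} - \gamma_q\,\mu a^{(1-\gamma_q)q}\,(|u_\eps|_{2^*}^2)^{\frac{2^*-\gamma_q q}{2^*-2}}$, which for $N\ge 5$ does not vanish as $\eps\to 0$ and is positive exactly under \eqref{alpha sup}. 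For $N=3,4$ the competing term carries a favourable power of $\eps$ (respectively $|\log\eps|$) and no restriction on $\mu,a$ is needed. The condition does \emph{not} enter in the final comparison of $|u_\eps|_q^q/|u_\eps|_2^{(1-\gamma_q)q}$ with $O(\eps^{N-2})$: for $N\ge 5$ that ratio is bounded below by a positive constant, which beats $O(\eps^{N-2})$ outright.
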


\begin{proof}
The structure of the proof is similar to the one of Lemma \ref{lem: st above cr}, but we have to face some complications, since therein we took advantage of the fact that $\gamma_{\bar p} \bar p=2$ in order to make direct computations in several steps. We define $u_\eps \in C^\infty_c(\R^N)$ and $v_\eps \in S_r$ as in Lemma \ref{lem: st above cr} and, observing that 
\[
m_r(a,\mu) = \inf_{\cP \cap S_r} E_\mu \le E_\mu(t_{v_\eps,\mu} \star v_\eps) = \max_{s \in \R} E_\mu( s \star v_\eps),
\]
we focus on an upper estimate of $\max_{s \in \R} E_\mu( s \star v_\eps) = \max_{s \in \R} \Psi_{v_\eps}^\mu(s)$. It is convenient to recall that
$\Psi_{v_\eps}^0$ has a unique critical point $t_{\eps,0}$, which is a strict maximum point, given by \eqref{def t_0}; the maximum level is estimated in \eqref{stima 0} as
\[
\Psi_{v_\eps}^0(t_{\eps,0}) = \frac{\cS^\frac{N}2}{N} + O(\eps^{N-2}).
\]

\noindent \textbf{Step 1)} \emph{Estimate on $t_{\eps,\mu}$}. We denote by $t_{\eps,\mu}:= t_{v_\eps,\mu}$, the unique maximum point of $\Psi_{v_\eps}^\mu$ (see Lemma \ref{lem: fiber sup}). By definition $P_\mu(t_{\eps,\mu} \star v_{\eps}) = 0$, and hence
\begin{equation}
\begin{split}
|v_\eps|_{2^*}^{2^*} e^{2^* t_{\eps,\mu}} = |\nabla v_\eps|_2^2 e^{2 t_{\eps,\mu}} - \mu \gamma_{q} |v_\eps|_{q}^{q} e^{\gamma_q q t_{\eps,\mu}}  \le |\nabla v_\eps|_2^2 e^{2 t_{\eps,\mu}}, 
\end{split}
\end{equation}
whence it follows that 
\begin{equation}\label{t eps < t 0}
e^{t_{\eps,\mu}} \le \left(\frac{|\nabla v_\eps|_2^2}{|v_\eps|_{2^*}^{2^*}}\right)^\frac{1}{2^*-2}.
\end{equation}
By \eqref{t eps < t 0}, and using the fact that $\gamma_q q >2$, 
\begin{equation}\label{t eps > 1}
\begin{split}
e^{(2^*-2) t_{\eps,\mu}} &= \frac{|\nabla v_\eps|_2^2}{|v_\eps|_{2^*}^{2^*}} - \mu \gamma_q \frac{|v_\eps|_{q}^{q}}{|v_\eps|_{2^*}^{2^*}} e^{(\gamma_q q-2) t_{\eps,\mu}} \ge \frac{|\nabla v_\eps|_2^2}{|v_\eps|_{2^*}^{2^*}} - \mu \gamma_q \frac{|v_\eps|_{q}^{q}}{|v_\eps|_{2^*}^{2^*}} \left(\frac{|\nabla v_\eps|_2^2}{|v_\eps|_{2^*}^{2^*}}\right)^{\frac{\gamma_q q-2}{2^*-2}}\\
& = \frac{|u_\eps|_2^{2^*-2}}{a^{2^*-2}} \frac{|\nabla u_\eps|_2^2}{|u_\eps|_{2^*}^{2^*}} -\mu \gamma_q  \frac{|u_\eps|_2^{2^*-q}}{a^{2^*-q}} \frac{|u_\eps|_q^q}{|u_\eps|_{2^*}^{2^*}} \left(  \frac{|u_\eps|_2^{2^*-2}}{a^{2^*-2}} \frac{|\nabla u_\eps|_2^2}{|u_\eps|_{2^*}^{2^*}} \right)^\frac{\gamma_q q -2}{2^* -2}     \\
& = \frac{|u_\eps|_2^{2^*-2}}{a^{2^*-2}} \frac{\left(|\nabla u_\eps|_2^2\right)^\frac{\gamma_q q -2}{2^*-2}}{|u_\eps|_{2^*}^{2^*}} \left[ \left(|\nabla u_\eps|_2^2\right)^{\frac{2^*-\gamma_q q}{2^*-2}} -  \frac{\gamma_q \mu a^{(1-\gamma_q)q} |u_\eps|_q^q}{\left(|u_\eps|_{2^*}^{2^*}\right)^\frac{\gamma_q q-2}{2^*-2} |u_\eps|_2^{(1-\gamma_q)q}}\right].
\end{split}
\end{equation}
Using the asymptotic estimates in Lemma \ref{lem: app}, we deduce that there exist $C_1, C_2, C_3>0$ (depending on $N$ and $q$ but independent of $\eps<1$, $a$ and $\mu$) such that
\begin{equation}\label{19113}
\begin{split}
\left(|\nabla u_\eps|_2^2\right)^{\frac{2^*-\gamma_q q}{2^*-2}} \ge C_1,  \quad \frac{1}{C_2} \ge  \left(|u_\eps|_{2^*}^{2^*}\right)^\frac{\gamma_q q-2}{2^*-2} \ge C_2,
\end{split}
\end{equation}
and 
\begin{equation}\label{19114}
\frac{|u_\eps|_q^q}{|u_\eps|_2^{(1-\gamma_q)q}} \le  \begin{cases}  C_3 \eps^{N-\frac{N-2}2 q - (1-\gamma_q) q} = C_3 & \text{if $N \ge 5$} \\
C_3 \eps^{N-\frac{N-2}2 q - (1-\gamma_q) q} |\log \eps|^{\frac{(\gamma_q -1)q}2} = C_3 |\log \eps|^{\frac{(\gamma_q -1)q}2}  & \text{if $N=4$}  \\
C_3 \eps^{3-\frac{q}2 - (1-\gamma_q)\frac{q}2} = C_3 \eps^{\frac{6-q}4} & \text{if $N=3$}
\end{cases} 
\end{equation}
(in case $N=3$, we used the fact that $6>q>\bar p = 10/3>3$). Let $N=3,4$. Coming back to \eqref{t eps > 1}, estimates \eqref{19113} and \eqref{19114} implies that for some $o_\eps(1) \to 0$ as $\eps \to 0$ it results that
\[
e^{(2^*-2) t_{\eps,\mu}} \ge \frac{C |u_\eps|_2^{2^*-2}}{a^{2^*-2}} \left[ C_1 - \gamma_q \mu a^{(1-\gamma_q)q} \frac{C_3}{C_2} o_\eps(1)\right] \ge \frac{C}{a^{2^*-2}} |u_\eps|_2^{2^*-2},
\]
provided that $\eps \in (0, \eps_0)$ with $\eps_0$ sufficiently small. Therefore, if $N=3,4$ we proved that for every $a,\mu>0$
\begin{equation}\label{stima t eps sup}
e^{(2^*-2) t_{\eps,\mu}}  \ge \frac{C}{a^{2^*-2}} |u_\eps|_2^{2^*-2}
\end{equation}
for a positive constant $C = C(N,q,\mu,a)>0$, for every $\eps \in (0,\eps_0)$ with $\eps_0>0$ small.

If $N \ge 5$, in view of \eqref{19114} the previous argument only shows that \eqref{stima t eps sup} holds if $\gamma_q \mu a^{(1-\gamma_q) q} < C_1 C_2/C_3$. In order to obtain a more precise quantification, however, it is convenient to come back to \eqref{t eps > 1} and observe that\begin{equation}
\begin{split}
\frac{|u_\eps|_q^q}{\left(|u_\eps|_{2^*}^{2^*}\right)^\frac{\gamma_q q-2}{2^*-2} |u_\eps|_2^{(1-\gamma_q)q}} \le \frac{ |u_\eps|_{2^*}^{2^* \frac{q-2}{2^*-2}} |u_\eps|_2^{2\frac{2^*-q}{2^*-2}}}{|u_\eps|_{2^*}^{2^*\frac{\gamma_q q-2}{2^*-2}} |u_\eps|_2^{(1-\gamma_q)q}} =  |u_\eps|_{2^*}^{2^*\frac{(1-\gamma_q)q}{2^*-2}} =   (|u_\eps|_{2^*}^2)^\frac{2^*-\gamma_q q}{2^*-2}.
\end{split}
\end{equation}
Therefore, \eqref{t eps > 1} gives
\[
e^{(2^*-2) t_{\eps,\mu}} \ge \frac{|u_\eps|_2^{2^*-2}}{a^{2^*-2}} \frac{\left(|\nabla u_\eps|_2^2\right)^\frac{\gamma_q q -2}{2^*-2}}{|u_\eps|_{2^*}^{2^*}} \left[ \left(|\nabla u_\eps|_2^2\right)^{\frac{2^*-\gamma_q q}{2^*-2}} - \gamma_q  \mu a^{(1-\gamma_q)q}  (|u_\eps|_{2^*}^2)^\frac{2^*-\gamma_q q}{2^*-2} \right].
\]
The right hand side is positive provided that
\[
\gamma_q \mu a^{(1-\gamma_q)q} < \left(\frac{|\nabla u_\eps|_2^2}{|u_\eps|_{2^*}^2}\right)^{\frac{2^*-\gamma_q q}{2^*-2}} = \left(\frac{|\nabla u_\eps|_2^2}{|u_\eps|_{2^*}^2}\right)^{\frac{N}{4} (1-\gamma_q)q}= \cS^{\frac{N}{4} (1-\gamma_q)q} + O(\eps^{N-2}),
\]
where the last estimate follows from the definition of $u_\eps$ as in \eqref{stima 0}. Therefore, if \eqref{hp} holds with $\alpha(N,q)$ defined by \eqref{alpha sup}, using again Lemma \ref{lem: app} we conclude that
\begin{equation}\label{stima t eps sup 2}
e^{(2^*-2) t_{\eps,\mu}}  \ge \frac{C}{a^{2^*-2}} |u_\eps|_2^{2^*-2}
\end{equation}
for a positive constant $C = C(N,q,\mu,a)>0$, for every $\eps \in (0,\eps_0)$ with $\eps_0>0$ small.

\medskip

\noindent \textbf{Step 2)} \emph{Estimate on $\sup_\R \Psi^\mu_{v_\eps}$}. By steps 1 and 2  
\begin{equation}\label{stima finale 2}
\begin{split}
\sup_{\R} \Psi^\mu_{v_\eps} &=  \Psi^\mu_{v_\eps}(t_{\eps,\mu}) = \Psi^0_{v_\eps}(t_{\eps,\mu}) - \frac{\mu}{{q}} e^{\gamma_q q t_{\eps,\mu}} |v_\eps|_q^q \\
& \le \sup_\R \Psi^0_{v_\eps} - \frac{ \mu C}{qa^{\gamma_q q} } |u_\eps|_2^{\gamma_q q} \cdot \frac{a^q |u_\eps|_q^q}{|u_\eps|_2^q}  \\
& =    \frac{1}{N} \cS^\frac{N}2 + O(\eps^{N-2}) - C \mu a^{(1-\gamma_q)q}  \frac{|u_\eps|_{q}^{q}}{|u_\eps|_2^{(1-\gamma_q)q}},
\end{split}
\end{equation}
where $C>0$ is independent of $\eps$. Similarly as in \eqref{19114}, we have that
\begin{equation}\label{191142}
\frac{|u_\eps|_q^q}{|u_\eps|_2^{(1-\gamma_q)q}} \ge  \begin{cases}  C_4 \eps^{N-\frac{N-2}2 q - (1-\gamma_q) q} = C_4 & \text{if $N \ge 5$} \\
C_4 \eps^{N-\frac{N-2}2 q - (1-\gamma_q) q} |\log \eps|^{\frac{(\gamma_q -1)q}2} = C_4 |\log \eps|^{\frac{(\gamma_q -1)q}2}  & \text{if $N=4$}  \\
C_4 \eps^{3-\frac{q}2 - (1-\gamma_q)\frac{q}2} = C_4 \eps^{\frac{6-q}4} & \text{if $N=3$}
\end{cases} 
\end{equation}
for a constant $C_4>0$, and hence we finally infer that $\sup_{\R} \Psi^\mu_{v_\eps} < \cS^\frac{N}2/N$ for any $\eps>0$ small enough, which is the desired result. 
\end{proof}

%
%

\begin{proof}[Proof of Theorem \ref{thm: main} for $2+4/N<q<2^*$]
We consider the augmented functional $\tilde E$ defined as in \eqref{def aug}, and the minimax class $\gamma$ defined as in \eqref{def gamma}. Proceeding exactly as in the case $q=\bar p$, we obtain a Palais-Smale sequence $\{u_n\} \subset S_r$ for $E|_{S}$ at level $m_r(a,\mu) \in (0,\cS^\frac{N}2/N)$, with the property that $P(u_n) \to 0$, and $u_n^- \to 0$ a.e. in $\R^N$. Then
%
one of the two alternatives in Proposition \ref{prop: PS conv} occurs.
Let us assume that alternative ($i$) takes place: there exists $\tilde u \in H^1$ such that $u_n \weak \tilde u$ weakly in $H^1$, but not strongly; $\tilde u \not \equiv 0$ solves \eqref{stat com} for some $\lambda<0$, and
\begin{equation}\label{E weak lim sup}
E(\tilde u) \le m_r(a,\mu) - \frac{\cS^\frac{N}2}N < 0.
\end{equation}
However, since $P(\tilde u) = 0$ by the Pohozaev identity and $\gamma_q q>2$, we also have
\[
E(\tilde u) = \frac{\mu}q\left(\frac{\gamma_q q}2-1\right)|\tilde u|_q^q + \frac{1}{N}|\tilde u|_{2^*}^{2^*} > 0,
\]
a contradiction. This shows that necessarily alternative ($ii$) in Proposition \ref{prop: PS conv} holds, namely $u_n \to \tilde u$ strongly in $H^1$, and $\tilde u$ is a real-valued radial normalized solution to \eqref{stat com}-\eqref{norm} for some $\tilde \lambda<0$, with energy $m_r(a,\mu)$. By convergence, $\tilde u$ is also non-negative, and it remains to prove that $\tilde u$ is a ground state. It is not difficult sufficient to check, as in the $L^2$-critical case, that $ \inf_{\cP \cap S_r} E = \inf_{\cP} E = m(a,\mu)$. The thesis follows.
\end{proof}

\begin{remark}\label{rem: on alpha}
As already mentioned in the introduction, assumption \eqref{hp} enters in the upper estimate of the ground state level $m(a,\mu)$, see Lemma \ref{lem: st above sub}, and in particular Step 2. We emphasize that, in \eqref{19114}, we could take advantage of the different integrability properties of $u_\eps$ in order to have $\alpha=+\infty$ for $N=3,4$.\end{remark}

\section{Non-existence in the defocusing case $\mu<0$}\label{sec: non ex}

\begin{proof}[Proof of Theorem \ref{thm: non-ex}] \textbf{1)} Let $u$ be a constrained critical point of $E_\mu$ on $S_a$. Then $u$ solves \eqref{stat com} for some $\lambda \in \R$; testing \eqref{stat com} by $\bar u$, the complex conjugate of $u$, and taking the real part, we deduce that
\[
|\nabla u|_2^2 = \lambda |u|_2^2 + \mu |u|_q^q + |u|_{2^*}^{2^*}.
\]
We also know that $P_\mu(u) = 0$, by the Pohozaev identity. Combining these identities, we infer that 
\[
\lambda |u|_2^2 = \mu (\gamma_q -1) |u|_q^q > 0,
\]
since $\mu<0$, $\gamma_q <1$, and $S_a \ni u \not \equiv 0$. Therefore $\lambda>0$. 

Using again the fact that $P_\mu(u) = 0$, by the Sobolev inequality we also deduce that
\[
|\nabla u|_2^2 \le |u|_{2^*}^{2^*} \le \cS^{-\frac{2^*}{2}} |\nabla u|_2^{2^*} \quad \implies \quad |\nabla u|_2^2 \ge \cS^\frac{N}2,
\]
whence, since $\mu<0$, 
\[
E_\mu(u) = \frac{1}{N} |\nabla u|_2^2  - \frac{\mu}q\left(1-\frac{\gamma_q q}{2^*}\right) |u|_q^q > \frac{\cS^\frac{N}2}{N},
\]
which completes the proof of point (1).

\medskip

\noindent \textbf{2 and 3)} Let $u$ be a solution to \eqref{non ex} for some $\lambda\in \R$, $\mu<0$. A Brezis-Kato argument \cite{BrKa} implies that $u$ is smooth, and is in $L^\infty(\R^N)$\footnote{Since we did not find a precise reference for the global boundedness of $u$, we included a proof in Appendix \ref{sec: BrKa}.}; thus, $|\Delta u| \in L^\infty(\R^N)$ as well, and standard gradient estimates for the Poisson equation (see formula (3.15) in \cite{GiTr}) imply that $|\nabla u| \in L^\infty(\R^N)$. This and the fact that $u \in L^2(\R^N)$ imply that $u(x) \to 0$ as $|x| \to \infty$. As a consequence, using the fact that $\lambda>0$ by point 1, we have that
\begin{equation}\label{u sup}
-\Delta u = (\lambda + \mu u^{q-2} + u^{2^*-2}) u \ge \frac{\lambda}{2} u >0 \quad \text{for $|x| > R_0$, with $R_0>0$ large enough};
\end{equation}
that is, $u$ is superharmonic at infinity. 
By the Hadamard three spheres theorem \cite[Chapter 2]{ProWei}, this implies that the function $m(r):= \min_{|x|=r} u(x)$ satisfies
\[
m(r) \ge \frac{m(r_1)(r^{2-N}-r_2^{2-N}) + m(r_2) ( r_1^{2-N}- r^{2-N})      }{r_1^{2-N}- r_2^{2-N}} \qquad \forall R_0 < r_1 < r < r_2.
\]
Since $u$ decays at infinity, we have that $m(r_2) \to 0$ as $r_2 \to +\infty$, and hence taking the limit we infer that $r \mapsto r^{N-2} m(r)$ is monotone non-decreasing for $r>R_0$. Notice also that $m(r) >0$ for every $r>0$, since $u>0$ in $\R^N$. Therefore,
\[
m(r) \ge \underbrace{m(R_0) R_0^{N-2}}_{> 0} r^{2-N} \qquad \forall r>R_0,
\]
and this finally implies that
\begin{equation}\label{stima non ex}
\begin{split}
|u|_{p}^p & 
 \ge C \int_{R_0}^{+\infty} |m(r)|^p r^{N-1} dr  \ge C \int_{R_0}^{+\infty} r^{p(2-N) + N-1} dr,
\end{split}
\end{equation}
with $C>0$. If $N=3,4$, since $u \in H^1(\R^N)$ we have that $u \in L^{\frac{N}{N-2}}(\R^N)$. This choice of $p$ in \eqref{stima non ex} yields
\[
|u|_{\frac{N}{N-2}}^\frac{N}{N-2} \ge C \int_{R_0}^{+\infty} \frac{dr}{r} = +\infty,
\]
a contradiction. If instead $N \ge 5$, the fact that $u \in H^1(\R^N)$ does not imply that $u \in L^{\frac{N}{N-2}}(\R^N)$ or that $u \in L^p(\R^N)$ for some $p \in (0, N/(N-2)]$. But, imposing such condition as an assumption, we still reach a contradiction.
\end{proof}

\begin{remark}
The use of the three sphere theorem in the proof of non-existence results under suitable integrability condition is already present in the literature. We refer for instance to \cite[Lemma A.2]{Iko}.
\end{remark}

\section{Behavior of ground states when $\mu \to 0^+$}\label{sec: mu to 0}

In this section we prove Theorem \ref{thm: mu to 0}.

\begin{proof}[Proof of Theorem \ref{thm: mu to 0} for $2<q<2+4/N$]
We recall that $\tilde u_\mu$ is characterized as an interior local minimizer of $E_\mu$ on $\{u \in S_a: |\nabla u|_2<R_0\}$, where $R_0(a,\mu)$ is defined by Lemma \ref{lem: struct h}. Exactly as in \cite{So}, it is possible to check that $R_0=R_0(a,\mu) \to 0$ as $\mu \to 0^+$, then $|\nabla \tilde u_\mu|_2 < R_0(a,\mu) \to 0$ as well. Moreover, by the Gagliardo-Nirenberg and the Sobolev inequalities
\[
0 > m(a,\mu) = E_\mu(\tilde u_\mu) \ge \frac12 |\nabla \tilde u_\mu|_2 - \frac{\mu}{q} a^{(1-\gamma_q)q} C_{N,q}^q |\nabla \tilde u_\mu|_2^{\gamma_q q} - \frac{1}{2^* \cS^\frac{2^*}{2}} |\nabla \tilde u_\mu|_2^{2^*} \to 0
\]
as $\mu \to 0^+$.
\end{proof}

Let us consider now the more complicated case $2+4/N \le q < 2^*$. 
\begin{lemma}\label{inf sup}
Let $a>0$, and let $\mu \ge 0$ be such that \eqref{hp} is satisfied. Then
\[
\inf_{u \in \cP_{a,\mu}} E_\mu(u) = \inf_{u \in S_a} \max_{s \in \R} E_\mu(s \star u).
\]
\end{lemma}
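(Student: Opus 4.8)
The statement to prove is
\[
\inf_{u \in \cP_{a,\mu}} E_\mu(u) = \inf_{u \in S_a} \max_{s \in \R} E_\mu(s \star u),
\]
for $a>0$, $\mu \ge 0$ satisfying \eqref{hp}, in the case $2+4/N \le q < 2^*$. The key structural fact, already available from Lemma \ref{lem: fiber cr} (for $q=\bar p$) and Lemma \ref{lem: fiber sup} (for $\bar p < q < 2^*$), is that for every $u \in S_a$ the fiber map $\Psi_u^\mu$ has a \emph{unique} critical point $t_u \in \R$, which is a strict global maximum, and that $t_u \star u \in \cP_{a,\mu}$; moreover $\cP_{a,\mu} = \cP_-^{a,\mu}$. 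Recall also Proposition \ref{prop: psi P}: $s$ is a critical point of $\Psi_u^\mu$ iff $s \star u \in \cP_{a,\mu}$, and $(\Psi_u^\mu)'(s) = P_\mu(s \star u)$. With these in hand the proof is essentially a bookkeeping argument, which I would organize as the two inequalities.

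\textbf{The inequality $\ge$.} Take any $u \in S_a$. By the uniqueness result, $\max_{s \in \R} E_\mu(s \star u) = \Psi_u^\mu(t_u) = E_\mu(t_u \star u)$, and $t_u \star u \in \cP_{a,\mu}$. Hence
\[
\max_{s \in \R} E_\mu(s \star u) = E_\mu(t_u \star u) \ge \inf_{v \in \cP_{a,\mu}} E_\mu(v).
\]
Taking the infimum over $u \in S_a$ on the left gives $\inf_{u \in S_a}\max_{s}E_\mu(s\star u) \ge \inf_{\cP_{a,\mu}}E_\mu$.

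\textbf{The inequality $\le$.} Take any $v \in \cP_{a,\mu}$. Since $v \in S_a$ and $v \in \cP_{a,\mu}$, Proposition \ref{prop: psi P} gives $(\Psi_v^\mu)'(0) = P_\mu(v) = 0$, so $0$ is the (unique) critical point of $\Psi_v^\mu$ and is its global maximum; thus $\max_{s \in \R} E_\mu(s \star v) = \Psi_v^\mu(0) = E_\mu(v)$. Therefore
\[
\inf_{u \in S_a} \max_{s \in \R} E_\mu(s \star u) \le \max_{s \in \R} E_\mu(s \star v) = E_\mu(v),
\]
and taking the infimum over $v \in \cP_{a,\mu}$ yields $\inf_{u \in S_a}\max_{s}E_\mu(s\star u) \le \inf_{\cP_{a,\mu}}E_\mu$. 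Combining the two inequalities gives the claim. In the degenerate case $\mu = 0$ the same argument applies verbatim using the explicit unique maximum point \eqref{def t_0 u} of $\Psi_u^0$ (cf.\ the proof of Proposition \ref{prop: hom}).

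\textbf{Main obstacle.} There is no real obstacle here beyond citing the correct lemma for the uniqueness-of-critical-point property of $\Psi_u^\mu$: the whole content is that $\Psi_u^\mu$ is "unimodal" on the constraint, which is exactly what Lemmas \ref{lem: fiber cr} and \ref{lem: fiber sup} establish. The only mild care needed is that Lemma \ref{inf sup} is stated for all $\mu \ge 0$, so one should note that the $\mu=0$ case is covered by the explicit computation in Section \ref{sec: pre} (where $\Psi_u^0$ likewise has a unique strict maximum), rather than by Lemmas \ref{lem: fiber cr}--\ref{lem: fiber sup}, which assume $\mu>0$; with that remark the proof is complete in one short paragraph per inequality.
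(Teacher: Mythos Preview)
Your proof is correct and matches the paper's own argument essentially step for step: both establish the two inequalities by invoking the uniqueness of the critical point $t_u$ of $\Psi_u^\mu$ from Lemmas \ref{lem: fiber cr} and \ref{lem: fiber sup} (and the explicit computation in the proof of Proposition \ref{prop: hom} for $\mu=0$). There is nothing to add or correct.
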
 
\begin{proof}
Since $2+4/N \le q<2^*$ and $\mu \ge 0$, we know from Lemmas \ref{lem: fiber cr} and \ref{lem: fiber sup} that $\cP_{a,\mu} = \cP_-^{a,\mu}$, that for every $u \in S_a$ there exists a unique $t_{u,\mu} \in \R$ such that $t_{u,\mu} \star u \in \cP_{a,\mu}$, and that $t_{u,\mu}$ is a strict maximum point for $\Psi_u^\mu$ (for the case $\mu=0$, see the proof of Proposition \ref{prop: hom}). Thus, if $u \in \cP_{a,\mu}$, we have that $t_{u,\mu}=0$, and
\[
E_\mu(u) =  \max_{s \in \R} E_\mu(s \star u) \ge \inf_{v \in S_a} \max_{s \in \R} E_\mu(s \star v).
\]
On the other hand, if $u \in S_a$, then $t_{u,\mu} \star u \in \cP_{a,\mu}$, and hence
\[
\max_{s \in \R} E_\mu(s \star u) = E_\mu(t_{u,\mu} \star u) \ge \inf_{v \in \cP_{a,\mu}} E_\mu(v). \qedhere
\]
\end{proof}

\begin{lemma}\label{mon m}
Let $a>0$, and let $\tilde \mu>0$ satisfy \eqref{hp} for this choice of $a$. Then the function $\mu \in [0, \tilde \mu] \mapsto m(a,\mu) \in \R$ is monotone non-increasing.
\end{lemma}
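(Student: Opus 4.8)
The plan is to exploit the minimax characterization of $m(a,\mu)$ provided by Lemma \ref{inf sup}, combined with the trivial fact that $\mu \mapsto E_\mu(v)$ is non-increasing for every fixed $v$ (since the perturbation term $-\frac{\mu}{q}\int|v|^q$ is focusing and $\int|v|^q\ge 0$).

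First I would record that \eqref{hp} is preserved on the whole interval: because $\mu \mapsto \mu a^{(1-\gamma_q)q}$ is increasing and $\tilde\mu$ satisfies \eqref{hp}, every $\mu\in[0,\tilde\mu]$ satisfies \eqref{hp} as well. Hence, in the regime $2+4/N\le q<2^*$ under consideration, Lemma \ref{inf sup} applies to each such $\mu$, so that
\[
m(a,\mu) \;=\; \inf_{u\in S_a}\ \max_{s\in\R} E_\mu(s\star u)\qquad\text{for all }\mu\in[0,\tilde\mu].
\]
For the endpoint $\mu=0$ this same identity is contained in Proposition \ref{prop: hom} (which also gives $m(a,0)=\cS^{N/2}/N$), so no separate treatment of $\mu=0$ via Lemmas \ref{lem: fiber cr}--\ref{lem: fiber sup} is needed.

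Next, fix $0\le\mu_1<\mu_2\le\tilde\mu$. For every $v\in S_a$ one has
\[
E_{\mu_2}(v) \;=\; E_{\mu_1}(v) - \frac{\mu_2-\mu_1}{q}\int_{\R^N}|v|^q \;\le\; E_{\mu_1}(v),
\]
because $\mu_2-\mu_1>0$. Taking $v=s\star u$ for $u\in S_a$, $s\in\R$, then passing to the maximum over $s$ and the infimum over $u$, the displayed minimax formula yields $m(a,\mu_2)\le m(a,\mu_1)$, which is precisely the claimed monotonicity.

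There is no real obstacle here; the only point worth a line of care is the uniform validity of the minimax representation over all of $[0,\tilde\mu]$, which is why it is important that \eqref{hp} is inherited by smaller values of $\mu$, and why the degenerate case $\mu=0$ is invoked through Proposition \ref{prop: hom}.
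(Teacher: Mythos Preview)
Your argument is correct and follows essentially the same route as the paper: both use the minimax representation of Lemma~\ref{inf sup} together with the pointwise inequality $E_{\mu_2}(v)\le E_{\mu_1}(v)$ for $\mu_1\le\mu_2$, the only cosmetic difference being that the paper evaluates at the explicit maximizer $t_{u,\mu_2}$ before comparing, whereas you compare pointwise in $s$ and then pass to the maximum. Your explicit remarks on the inheritance of \eqref{hp} by smaller $\mu$ and on the $\mu=0$ endpoint via Proposition~\ref{prop: hom} are welcome clarifications.
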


\begin{proof}
Let $0\le\mu_1\le \mu_2\le \tilde \mu$. By Lemma \ref{inf sup} we have that  
\[
\begin{split}
m(a,\mu_2) &= \inf_{u \in S_a} \max_{s \in \R} E_{\mu_2}(s \star u)  =\inf_{u \in S_a} E_{\mu_2}(t_{u,\mu_2} \star u) \\
& = \inf_{u \in S_a} \left[ E_{\mu_1}(t_{u,\mu_2} \star u) + \frac{\mu_1-\mu_2}{q}e^{\gamma_q q t_{u,\mu_2}} |u|_q^q \right] \le \inf_{u \in S_a} \max _{s \in \R} E_{\mu_1}(s \star u)  = m(a,\mu_1). \qedhere
\end{split}
\]
\end{proof}

\begin{proof}[Proof of Theorem \ref{thm: mu to 0} for $2+4/N \le q<2^*$]
Let $a>0$, and let $\tilde \mu>0$ satisfy \eqref{hp} for this choice of $a$. At first, we show that the family of positive radial ground states $\{\tilde u_\mu: \ 0<\mu< \tilde \mu\}$ is bounded in $H^1$. If $q= \bar p$, then by Lemma \ref{mon m}, and using the fact that $P_\mu(\tilde u_\mu)=0$, we have that
\[
m(a,0) \ge m(a,\mu) = E_\mu(\tilde u_\mu) = \frac1N|\nabla \tilde u_\mu|_2^2 - \frac{2\mu}{N \bar p} |\tilde u_\mu|_{\bar p}^{\bar p} \ge \frac1N\left(1-\frac{2\tilde \mu}{\bar p} C_{N,\bar p}^{\bar p} a^{\frac4N} \right) |\nabla \tilde u_\mu|_2^2.
\]
If $\bar p<q<2^*$, in a similar way we note that 
\[
m(a,0) \ge m(a,\mu) = E_\mu(\tilde u_\mu) = \frac1N| \tilde u_\mu|_{2^*}^{2^*} + \frac{\mu}{q}\left(\frac{\gamma_q q}{2}-1\right)|\tilde u_\mu|_q^q;
\]
since $\gamma_q q>2$, we deduce that $\{\tilde u_\mu\}$ is bounded in $L^q \cap L^{2^*}$, and hence, since $P_\mu(\tilde u_\mu) = 0$, it is also bounded in $H^1$. Since in particular $\{\tilde u_\mu\}$ is bounded in $L^q$, the associated sequence $\{\tilde \lambda_\mu\}$ of negative Lagrange multipliers tends to $0$:
\[
\tilde \lambda_\mu a^2 = |\nabla \tilde u_\mu|_2^2 - \mu |\tilde u_\mu|_q^q - |\tilde u_\mu|_{2^*}^{2^*} = \mu(\gamma_q-1)|\tilde u_\mu|_q^q \to 0 \quad \text{as $\mu \to 0^+$}.
\]
Therefore, we deduce that up to a subsequence $\tilde u_\mu \weak \tilde u$ weakly in $H^1$, in $\cD^{1,2}$, and in $L^{2^*}$; $\tilde u_\mu \to \tilde u$ strongly in $L^q$ and a.e. in $\R^N$; $|\nabla \tilde u_\mu|_2^2 \to \ell \ge 0$; $\tilde \lambda_\mu \to 0$. 

We claim that $\ell \neq 0$. Indeed, if $\ell = 0$, then $\tilde u_\mu \to 0$ strongly in $\cD^{1,2}(\R^N)$, and hence $E_\mu(\tilde u_\mu) \to 0$. However, by Lemma \ref{mon m} we know that $E_\mu(\tilde u_\mu) \ge m(a,\tilde \mu)>0$ for every $0<\mu<\tilde \mu$, a contradiction. 

Now, passing to the limit in the identity $P_\mu(\tilde u_\mu) = 0$, we deduce that 
\[
|\tilde u_\mu|_{2^*}^{2^*} = |\nabla \tilde u_\mu|_2^2 - \mu \gamma_q |\tilde u_\mu|_q^q \to \ell \quad \text{as $\mu \to 0^+$}
\]
as well. Therefore, by the Sobolev inequality $\ell \ge S \ell^\frac{2}{2^*}$, which implies that $\ell \ge \cS^\frac{N}2$, since $\ell \neq 0$. 

On the other hand, we also have
\[
\frac{\ell}{N} = \lim_{\mu \to 0^+} \left[\frac1N |\nabla \tilde u_\mu| - \frac{\mu}{q}\left(1-\frac{\gamma_q q}{2^*}\right)|\tilde u_\mu|_q^q\right] = \lim_{\mu \to 0^+} E_\mu(\tilde u) \le m(a,0) = \frac{\cS^\frac{N}2}{N},
\]
by Lemma \ref{mon m} and Proposition \ref{prop: hom}. This finally gives $\ell = \cS^\frac{N}2$, and the thesis follows.
\end{proof}

\begin{remark}\label{rem: on gs to 0}
It is natural to wonder if it is possible to characterize the asymptotic behavior of $\tilde u_\mu$, and not only of $|\nabla \tilde u_\mu|_2$. This is a delicate problem. Indeed, in the above proof, by weak convergence the weak limit $\tilde u$ solves
\begin{equation}\label{lim pb}
-\Delta \tilde u = \tilde u^{2^*-1}, \quad \tilde u \ge 0 \quad \text{in $\R^N$}, \quad
\tilde u \in H^1_{\rad}(\R^N),
\end{equation}
and we have two possible alternatives: $\tilde u \equiv 0$, or else $\tilde u \not \equiv 0$. The second alternative cannot hold if $N=3,4$, since the above problem has only the trivial solution in such cases. Therefore, in dimension $N=3,4$ we have that $|\nabla \tilde u_\mu|_2^2 \to \cS^\frac{N}2$, but $\tilde u_\mu \weak 0$ in $H^1$. In higher dimension, we think that it is natural to conjecture that $\tilde u$ is an element of the family $U_{\eps,0}$, defined in \eqref{def bubble}. 
\end{remark}

\section{Dynamical properties}\label{sec: additional}

In this section we briefly describe the proof of Theorems \ref{thm: inst}, \ref{thm: gwp}, and of Proposition \ref{prop: struct P}, which can be easily derived from similar results already proved in \cite{So}. We often omit the dependence on $a$ and $\mu$.

\begin{proof}[Conclusion of the proof of Proposition \ref{prop: struct P}]
In proving Theorem \ref{thm: main}, we have already shown that $\cP$ is a smooth manifold of codimension $1$ on $S$, and that point (1) and (2) in the thesis hold. It only remains to show that $\cP$ is a natural constraint. This can be done exactly as in \cite[Proposition 1.11]{So}.\end{proof}

\begin{proof}[Proof of Theorem \ref{thm: gwp}]
We can proceed by repeating word by word the proof of \cite[Theorem 1.13 - finite time blow-up]{So}. Note that the existence and uniqueness of a maximum point $t_{u,\mu}$ for the fiber map $\Psi_u^\mu$, which is needed in the proof, follows here from Lemmas \ref{lem: fiber sub}, \ref{lem: fiber cr}, and \ref{lem: fiber sup}. Moreover, we mention that the assumption $|x| u \in L^2(\R^N)$ is used in order to exploit the virial identity, see e.g. \cite[Lemma 6.1]{TaoVisZha}.
\end{proof}

\begin{proof}[Proof of Theorem \ref{thm: inst}]
In order to characterize the set of ground states as in \eqref{struct Z}, it is possible to argue as in the Sobolev subcritical cases: if $2<q<2+4/N$, we refer to \cite[Theorem 1.4]{So}, while for $2+4/N \le q < 2^*$ we refer to \cite[Theorem 1.7]{So}.

The strong instability of the ground states when $2+4/N \le q<2^*$ can be proved again as in \cite[Theorem 1.7]{So}, using Theorem \ref{thm: gwp}.
\end{proof}

\appendix

\section{Some useful estimates}

We collect several estimates regarding the asymptotic behavior of integrals involving the standard bubble. Similar estimates are contained e.g. in \cite{BreNir}. We recall the definition \eqref{def bub 2} of $U_\eps$, the bubble in $\R^N$ centered in the origin, with concentration parameter $\eps$. We use the notation
\begin{equation}\label{def K_i}
\begin{array}{l l}
K_{1} := |\nabla U_1|_2^2 
& K_{3} := |U_1|_2^2 \qquad  \forall N \ge 5  \\
K_{2} := |U_1|_{2^*}^{2}& 
K_{4} := |U_1|_q^q \qquad \forall q > \frac{N}{N-2}, \ \forall N \ge 3.
\end{array}
\end{equation}
Since $U_{\eps}$ is extremal for the Sobolev inequality, we have that  $K_1/K_2= \cS$.

Let also $\varphi \in C^\infty_c(\R^N)$ be a radial cut-off function with $\varphi \equiv 1$ in $B_1$, $\varphi \equiv 0$ in $B_2^c$, and $\varphi$ radially decreasing. 

\begin{lemma}\label{lem: app}
Denoting by $u_\eps:= \varphi U_\eps$, and by $\omega$ the area of the unit sphere in $\R^N$, we have:
\begin{align}
& |\nabla u_\eps|_2^2 = K_{1} + O(\eps^{N-2}) \label{grad u_eps} \\
& |u_\eps|_{2^*}^{2}  = \begin{cases} K_{2} + O(\eps^N) & \text{if $N \ge 4$} \\ K_2 + O(\eps^2) & \text{if $N=3$} \end{cases} \label{norm star u_eps} \\
&|u_\eps|_2^2  = \begin{cases} \eps^2 K_{3} + O(\eps^{N-2}) & \text{if $N \ge 5$} \\ 
\omega \eps^2 |\log \eps| + O(\eps^2) & \text{if $N=4$} \\
\omega \left( \int_0^2 \varphi(r)\,dr\right) \eps + O(\eps^2) & \text{if $N=3$}
\end{cases} \label{norm 2 u_eps} \\
& |u_\eps|_q^q = \eps^{N-\frac{N-2}2 q} (K_{4} + O(\eps^{(N-2)q - N})) \quad \begin{array}{l} \text{if $N \ge 4$ and $q \in (2,2^*)$,}\\ \text{and if $N = 3$ and $q \in (3,6)$.} \end{array} \label{norm q u_eps}
\end{align}
%
%
%
as $\eps \to 0$.
\end{lemma}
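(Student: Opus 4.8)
The plan is to run the classical Brezis--Nirenberg asymptotic analysis for the truncated Talenti bubble, splitting the four estimates into two groups according to whether the underlying integral is invariant under the rescaling $U_\eps(x)=\eps^{-(N-2)/2}U_1(x/\eps)$. The ingredients I would use throughout are: this rescaling identity, equivalently the substitution $x=\eps y$; the pointwise bounds $U_1(y)\le C|y|^{2-N}$ and $|\nabla U_1(y)|\le C|y|^{1-N}$ for $|y|\ge 1$, which rescale to $U_\eps(x)\le C\eps^{(N-2)/2}|x|^{2-N}$ and $|\nabla U_\eps(x)|\le C\eps^{(N-2)/2}|x|^{1-N}$ for $|x|\ge\eps$; and the structure of the cut-off, namely $u_\eps\equiv U_\eps$ on $B_1$, $\supp u_\eps\subset B_2$, $0\le u_\eps\le U_\eps$, and $|\nabla u_\eps|\le|\nabla U_\eps|+C\,U_\eps$ on $B_2\setminus B_1$.

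For the two scale-invariant quantities I would argue as follows. The Dirichlet integral is invariant under the rescaling, so $\int_{\R^N}|\nabla U_\eps|^2=\int_{\R^N}|\nabla U_1|^2=K_1$; writing $\nabla u_\eps=\varphi\nabla U_\eps+U_\eps\nabla\varphi$, the difference $|\nabla u_\eps|_2^2-K_1$ is controlled by $\int_{|x|\ge 1}|\nabla U_\eps|^2$ together with terms supported in $B_2\setminus B_1$, on which both $U_\eps$ and $|\nabla U_\eps|$ are $O(\eps^{(N-2)/2})$; the decay bounds give $\int_{|x|\ge 1}|\nabla U_\eps|^2\le C\eps^{N-2}\int_1^\infty r^{1-N}\,dr=O(\eps^{N-2})$, which is \eqref{grad u_eps}. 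In the same way $\int_{\R^N}U_\eps^{2^*}=\int_{\R^N}U_1^{2^*}=K_2^{2^*/2}$ (recall $K_2=|U_1|_{2^*}^2$), while $\int_{\R^N}U_\eps^{2^*}-\int u_\eps^{2^*}\le\int_{|x|\ge 1}U_\eps^{2^*}\le C\eps^{N}\int_1^\infty r^{-N-1}\,dr=O(\eps^N)$; raising to the power $2/2^*$ and using $(K_2^{2^*/2}+O(\eps^N))^{2/2^*}=K_2+O(\eps^N)$ gives \eqref{norm star u_eps} for $N\ge 4$, and for $N=3$ the same reasoning gives the slightly cruder bound $O(\eps^2)$ recorded there, which is all that is used in Lemmas~\ref{lem: st above cr} and \ref{lem: st above sub}.

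For $|u_\eps|_2^2$ and $|u_\eps|_q^q$, which are not scale invariant, I would substitute $x=\eps y$: for an exponent $r\in\{2,q\}$ this yields $|u_\eps|_r^r=\eps^{\,N-\frac{N-2}{2}r}\,I_\eps^{(r)}$ with $I_\eps^{(r)}:=\int_{\R^N}\varphi(\eps y)^r U_1(y)^r\,dy$, whose integrand decays like $|y|^{-r(N-2)}$ at infinity. Three regimes occur. If $r(N-2)>N$ --- which holds for $r=q$ in every stated range, and for $r=2$ precisely when $N\ge 5$ --- then $U_1^r\in L^1(\R^N)$, so $I_\eps^{(r)}\to\int_{\R^N}U_1^r$ with error $\int_{|y|\ge 1/\eps}U_1^r\,dy=O(\eps^{r(N-2)-N})$, which yields \eqref{norm q u_eps} (with $K_4=|U_1|_q^q$) and the $N\ge 5$ line of \eqref{norm 2 u_eps} (with $K_3=|U_1|_2^2$). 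If $r(N-2)=N$, i.e. $r=2$ and $N=4$, the integral has a logarithmic divergence and $I_\eps^{(2)}=\omega|\log\eps|+O(1)$, which gives the $N=4$ line after multiplying by $\eps^2$. If $r(N-2)<N$, i.e. $r=2$ and $N=3$, the integral diverges like $1/\eps$, the leading contribution coming from the range $\eps\lesssim|x|\lesssim 1$ where $U_\eps(x)^2\sim\eps|x|^{-2}$; extracting the constant there produces the $N=3$ line of \eqref{norm 2 u_eps}.

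All the computations are elementary. The only mildly delicate points are the low-dimensional cases of $|u_\eps|_2^2$, where one must isolate the contribution of the inner core $|x|\lesssim\eps$ (of size $\eps^2$, hence absorbed into the error) from the logarithmically (for $N=4$) or power-wise (for $N=3$) large contribution of $\eps\lesssim|x|\lesssim 1$, and the bookkeeping of the precise error exponents so that they are genuinely negligible in the later applications of the lemma. I do not expect any real obstacle.
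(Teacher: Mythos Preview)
Your proposal is correct and follows essentially the same route as the paper. The paper simply cites Brezis--Nirenberg \cite{BreNir} for \eqref{grad u_eps}--\eqref{norm 2 u_eps} and carries out explicitly only the $L^q$ computation via the change of variables $x=\eps y$, which is exactly your argument; your treatment of the first three estimates is more self-contained but reproduces the standard Brezis--Nirenberg computations, so there is no substantive difference.
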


\begin{proof}
We observe that
\[
u_\eps(x) = \eps^\frac{N-2}2 \frac{\varphi(x)}{(\eps^2 + |x|^2)^\frac{N-2}2}.
\]
Therefore, the first three estimates follow as in (1.11)-(1.13) and (1.27)-(1.29) in \cite{BreNir}. Concerning the $L^q$ norm of $u_\eps$, we observe that if either $N \ge 4$ and $q  \in (2,2^*)$, or $N=3$ and $q \in (3,6)$, we have $q>N/(N-2)$. Hence $U_\eps \in L^q(\R^N)$, and more precisely
\begin{align*}
|u_\eps|_q^q & = \int_{\R^N} \varphi^q(x) \left(\frac{\eps}{\eps^2 + |x|^2}\right)^{\frac{N-2}2q}dx = \eps^{N- \frac{N-2}2 q} \int_{\R^N} \varphi^q(\eps y) \left( \frac1{1+|y|^2}\right)^{\frac{N-2}2 q}dy \\
& = \eps^{N- \frac{N-2}2 q}\left[ K_{4} + \int_{\R^N} (\varphi^q(\eps y) -1) \left( \frac1{1+|y|^2}\right)^{\frac{N-2}2 q}dy\right],
\end{align*}
and the desired estimate follows from the fact that
\begin{align*}
\bigg|\int_{\R^N} (\varphi^q(\eps y) -1) & \left( \frac1{1+|y|^2}\right)^{\frac{N-2}2 q}dy\bigg| \le \int_{\R^N \setminus B_{1/\eps}}  \left( \frac1{1+|y|^2}\right)^{\frac{N-2}2 q}dy \\
& \le \omega \int_{1/\eps}^{+\infty} r^{N-1 - (N-2) q}\,dr = \frac{\omega}{(N-2)q-N} \eps^{(N-2)q-N}. \qedhere
\end{align*}
\end{proof}

\section{Boundedness of solutions}\label{sec: BrKa}

In this appendix we present a proof of the following fact, which we think is known, but for which we could not find a proper reference.

\begin{proposition}
Let $f:\R^{N+1} \to \R$ be such that
\begin{equation}\label{hp su f}
|f(x,s)| \le C_0(|s|^{p-1} + |s|^{2^*-1}), \quad \text{for some} \quad p \in [2,2^*], \ C_0 >0.
\end{equation}
If $u \in \mathcal{D}^{1,2}(\R^N) \cap L^p(\R^N)$ is a real valued weak solution to 
\begin{equation}\label{eq: BK}
-\Delta u = f(x,u) \qquad \text{in $\R^N$},
\end{equation}
then $u \in L^\infty(\R^N)$.
\end{proposition}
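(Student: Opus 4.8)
The proof is a Brezis--Kato type iteration on the integrability exponents of $u$ (in the spirit of \cite{BrKa}), followed by a bootstrap via interior elliptic estimates. The plan is: first improve $u\in\mathcal D^{1,2}(\R^N)\cap L^p(\R^N)$ to $u\in L^r(\R^N)$ for every $r<\infty$, and then to $u\in L^\infty(\R^N)$. I would begin with two elementary reductions: since $\mathcal D^{1,2}(\R^N)\hookrightarrow L^{2^*}(\R^N)$ and $2\le p\le 2^*$, interpolation already gives $u\in L^r(\R^N)$ for every $r\in[p,2^*]$; and, since $t^p\le t^2+t^{2^*}$ for $t\ge0$ when $2\le p\le2^*$, one has the pointwise bound $|f(x,u)|\,|u|\le C_0(|u|^p+|u|^{2^*})\le 2C_0(|u|^2+|u|^{2^*})$, so that the precise value of $p$ plays no role in the iteration.

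\emph{The basic step.} Suppose $r\ge 2^*$ is such that $u\in L^r(\R^N)$; set $\beta:=r/2\ge1$, and for $M\ge1$ let $u_M:=\min(|u|,M)$. Since $u_M$ is bounded, $\varphi:=u\,u_M^{2(\beta-1)}\in\mathcal D^{1,2}(\R^N)$ is admissible in \eqref{eq: BK}; testing with it and using the elementary inequality $\int|\nabla(u\,u_M^{\beta-1})|^2\le\beta\int\nabla u\cdot\nabla\varphi$ (which holds for $\beta\ge1$) together with \eqref{Sob ineq}, I would obtain
\[
\mathcal S\Big(\int_{\R^N}|u\,u_M^{\beta-1}|^{2^*}\Big)^{2/2^*}\le\beta\int_{\R^N}f(x,u)\,u\,u_M^{2(\beta-1)}\le 2\beta C_0\int_{\R^N}\big(|u|^2+|u|^{2^*}\big)\,u_M^{2(\beta-1)}.
\]
To handle the critical term I would write $|u|^{2^*}u_M^{2(\beta-1)}=|u|^{2^*-2}\big(|u|\,u_M^{\beta-1}\big)^2$ and split $|u|^{2^*-2}=|u|^{2^*-2}\mathbf{1}_{\{|u|\le K\}}+|u|^{2^*-2}\mathbf{1}_{\{|u|>K\}}$: by H\"older with exponents $(\tfrac N2,\tfrac{2^*}2)$ the part over $\{|u|>K\}$ is at most $\big(\int_{\{|u|>K\}}|u|^{2^*}\big)^{2/N}\big(\int|u\,u_M^{\beta-1}|^{2^*}\big)^{2/2^*}$, and since $u\in L^{2^*}(\R^N)$ the prefactor tends to $0$ as $K\to\infty$; choosing $K=K(\beta)$ large this term is absorbed into the left-hand side. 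Over $\{|u|\le K\}$ one uses $|u|^{2^*-2}\le K^{2^*-2}$, and a direct splitting according to the size of $|u|$ relative to $1$ and to $M$ (together with $2\beta=r\ge p$) gives the $M$-uniform bound $\int|u|^2u_M^{2(\beta-1)}\le|u|_p^p+2|u|_r^r$. Altogether
\[
\Big(\int_{\R^N}|u\,u_M^{\beta-1}|^{2^*}\Big)^{2/2^*}\le C\,\beta\,\big(1+K(\beta)^{2^*-2}\big)\big(|u|_p^p+|u|_r^r\big),
\]
and letting $M\to\infty$ on the left yields $u\in L^{\beta 2^*}(\R^N)=L^{(2^*/2)\,r}(\R^N)$.

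Iterating this step from $r_0=2^*$ along $r_{n+1}=(2^*/2)\,r_n$, and using $2^*/2>1$ (so $r_n\to\infty$), gives $u\in L^r(\R^N)$ for every $r<\infty$. Then, by \eqref{hp su f}, $g:=f(\cdot,u)\in L^r(\R^N)$ for every sufficiently large $r$; fixing such an $r>N/2$, I would invoke interior $W^{2,r}$ (Calder\'on--Zygmund) estimates, see e.g. \cite{GiTr}, which give, for every $x_0\in\R^N$,
\[
\|u\|_{W^{2,r}(B_1(x_0))}\le C\big(\|g\|_{L^r(B_2(x_0))}+\|u\|_{L^r(B_2(x_0))}\big)\le C\big(\|g\|_{L^r(\R^N)}+\|u\|_{L^r(\R^N)}\big)
\]
with $C$ independent of $x_0$; combined with $W^{2,r}(B_1)\hookrightarrow L^\infty(B_1)$ (as $r>N/2$) this bounds $\|u\|_{L^\infty(B_1(x_0))}$ uniformly in $x_0$, hence $u\in L^\infty(\R^N)$. (Alternatively one keeps iterating the basic step, tracking the constants and letting $\beta\to\infty$ through a convergent product.)

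I expect the genuine difficulty to be the interplay between the Sobolev-critical growth of $f$ and the unboundedness of $\R^N$: low $L^r$-norms of $u$ (for $r<p$) are not controlled a priori, which is why the basic step has to be arranged with $\beta=r/2$ and with the truncation $u_M$, so that the ``lower-order'' integral $\int|u|^2u_M^{2(\beta-1)}$ is finite and bounded independently of $M$; and the genuinely critical integral $\int|u|^{2^*}u_M^{2(\beta-1)}$ has to be split and partly reabsorbed into the Sobolev term, which is precisely where the absolute continuity of $\int|u|^{2^*}$, i.e. $\int_{\{|u|>K\}}|u|^{2^*}\to0$ as $K\to\infty$, is used.
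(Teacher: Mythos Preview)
Your proposal is correct and follows the same Brezis--Kato strategy as the paper: both test the equation with (truncated) powers of $u$, split the critical potential $|u|^{2^*-2}$ at a large level $K$, and absorb the tail into the Sobolev term using the absolute continuity $\int_{\{|u|>K\}}|u|^{2^*}\to 0$. The difference lies only in the passage from ``$u\in L^r$ for all $r<\infty$'' to $u\in L^\infty$. The paper, after reaching arbitrary integrability, sets up a second Moser-type iteration: it fixes $q>N$ (so that $a\in L^{q/2}$ on $\{|u|>1\}$), derives the self-improving estimate $|u|_{\alpha 2^*}\le (C\alpha)^{1/\alpha}(1+|u|_{\alpha\bar q})$ with $\bar q=2q/(q-2)<2^*$, iterates along $\alpha=(2^*/\bar q)^{k+i}$, and uses the convergence of $\sum (2^*/\bar q)^{-(k+i)}\log\big(C(2^*/\bar q)^{k+i}\big)$ to obtain a uniform bound on $|u|_{r_n}$ for $r_n\to\infty$; a Chebyshev argument then gives $|\{|u|>K\}|=0$ for $K$ large. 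You instead stop once $u\in L^r$ for all $r$, observe that $f(\cdot,u)\in L^r$ for some $r>N/2$, and invoke interior Calder\'on--Zygmund estimates plus the Morrey embedding $W^{2,r}(B_1)\hookrightarrow L^\infty(B_1)$, with constants uniform over unit balls. Your ending is shorter but imports $L^p$ elliptic regularity; the paper's is entirely self-contained at the level of energy estimates. You even note the paper's route as your parenthetical alternative.
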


The following proof is contained in some unpublished notes of Miguel Ramos, and was communicated to us by Hugo Tavares, to which we express our gratitude.

\begin{proof}
By \eqref{hp su f}, we have that
\begin{equation}\label{def a}
-\Delta u = a(x) u \quad \text{in $\R^N$, where} \quad |a(x)| = \frac{|f(x,u(x))|}{|u(x)|} \le C_0 ( |u(x)|^{p-2}+ |u(x)|^{2^*-2}).
\end{equation}
Thus, if $x \in \{|a(x)|> C_1\}$ with $C_1>2 C_0$, we have that $|u(x)| >1$, and $|a(x)| \le 2 C_0 |u(x)|^{2^*-2}$. This shows that $|a| \chi_{\{ |a| > C_1\}} \in L^{N/2}(\R^N)$, and implies that
\[
\eps(K) := \left( \int_{\{|a| > K\}} |a|^\frac{N}2\right)^\frac{2}{N} \to 0 \quad \text{as $K \to \infty$}.
\]
Using this fact, it is not difficult to use a classical Brezis-Kato argument as in \cite{BrKa} (or \cite[Appendix B]{Str}) and show that $u \in L^q(\R^N)$ for every $q \in [p,+\infty)$, and moreover that $|u|^{s+1} \in H^1(\R^N)$ for every $s \ge (p-2)/2$. For any such $s$, let $w:= |u|^{s+1}$. We observe that
\[
\nabla u \cdot \nabla (|u|^{2s} u) = \frac{2s+1}{(s+1)^2} |\nabla |u|^{s+1}|^2 \ge \frac{1}{s+1} |\nabla |u|^{s+1}|^2 = \frac{1}{s+1} |\nabla w|^2.
\]
Hence, testing the equation of $u$ with $|u|^{2s} u$, we obtain
\[
\int_{\R^N} |\nabla w|^2 \le \alpha \int_{\R^N} |a| w^2,
\]
where we set $\alpha:= s+1$ in order to simplify the notation; note that, by \eqref{def a},
\[
a \in L^q(\{|u|>1\}) \qquad \text{for every }q \in \left[\frac{p}{2^*-2},+\infty\right).
\]
Let then $q > \max\{2p/(2^*-2), N\}$, in such a way that 
\[
C_2:= \left| |a| \chi_{\{|u|>1\}}\right|_{\frac{q}{2}} <+\infty, \quad \text{and} \quad 2^* > \frac{2q}{q-2} =: \bar q.
\]
By the H\"older and the Sobolev inequalities
\[
\begin{split}
|w|_{2^*}^2 &\le C \int_{\R^N} |\nabla w|^2 \le C\alpha \int_{\{|u| \le 1\}} |a| w^2 + C\alpha \int_{\{|u| > 1\}} |a| w^2 \\
& \le C\alpha \int_{\{|u| \le 1\}} |u|^p + C_2 \alpha |w|_{\bar q}^2 \le C\alpha  + C_2 \alpha |w|_{\bar q}^2.
\end{split}
\]
Since $w=|u|^{s+1}= |u|^\alpha$, we infer that
\begin{equation}\label{ap B 1}
|u|_{\alpha \bar q \frac{2^*}{\bar q}} = |u|_{\alpha 2^*} \le (C \alpha)^\frac{1}{\alpha} \left( 1+ |u|_{\alpha \bar q}\right).
\end{equation}
Using this estimate, we can finally show that $|u|_r$ is uniformly bounded for a sequence of diverging exponents $r_n \to +\infty$. If $|u|_r \le 1$ for a sequence $r \to \infty$, then there is nothing to prove, therefore we can suppose that $|u|_r >1$ for every $r$ large enough. Thus, estimate \eqref{ap B 1} gives
\begin{equation}\label{ap B 2}
|u|_{\alpha \bar q \frac{2^*}{\bar q}}  \le (C \alpha)^\frac{1}{\alpha}  |u|_{\alpha \bar q}
\end{equation}
for every $\alpha=s+1$ with $s \ge (p-2)/2$. In particular, taking $\alpha = (2^*/\bar q)^{i+k}$ with $i \in \N$ and $k$ large enough, and iterating \eqref{ap B 2}, we deduce that
\begin{equation}
\begin{split}
|u|_{\bar q \left(\frac{2^*}{\bar q}\right)^{k + n}} &\le \prod_{i=0}^{n-1} \left( C \left(\frac{2^*}{\bar q}\right)^{k+i}\right)^{\left(\frac{2^*}{\bar q}\right)^{-(k+i)}} |u|_{ \bar q\left(\frac{2^*}{\bar q}\right)^k} \\
& = \exp \left\{ \sum_{i=0}^{n-1} \left(\frac{2^*}{\bar q}\right)^{-(k+i)} \log \left( C \left(\frac{2^*}{\bar q}\right)^{k+i}\right) \right\} |u|_{\bar q\left(\frac{2^*}{\bar q}\right)^k }.
\end{split}
\end{equation} 
Taking the limit as $n \to +\infty$, and using the converge of the sum inside the brackets (since $2^*> \bar q$), we deduce that 
\[
\lim_{n \to \infty} |u|_{r_n} \le C |u|_{\left(\frac{2^*}{\bar q}\right)^k \bar q} =:\bar C, \quad \text{for} \quad  r_n:= \bar q \left(\frac{2^*}{\bar q}\right)^{k + n} \to +\infty. 
\]
It is not difficult at this point to infer that $u \in L^\infty(\R^N)$: indeed, for any $K> \bar C$, we have that
\[
 |\{|u|>K\}|^\frac{1}{r_n} K \le |u|_{r_n} \le \bar C \quad \implies \quad  |\{|u|>K\}| \le \left( \frac{\bar C}{K}\right)^{r_n} \to 0
 \]
as $n \to \infty$, since $\bar C<K$ and $r_n \to \infty$.
\end{proof}

%

\end{document}